\newtheorem{thm}{Theorem}[section]
\newtheorem{prop}[thm]{Proposition}
\newtheorem{lem}[thm]{Lemma}
\theoremstyle{definition}
\newtheorem{definition}[thm]{Definition}
\newtheorem{assumption}[thm]{Assumption}
\theoremstyle{remark}
\newcommand{\bfK}{{\bf K}}
\newcommand{\bfy}{{\bf y}}
\newtheorem{remark}[thm]{Remark}
\numberwithin{equation}{section}
\begin{document}
	
	%%
	%% The title of the paper goes here.  Edit to your title.
	%%
	
	\title{
		Control and optimization   for Neural Partial Differential Equations in Supervised Learning}
	
	%%
	%% Now edit the following to give your name and address:
	%% 
	\author[A. Bensoussan]{Alain Bensoussan }
	\address{Naveen Jindal School of Management, The University of Texas at Dallas, Richardson, TX 75080, USA}
	\email{alain.bensoussan@utdallas.edu} 
	\thanks{A. B is  funded in part by  the  NSF Grant DMS-2204795.}

	\author[M.-B. Tran]{Minh-Binh Tran}
	\address{Department of Mathematics, Texas A\&M University, College Station, TX 77843, USA}
	\email{minhbinh@tamu.edu} 
	\thanks{B. W and M.-B. T are  funded in part by  the  NSF Grants DMS-2204795, DMS-2305523,    Humboldt Fellowship,   NSF CAREER  DMS-2303146, DMS-2306379.}
	
	\author[B. Wang]{Bangjie Wang}
	\address{Department of Mathematics, Texas A\&M University, College Station, TX 77843, USA}
	\email{bangjiewang@tamu.edu}

	\begin{abstract}
		
		Although there is a substantial body of literature on control and optimization problems for parabolic and hyperbolic systems, the specific problem of controlling and optimizing the \emph{coefficients} of the associated operators within such systems has not yet been thoroughly explored. In this work, we aim to initiate a line of research in control theory focused on optimizing and controlling the coefficients of these operators, a problem that naturally arises in the context of neural networks and supervised learning.
		
		In supervised learning, the primary objective is to transport initial data toward target data through the layers of a neural network. We propose a novel perspective: neural networks can be interpreted as partial differential equations (PDEs). From this viewpoint, the control problem traditionally studied in the context of ordinary differential equations (ODEs) is reformulated as a control problem for PDEs, specifically targeting the optimization and control of coefficients in parabolic and hyperbolic operators. To the best of our knowledge, this specific problem has not yet been systematically addressed in the control theory of PDEs.
		
		To this end, we propose a dual system formulation for the control and optimization problem associated with parabolic PDEs, laying the groundwork for the development of efficient numerical schemes in future research. We also provide a theoretical proof showing that the control and optimization problem for parabolic PDEs admits minimizers. Finally, we investigate the control problem associated with hyperbolic PDEs and prove the existence of solutions for a corresponding approximated control problem.
		
	\end{abstract}
	
	\maketitle
	\tableofcontents
	\section{Introduction}
	Neural networks are a powerful tool in machine learning, designed to mimic the structure of the human brain to process patterns and data. These networks consist of layers of interconnected neurons that collaborate to handle complex information \cite{chizat2020implicit,goodfellow2016deep,hochreiter1997long,krizhevsky2012imagenet,lecun2015deep,rumelhart1986learning,vaswani2017attention}. With their ability to manage intricate data, neural networks are widely used in numerous fields, such as finance, robotics, healthcare, and intelligent decision-making systems.

	Deep neural networks process data through multiple layers, each applying nonlinear functions and affine transformations. In convolutional neural networks (CNNs), these transformations are implemented using convolution operators with compact filters~\cite{lecun1995convolutional}. CNNs excel in large-scale tasks involving discrete signals, such as images, audio, and video~\cite{krizhevsky2012imagenet, lecun1995convolutional, lecun2010convolutional}. In supervised learning, filters and model parameters (weights) are learned from training data, leveraging local spatial relationships to enhance computational efficiency.
	
	However, CNNs face challenges in architecture design, involving decisions about network depth, width, and layer connections. Deeper networks are often preferred for better generalization, but designing them requires careful selection of transformations and nonlinearities. Treating these as hyperparameters and optimizing them jointly with weights is effective but computationally costly.
	
	To address these challenges, Ruthotto and Haber~\cite{ruthotto2020deep} propose interpreting CNN architectures as discretizations of partial differential equations (PDEs), where each layer corresponds to a time step in a PDE solver. This perspective offers a principled and mathematically grounded framework for network design.
	
	In addition to architectural complexity, CNNs face issues of interpretability and robustness, especially in safety-critical applications such as autonomous driving. The PDE-based framework facilitates a better understanding of network behavior and supports the design of stable architectures that are resilient to small perturbations in the input.
	
	This approach has shown significant potential in improving CNN design and addressing the aforementioned challenges. For instance, recent studies~\cite{pei2017deepxplore} have demonstrated that deep network predictions can be highly sensitive to minor changes in input images. These results underscore the importance of designing stable models—i.e., networks whose outputs remain robust under small input variations. This stability aligns with fundamental principles of PDE analysis, reinforcing the value of the PDE-inspired method  in CNN architecture design. The approach has inspired parabolic network designs that smooth and denoise images, as well as networks based on hyperbolic equations and semi-implicit architectures \cite{haber2019imexnet, lensink2022fully}.

	The task of supervised learning is to map initial data to target data, which can be viewed as a control problem. In a lecture at Collège de France~\cite{lions}, P.L. Lions proposed extending supervised learning to continuous time using partial differential equations (PDEs).
	
	In this work, we bring P.-L.~Lions' idea to fruition by generalizing the approach of Ruthotto and Haber~\cite{ruthotto2020deep}, which interprets convolutional neural networks (CNNs) as partial differential equations (PDEs), and by placing this interpretation on a more rigorous mathematical foundation. We establish a novel connection between supervised learning---framed as mapping initial data to target data---and the control and optimization of coefficients in associated parabolic and hyperbolic PDEs, a class of problems largely unexplored in the existing literature~\cite{lions1971optimal}.
	
	Standard neural networks benefit from several well-established universal approximation theorems, making supervised learning feasible from a control-theoretic perspective~\cite{hernandez2024deep}. However, for CNNs designed through the lens of PDEs, such theoretical foundations are currently lacking despite their many practical advantages. The goal of this work is to address this gap and contribute to the development of a corresponding theoretical framework. In particular, second-order partial differential operators emerge naturally from this formulation, rendering the associated control problem  tractable.

	We show that our optimization and control problem for parabolic PDEs admits minimizers under suitable conditions (see Section~\ref{Section:Full}).	In the context of mathematical optimization, optimal control, and game theory, dual systems offer alternative formulations that can provide deeper insights and simplify computation. We introduce a dual system corresponding to our control and optimization problem for parabolic PDEs, which appears to be novel (see Section~\ref{Section:Dual}).
	Finally, in Section~\ref{Section:Hyper}, we demonstrate the existence of control parameters for an approximate control problem involving hyperbolic equations.

	{\it The current work aims to establish the first basic theory for the control and optimization PDE problem arising from the theory of CNNs. }
	
	We finally remark that Neural Ordinary Differential Equations (Neural ODEs) represent a class of deep learning models that use differential equations to describe the dynamics of neural networks. By employing an ordinary differential equation parameterized by a neural network, Neural ODEs can capture the continuous evolution of hidden states within the network~\cite{chen2018neural1, dupont2019augmented, grathwohl2019ffjord, kidger2020neural, massaroli2020dissecting, rackauckas2020universal, zhang2020anodev2}.  The problem of controlling Neural ODEs has also been explored recently~\cite{doi:10.1137/21M1411433}.

	{\bf Acknowledgment:} The authors would like to thank Prof. P.-L.Lions, Prof. E. Pozzoli, Prof. E. Trelat, Prof. E. Zuazua and Mr. 
	A. A. Lopez for fruitful discussions on the work.
	\section{Neural Partial Differential Equations}\label{Subsec:gAIControl}
	For any given dataset \((X_0, X_T)\), where \(X_0 \in \mathbb{R}^{N}\) represents the input and \(X_T \in \mathbb{R}^{N}\) the output, \emph{Supervised Learning (SL)} aims to learn a mapping from \(X_0\) to \(X_T\).
	
	We consider an extended version of the CNN layer used in Ruthotto and Haber~\cite{ruthotto2020deep}:
	
	\begin{equation}\label{RHLayer}
		\mathbf{F}(\Theta, X) \ = \ \sum_{i=1}^P \mathbf{K}_{i,2}(\Theta_{i,3})\, \sigma_{i}\left( \mathcal{N}(\mathbf{K}_{i,1}(\Theta_{i,1})X, \Theta_{i,2}) \right),
	\end{equation}	
	where we generalize the setting of \cite[Equation (1)]{ruthotto2020deep} from \(P = 1\) to \(P = 1\) and \(P = 2\). Since this work aims to lay the foundation for the theory of PDE-based CNNs, we consider such general setting. The special case \( P = 1 \), as studied in~\cite{ruthotto2020deep}, is also covered by our theorems. The parameters are defined as follows:
	
	\begin{itemize}
		\item We denote $\mathbf{K}=(\mathbf{K}_1,\cdots,\mathbf{K}_P)$. The matrices \(\mathbf{K}_i\) are partitioned into two components:  \(\mathbf{K}_i=( \mathbf{K}_{i,1}\), \(\mathbf{K}_{i,2})\), where the matrices \(\mathbf{K}_{i,1} \in \mathbb{R}^{N \times N}\) and \(\mathbf{K}_{i,2} \in \mathbb{R}^{N \times N}\) represent convolution operators. The parameter \(N\) denotes the width of the layer, i.e., the number of input, intermediate, and output features.
		
		\item The activation functions \(\sigma_i\) are vector-valued functions \(\sigma_i : \mathbb{R}^N \to \mathbb{R}^N\), defined as follows. Let \(\tilde{\sigma}_i : \mathbb{R} \to \mathbb{R}\) be a real-valued function. Then, we define
		\[
		\sigma_i(x) = \begin{pmatrix}
			\tilde{\sigma}_i(x_1) \\
			\tilde{\sigma}_i(x_2) \\
			\vdots \\
			\tilde{\sigma}_i(x_N)
		\end{pmatrix}.
		\]

		\item We denote $\Theta=(\Theta_1,\cdots,\Theta_P)$. The parameter vectors \(\Theta_i\) are partitioned into three components:  \(\Theta_i=( \Theta_{i,1}\), \(\Theta_{i,2},\Theta_{i,3})\). The choices of \(\Theta_i\) will be explained clearer below in Subsection \ref{Subsect:Para}.
		\item  \(\mathcal{N}\) denotes the normalization layer. Following \cite{ruthotto2020deep}, we choose \(\mathcal{N}\) to be the identity operator in this paper, i.e.,
		\[
		\mathcal{N}(\mathbf{K}_{i,1}(\Theta_{i,1})X, \Theta_{i,2}) = \mathbf{K}_{i,1}(\Theta_{i,1})X.
		\]
		Therefore, the parameters \(\Theta_{i,2}\) plays no role and can be omitted.
		.
		
	\end{itemize}
	
	\begin{remark}
		\leavevmode
		\begin{itemize}

			\item The inputs \(X_0\) and outputs \(X_T\) can also be matrices in \(\mathbb{R}^{N \times S}\), where \(S\) denotes the number of samples. In this case, $\mathbf{F}(\Theta,\cdot)$ acts on $X$ componentwise. In this work, we set \(S = 1\) for simplicity.
			
			\item One could consider different dimensions \(N_1\), \(N_2\), and \(N_3\) for the input, intermediate, and output features, respectively. However, for clarity, we assume \(N_1 = N_2 = N_3 = N\).
			
			% \item The activation functions \(\sigma_i\) can be viewed as vector-valued functions \(\sigma_i: \mathbb{R}^N \to \mathbb{R}^N\), as discussed in \cite{doi:10.1137/21M1411433}. Here, we adopt the same convention as in \cite{ruthotto2020deep}.
		\end{itemize}
	\end{remark}
	
	%	 A neural network can be expressed as in~\cite{doi:10.1137/21M1411433,dubey2022activation}:
	%	\begin{equation}\label{Network}
		%		\sum_{i=1}^P W_i {\boldsymbol{\sigma}_i}(A_i X + b_i),
		%	\end{equation}
	%	where \({\boldsymbol{\sigma}_i} : \mathbb{R}^N \to \mathbb{R}^N\) is the activation function, defined componentwise as
	%	\[
	%	{\boldsymbol{\sigma}_i}(x_1,\dots,x_N) = (\sigma_i(x_1), \dots, \sigma_i(x_N)),
	%	\]
	%	for  fixed functions \(\sigma_i: \mathbb{R} \to \mathbb{R}\). Here, \(A_i, W_i \in \mathbb{R}^{N \times N}\) and \(b_i \in \mathbb{R}^N\).
	%	
	%	
	%	A Neural Differential Equation~\cite{chen2018neural1,doi:10.1137/21M1411433,li2018maximum} can be written as
	%	\begin{equation}\label{NeuralDE}
		%		\dot{X}(t) = \sum_{i=1}^P W_i(t)\boldsymbol{\sigma}_i(A_i(t) X(t) + b_i(t)).
		%	\end{equation}

	Given an input feature \(X_0\), a ResNet unit~\cite{hinton2012deep, lecun2015deep} with \(L\) layers produces a transformed output \(X_L = X_T\) according to the following update rule:
	\begin{equation}\label{RHLayer2}
		\frac{X_{j+1} - X_j}{\Delta t} = \mathbf{F}(\Theta, X_j), \quad j = 0, \dots, L - 1,
	\end{equation}
	where \(\Delta t > 0\) is a constant step size.
	
	Equation~\eqref{RHLayer2} has the following continuous counterpart:
	\begin{equation}\label{NeuralDE}
		\dot{X} = \mathbf{F}(\Theta, X).
	\end{equation}
	
	The supervised learning (SL) task can now be reformulated as a control problem: given \(\{{\sigma}_i\}_{i=1}^P\), \(X_0, X_T \in \mathbb{R}^N\), and a fixed terminal time \(T > 0\), find parameters \(\mathbf{K}\) and \(\Theta\) such that
	\[
	X(0) = X_0, \quad X(T) = X_T,
	\]
	where \(X\) solves Equation~\eqref{NeuralDE}.
	
	\subsection{From Neural Networks to Parabolic Partial Differential Equations}\label{Subsect:Para}
	
	In \cite{ruthotto2020deep}, Ruthotto and Haber proposed interpreting Neural Networks as discrete parabolic operators, leading to a way of using PDEs to model Neural Networks. However, the coefficients of these parabolic operators are constant. Below, we generalize this idea by allowing the coefficients to depend on both time \( t \) and space \( x \) (distributed control), and we place it within a more rigorous mathematical framework, thereby bringing Lions' idea~\cite{lions} to fruition.

	For any vector $\bfy \in \mathbb{R}^N$, we can associate $\bfy$ with a function $y: [-L,L]\to\mathbb{R}$ for some constant $L>0$, such that  
	\begin{equation}\label{Function}
		\bfy = [y(z_1),\ldots,y(z_N)]^{\top}, \quad \text{where} \quad z_i = i h, \quad h = \frac{L}{N'}, \quad i=-N',\ldots,N'.
	\end{equation} 
	where we assume $N=2N'+1$ with $N'\in\mathbb{N}$.

	Next, given parameters $\beta,\alpha,\theta\in\mathbb{R}^{N\times N}$, we define:

	\[
	\bfK_\Delta = \frac{\beta}{h^2} 
	\begin{bmatrix} 
		-2 & 1 & 0 & \cdots & 0 \\ 
		1 & -2 & 1 & \cdots & 0 \\ 
		0 & 1 & -2 & \cdots & 0 \\ 
		\vdots & \vdots & \vdots & \ddots & 1 \\ 
		0 & 0 & 0 & 1 & -2 
	\end{bmatrix} \ - \ \frac{\alpha}{2h} 
	\begin{bmatrix} 
		0 & 1 & 0 & \cdots & 0 \\ 
		-1 & 0 & 1 & \cdots & 0 \\ 
		0 & -1 & 0 & \cdots & 0 \\ 
		\vdots & \vdots & \vdots & \ddots & 1 \\ 
		0 & 0 & 0 & -1 & 0 
	\end{bmatrix} \ - \ \theta\mathrm{Id}_{\mathbb{R}^N\times\mathbb{R}^N},
	\]
	where $\mathrm{Id}_{\mathbb{R}^{N\times N}}$ is the identity matrix in $\mathbb{R}^{N\times N}$, and
	\[
	\beta= \begin{bmatrix}
		\beta(z_1,t) & \cdots & 0 \\
		\vdots &\ddots & \vdots \\
		0 &\cdots &\beta(z_N,t)
	\end{bmatrix},\alpha= \begin{bmatrix}
		\alpha(z_1,t) & \cdots & 0 \\
		\vdots &\ddots & \vdots \\
		0 &\cdots &\alpha(z_N,t)
	\end{bmatrix}, \theta= \begin{bmatrix}
		\theta(z_1,t) & \cdots & 0 \\
		\vdots &\ddots & \vdots \\
		0 &\cdots &\theta(z_N,t).
	\end{bmatrix}
	\]	
	For sufficiently small $h$, we obtain for $i=1,2,\ldots,N$:
	\begin{equation}
		\begin{aligned}\label{discrete1}
			\bfK_\Delta\bfy_i &= \frac{\beta(z_i,t)(y(z_{i-1}) - 2y(z_i) + y(z_{i+1}))}{h^2} - \frac{\alpha(z_i,t)(y(z_{i+1}) - y(z_{i-1}))}{2h} -\theta(z_i,t) y(z_i)   \
			,
	\end{aligned}\end{equation}
	where $y(z_{-1})=y(z_{N+1})=0$ and $\bfK_\Delta\bfy_i $ is the $i$-th component of the vector $\bfK_\Delta\bfy$, yielding:
	\begin{equation}
		\begin{aligned}\label{discrete2}
			\bfK_\Delta\bfy_i    \approx \beta(z_i,t)  \partial_{x}^2 y(z_i) - \alpha(z_i,t)\partial_x y(z_i) - \theta(z_i,t) y(z_i)  \text{ as } h\to 0.
	\end{aligned}\end{equation}

	We use \(\mathbf{K}_\Delta\) in our CNN by choosing \(P = 2\). We set \(\Theta_{1,3} = (\alpha, \beta, \theta)\). In Equation~\eqref{NeuralDE}, we define \(\mathbf{K}_{1,2}(\Theta_{1,3})\) as the matrix \(\mathbf{K}_\Delta\) and choose \({\sigma}_1\) to be the identity activation function. We also set \(\mathbf{K}_{1,1}(\Theta_{1,1}) = \mathrm{Id}_{\mathbb{R}^{N \times N}}\) and omit the role of \(\Theta_{1,1}\).
	
	We assume that both \(\mathbf{K}_{2,1}(\Theta_{2,1})\) and \(\mathbf{K}_{2,2}(\Theta_{2,3})\) are diagonal matrices. 
	Notably, we can even set \({\sigma}_2 = 0\), as done in \cite{ruthotto2020deep} and hence $P=1$. Substituting into Equation~\eqref{NeuralDE}, we obtain the following:
	
	\begin{equation}\label{NeuralDE2}
		\dot{X}(t) = \mathbf{K}_\Delta X(t) + \mathbf{K}_{2,2}(\Theta_{2,3})\, {\sigma}_2(\mathbf{K}_{2,1}(\Theta_{2,1}) X(t)).
	\end{equation}

	Similar to \eqref{Function}, we associate \(X\) with a function \(m\), which depends on time \(t \in \mathbb{R}_+\) and space \(x \in \mathbb{R}\). Then, from \eqref{discrete2}, we approximate:
	
	\[
	\mathbf{K}_\Delta X_i(t) \approx \beta(z_i,t)\partial_x^2 m(z_i,t) -\alpha(z_i,t)\partial_x m(z_i,t)-\theta(z_i,t) m(z_i,t) .
	\]
	
	The Neural Differential Equation \eqref{NeuralDE2} can now be approximated by the following PDE, as $h\to 0$
	
	\begin{equation}\label{NeuralPDE}
		\partial_t m(x,t) = \beta(x,t)\, \partial_x^2 m(x,t) - \alpha(x,t)\, \partial_x m(x,t) -\theta(x,t) m(x,t)  + \sigma(\gamma(x,t)\, m(x,t) + \delta(x,t)),
	\end{equation}
	where the term \(\sigma(\gamma m + \delta)\) represents the nonlinearity arising from \(\mathbf{K}_{2,2}(\Theta_{2,3})\, {\sigma}_2(\mathbf{K}_{2,1}(\Theta_{2,1}) X)\). 
	% We assume that, in the limit as \(h \to 0\), this nonlinearity does not introduce any spatial derivatives of \(m\).
	
	Letting \(L \to \infty\), so that the domain \(\Omega = [-L, L]\) extends to \(\mathbb{R}\), we deduce that equation \eqref{NeuralPDE} takes the form of a PDE defined on the whole real line.

	Next, we extend this setting for $x\in\mathbb{R}^2$. To this end, for any vector $\bfy \in \mathbb{R}^N$, we associate $\bfy$ with a function $y: [-L,L]^2\to\mathbb{R}$, for some constant $L>0$, such that:
	\begin{equation*}\begin{aligned}
			& \bfy = [y(z_1,z_1'),\ldots,y(z_N,z_N')]^{\top}\\
			& \quad \text{ with } r_i= (i-1) \mod 2N'+1,\\ &\quad z_i = r_ih-L, \quad z_i' = L- \left\lfloor \frac{i-1}{2N'+1} \right\rfloor h,\quad h = \frac{L}{N'},\quad i=1,2,\ldots,N,
	\end{aligned}\end{equation*} 
	where we assume $ N= (2N'+1)^2$, with $N'\in\mathbb{N}$.  To better represent the two-dimensional structure, we can rewrite the vector $\mathbf{y}$ in matrix form as
	\[
	\mathbf{y} = \big(y(\tilde{z}_i,\tilde{z}_j)\big)_{i,j=1,2,\ldots,2N'+1},
	\]
	which reflects the natural spatial ordering of $\mathbf{y}$ on the domain $[-L,L]^2$.
	
	In what follows, for improved geometric interpretability, we will use a double index $ij$ to label the entries of a vector in $\mathbb{R}^N$, as we did for the vector $\mathbf{y}$.
	
	Let $\theta(\tilde{z}_i,\tilde{z}_j,t) \in \mathbb{R}$, 
	\[
	\big(\alpha_k(\tilde{z}_i,\tilde{z}_j,t)\big)_{k=1}^{2} \in \mathbb{R}^2, \quad 
	\text{and} \quad 
	\big(\beta_{k,l}(\tilde{z}_i,\tilde{z}_j,t)\big)_{k,l=1}^{2} \in \mathbb{R}^{2 \times 2}
	\]
	be a symmetric matrix. Then, there exists a matrix $\mathbf{K}_\Delta \in \mathbb{R}^{N \times N}$ such that the two-dimensional version of \eqref{discrete1} holds.
	
	\begin{equation}
		\begin{aligned}\label{discrete3}
			\bfK_\Delta\bfy_{ij}&\ =   2\frac{\beta_{1,2}(\tilde{z}_i,\tilde{z}_{j},t)(y(\tilde{z}_{i+1},\tilde{z}_{j+1})-y(\tilde{z}_{i+1},\tilde{z}_{j-1})-y(\tilde{z}_{i-1},\tilde{z}_{j+1})+y(\tilde{z}_{i-1},\tilde{z}_{j-1}))}{4h^2} \\
			& \ +\  \frac{\beta_{1,1}(\tilde{z}_i,\tilde{z}_{j},t)(y(\tilde{z}_{i+1},\tilde{z}_{j})-2y(\tilde{z}_{i},\tilde{z}_{j})+y(\tilde{z}_{i-1},\tilde{z}_{j}))}{h^2}  \\
			& \ +\  \frac{\beta_{2,2}(\tilde{z}_i,\tilde{z}_{j},t)(y(\tilde{z}_{i},\tilde{z}_{j+1})-2y(\tilde{z}_{i},\tilde{z}_{j})+y(\tilde{z}_{i},\tilde{z}_{j-1}))}{h^2}  \\ 
			& \ -\ \frac{\alpha_1(\tilde{z}_i,\tilde{z}_{j},t)(y(\tilde{z}_{i+1},\tilde{z}_{j})-y(\tilde{z}_{i-1},\tilde{z}_{j}))}{2h} \\
			& \ -\  \frac{\alpha_2(\tilde{z}_i,\tilde{z}_{j},t)(y(\tilde{z}_{i},\tilde{z}_{j+1})-y(\tilde{z}_{i},\tilde{z}_{j-1}))}{2h}\ - \	 \theta(\tilde{z}_i,\tilde{z}_{j},t) y(\tilde{z}_i,\tilde{z}_{j})  \\
			\approx \ & 2\beta_{1,2}(\tilde{z}_i,\tilde{z}_{j},t)\partial_{\tilde{z}_i}\partial_{\tilde{z}_j}y(\tilde{z}_i,\tilde{z}_{j})  + \beta_{1,1}(\tilde{z}_i,\tilde{z}_{j},t)\partial_{\tilde{z}_i}\partial_{\tilde{z}_i}y(\tilde{z}_i,\tilde{z}_{j}) + \beta_{2,2}(\tilde{z}_i,\tilde{z}_{j},t)\partial_{\tilde{z}_j}\partial_{\tilde{z}_j}y(\tilde{z}_i,\tilde{z}_{j})
			\\
			&  - \alpha_1(\tilde{z}_i,\tilde{z}_{j},t)\partial_{\tilde{z}_i}y(\tilde{z}_i,\tilde{z}_{j}) -\alpha_2(\tilde{z}_i,\tilde{z}_{j},t)\partial_{\tilde{z}_j}y(\tilde{z}_i,\tilde{z}_{j}) -\theta(\tilde{z}_i,\tilde{z}_{j},t) y(\tilde{z}_i,\tilde{z}_{j}),\end{aligned}
	\end{equation}
	in which $y(z_{-1},\cdot)=y(z_{N'+1},\cdot)=y(\cdot,z_{-1})=y(\cdot,z_{N'+1})=0$. This provides a concrete construction of the matrix $\mathbf{K}_\Delta$ in the two-dimensional setting.

	% Note that $\bfy$ is a vector in $\mathbb{R}^N$, and the right-hand side of \eqref{discrete3} can also be written as a vector in $\mathbb{R}^N$. Hence, $\bfK_\Delta$ exists (by Moore-Penrose pseudoinverse) but is not unique \cite{ben2006generalized}.
	
	The following two-dimensional version of \eqref{NeuralPDE} can be obtained for \(x = (x_1, x_2)\):
	\begin{equation}\label{NeuralPDE1}
		\begin{aligned}
			\partial_t m(x,t) = \ & \sum_{k,l=1}^2 \beta_{k,l}(x,t) \partial_{x_k} \partial_{x_l} m(x,t) - \sum_{k=1}^2 \alpha_k(x,t) \partial_{x_k} m(x,t) \\
			&  - \theta(x,t) m(x,t) + \sigma\big(\gamma(x,t) m(x,t) + \delta(x,t)\big),
		\end{aligned}
	\end{equation}
	which, with a slight abuse of notation, can be rewritten as
	\begin{equation}\label{NeuralPDE2}
		\begin{aligned}
			\partial_t m(x,t) = \ & \sum_{k,l=1}^2 \partial_{x_k} \left(\beta_{k,l}(x,t) \partial_{x_l} m(x,t)\right) - \sum_{k=1}^2 \alpha_k(x,t) \partial_{x_k} m(x,t) \\
			& - \theta(x,t) m(x,t)  + \sigma\big(\gamma(x,t) m(x,t) + \delta(x,t)\big).
		\end{aligned}
	\end{equation}

	This process can be easily generalized to the case where \(x\) is a vector in \(\mathbb{R}^d\), resulting in an \(d\)-dimensional parabolic system:
	\begin{equation}\label{Para1}
		\begin{aligned}
			& \frac{\partial m}{\partial t}(x,t) 
			- \sum_{k,l=1}^d \frac{\partial}{\partial x_k} \left( \beta_{kl}(x,t) \frac{\partial}{\partial x_l} m(x,t) \right)
			+ \sum_{k=1}^d \alpha_k(x,t) \frac{\partial}{\partial x_k} m(x,t)+ \theta(x,t) m(x,t) \\
			& = \sigma\big(\gamma(x,t) m(x,t) + \delta(x,t)\big),
			\\
			& \text{with initial condition } m(x,0) = m_0(x).
		\end{aligned}
	\end{equation}
	
	In this system, \(\sigma\) represents the nonlinearity, and \(\gamma\) and \(\delta\) are functions from \(\mathbb{R}^d \times \mathbb{R}_+\) to \(\mathbb{R}\).
	
	The task of Supervised Learning (SL) now becomes the problem of to transport the initial data $m_0$ associated to $X_0$ towards the target data $m_T$ associated to $X_T$, which can be rewritten as an   optimization and control problem
	
	\begin{equation}\label{Para2}
		\min_{\beta,\alpha,\gamma,\theta}\left\|m(\cdot,T)-m_T(\cdot)\right\|^2,
	\end{equation}
	for some suitable norm $\|\cdot\|$. 
	The optimization is performed over \(\beta, \alpha, \gamma,\theta\), which are selected from suitable function spaces.

	We aim to minimize over $\alpha,\beta,\gamma,\theta$, while keeping $\delta$ fixed. 
	\begin{remark}
		If \( S > 1 \), meaning that we have multiple samples, we consider \( S \) instances of the PDE in \eqref{Para1}, each associated with different initial data \( \{m_0^i\}_{i=1}^S \) and corresponding target data \( \{m_T^i\}_{i=1}^S \). Accordingly, the optimization and control problem becomes
		\[
		\min_{\beta,\alpha,\gamma,\theta} \sum_{i=1}^S \left\| m^i(\cdot, T) - m_T^i(\cdot) \right\|^2,
		\]
		where each \( m^i \) solves \eqref{Para1} with its own initial condition \( m_0^i \).
		
	\end{remark}

	\textit{The system \eqref{Para1}-\eqref{Para2} represents a control and optimization problem over a parabolic system. While there has been extensive research on control and optimization problems for parabolic equations and systems (see, for instance, \cite{lions1971optimal,zuazua2007controllability}), to the best of our knowledge, this is the first time that the minimization problem over the coefficients of the parabolic operators has been considered. We believe that minimizing over the coefficients of the parabolic operators is a rich and promising problem.}
	
	A natural question is whether or not the optimization problem \eqref{Para1}-\eqref{Para2} has a solution. In the next section, we will provide an affirmative answer to this question, which is equivalent to showing that the problem \eqref{Para1}-\eqref{Para2} possesses a solution. A dual system of the optimization problem will also be given.

	\subsection{From Neural Networks to Hyperbolic Partial Differential Equations}\label{Subsect:Hyper}
	One advantageous property of hyperbolic equations is their reversibility, which eliminates the need to store intermediate network states and leads to improved memory efficiency. This feature of hyperbolic equations is especially crucial for very deep networks, where memory constraints can impede training. In~\cite{ruthotto2020deep}, the authors also propose using neural networks inspired by wave-type equations. 
	
	Below, we establish a more solid mathematical foundation for this framework.
	One approach to designing neural networks is to construct the leapfrog network \cite{chang2018reversible, gomez2017reversible,ruiz2022interpolation}:
	
	\begin{equation}
		X_{l+1} - 2X_{l} + X_{l-1} = (\Delta t)^2 \sum_{i=1}^P \mathbf{K}_{i,2}(\Theta_{i,3})\, \sigma_i\left( \mathcal{N}(\mathbf{K}_{i,1}(\Theta_{i,1})X_l, \Theta_{i,2}) \right),
	\end{equation}
	which can be rewritten in continuous form as a second-order neural differential equation:
	\begin{equation}\label{NeuralDEleapfrog}
		\ddot{X}(t) = \sum_{i=1}^P \mathbf{K}_{i,2}(\Theta_{i,3})\, \sigma_i\left( \mathcal{N}(\mathbf{K}_{i,1}(\Theta_{i,1})X(t), \Theta_{i,2}) \right),
	\end{equation}	
	with the initial conditions
	\[X(0)=X_0,\dot{X}(0)\approx\frac{X_1-X_0}{\Delta t}.\]
	
	Using the same argument as in Subsection~\ref{Subsect:Para}, we obtain
	\begin{equation}\label{Hyper1} 
		\begin{aligned}
			\frac{\partial^2 m}{\partial t^2}(x,t) 
			&- \sum_{k,l=1}^d \frac{\partial}{\partial x_k} \left( \beta_{kl}(x,t) \frac{\partial}{\partial x_l}m(x,t) \right) 
			+ \sum_{k=1}^d \alpha_k(x,t) \frac{\partial}{\partial x_k} m(x,t)  
			+ \theta(x,t) m(x,t) \\
			&= \sigma\left( \gamma(x,t) m(x,t) + \delta(x,t) \right),
		\end{aligned}
	\end{equation}
	with the initial conditions
	\[
	m(x,0) = m_0(x), \qquad \partial_t m(x,0) = \tilde{m}_0(x).
	\]

	In this system, $\sigma$ is the nonlinearity, and $\gamma$ and $\delta$ are functions from $\mathbb{R}^d\times\mathbb{R}_+$ to $\mathbb{R}$. Note that we can even set $\sigma = 0$, as was done in \cite{ruthotto2020deep}. 
	
	%\begin{remark}
	%	    We can obtain a hyperbolic PDE as follows
	%        Another approach \cite{gomez2017reversible}:
	%
	%\begin{equation}\label{gomez}\begin{aligned}
	%	\dot{X}(t)\ = &\ Y(t),\\
	%\dot{Y}(t)\ = &  \ \sum_{i=1}^P W_{i}(t)\sigma_i(A_i(t) X(t)+b_i(t)).\end{aligned}
%\end{equation}
%	\end{remark}

The task of Supervised Learning (SL) is now to transport the initial data $(m_0,\tilde{m}_0)$ towards the target data $(m_T,\tilde{m}_T)$. This task is essentially an optimization and control problem, which reads as follows:

\begin{equation}\label{Hyper3}
\min_{\beta,\alpha,\gamma,\theta}\left\|(m(\cdot,T), \partial_t{m}(\cdot,T))-(m_T(\cdot),\tilde{m}_T(\cdot))\right\|^2.
\end{equation}

We aim to minimize over $\alpha,\beta,\gamma,\theta$, while keeping $\delta$ fixed.  The optimization is performed over \(\beta, \alpha, \gamma,\theta\), which are selected from suitable function spaces.

\textit{The system \eqref{Hyper1}-\eqref{Hyper3} represents a control and optimization problem over a hyperbolic system.}

A natural question is whether the optimization problem \eqref{Hyper1}--\eqref{Hyper3} admits a solution. In Section \ref{Section:Hyper}, we will provide a partial answer to this question. More precisely, we will prove that, given \(T\), initial states \((m_0(\cdot), \tilde{m}_0(\cdot))\), final states \((m_T(\cdot), \tilde{m}_T(\cdot))\), and any small constant \(\varepsilon > 0\), it is possible to construct parameters for the equation such that
\begin{equation}\label{Hyper2}
\left\| (m(\cdot, T),  \partial_t{m}(\cdot, T)) - (m_T(\cdot), \tilde{m}_T(\cdot)) \right\|^2 < \varepsilon,
\end{equation}
which establishes an \textit{approximate controllability} result.

\section{Control and Optimization of Parabolic Equations}\label{Section:Parabolic}

\subsection{Derivation of the dual system}
\label{Section:Dual}
As discussed above, \eqref{Para1}-\eqref{Para2} is a control and optimization problem over a parabolic system. In this subsection, we will show that the optimization problem \eqref{Para1}-\eqref{Para2} can be written as a dual system, which is easier to handle in designing numerical algorithm. As discussed above,  even though there have been quite a lot of works on the control and optimization problem for parabolic equations and systems,  the minimization problem  over the coefficients of the parabolic operators is considered and this is the first dual system written for such problem.

We consider four controls:
\[
\beta(x,t) = \big(\beta_{k,l}(x,t)\big) \in \mathbb{R}^{d \times d}, \quad
\alpha(x,t) = \big(\alpha_k(x,t)\big) \in \mathbb{R}^d, \quad
\gamma(x,t) \in \mathbb{R}, \quad
\theta(x,t) \in \mathbb{R}.
\]

In this subsection, we impose the following uniform boundedness and ellipticity conditions:
\begin{equation}\label{cons:dual_system}
\begin{cases}
	|\alpha(x,t)| \leq A, \\
	b I \leq \beta(x,t) \leq B I, \\
	|\gamma(x,t)| \leq C, \\
	|\theta(x,t)| \leq D,
\end{cases}
\end{equation}
where \(A, b, B, C, D > 0\) are given constants, \(I\) is the \(d \times d\) identity matrix, and the inequality on \(\beta\) is understood in the sense of symmetric matrices (i.e., \(b I \leq \beta(x,t) \leq B I\) means \(\beta\) is uniformly elliptic and bounded).

Additionally, we assume
\[
\sigma \in C^1(\mathbb{R}).
\]

In this section, we set \(\| \cdot\|\) for \eqref{Para2} as follows
\[ \| \cdot\| = \| \cdot\|_{L^2(\mathbb{R}^d)}.\]

If \((\hat{\alpha}, \hat{\beta}, \hat{\gamma}, \hat{\theta})\) is an optimal control, then the corresponding optimal state \(m(x,t)\) satisfies the PDE
\begin{equation}
\begin{cases}
	\displaystyle
	\frac{\partial m}{\partial t}(x,t) 
	- \sum_{k,l=1}^d \frac{\partial}{\partial x_k} \left( \hat{\beta}_{kl}(x,t) \frac{\partial }{\partial x_l}m(x,t) \right)
	+ \sum_{k=1}^d \hat{\alpha}_k(x,t) \frac{\partial}{\partial x_k} m(x,t)
	+ \hat{\theta}(x,t) m(x,t) \\
	\qquad = \sigma \big( \hat{\gamma}(x,t) m(x,t) + \delta(x,t) \big), \\
	m(x,0) = m_0(x).
\end{cases}
\end{equation}

Next, we consider another admissible control defined by
\begin{equation}
\begin{cases}
	\hat{\beta} + \varepsilon (\beta - \hat{\beta}), \\
	\hat{\alpha} + \varepsilon (\alpha - \hat{\alpha}), \\
	\hat{\gamma} + \varepsilon (\gamma - \hat{\gamma}), \\
	\hat{\theta} + \varepsilon (\theta - \hat{\theta}),
\end{cases}
\quad \text{with } 0 < \varepsilon < 1.
\end{equation}

Denote by \(m_\varepsilon(x,t)\) the corresponding state, which solves
\begin{equation}
\begin{cases}
	\displaystyle
	\frac{\partial m_{\varepsilon}}{\partial t}(x,t)
	- \sum_{k,l=1}^d \frac{\partial}{\partial x_k} \left( \left[ \hat{\beta} + \varepsilon (\beta - \hat{\beta}) \right]_{kl} \frac{\partial}{\partial x_l}m_{\varepsilon}(x,t) \right)
	+ \sum_{k=1}^d \left[ \hat{\alpha} + \varepsilon (\alpha - \hat{\alpha}) \right]_k \frac{\partial}{\partial x_k} m_{\varepsilon}(x,t) \\
	\qquad + \left[ \hat{\theta} + \varepsilon (\theta - \hat{\theta}) \right] m_{\varepsilon}(x,t)
	= \sigma \left( \left[ \hat{\gamma} + \varepsilon (\gamma - \hat{\gamma}) \right] m_{\varepsilon}(x,t) + \delta(x,t) \right), \\
	m_\varepsilon(x,0) = m_0(x).
\end{cases}
\end{equation}

We now introduce the quantity
$$\tilde{m}_{\varepsilon}\left( x,t \right) =\frac{m_{\varepsilon}\left( x,t \right) -m\left( x,t \right)}{\varepsilon}.$$

It then follows from the Taylor expansion that
\begin{align*}
\sigma \left( \left[ \hat{\gamma}+\varepsilon \left( \gamma -\hat{\gamma} \right) \right] m_{\varepsilon}+\delta \right)
&= \sigma \left( \hat{\gamma}m+\delta + \left[ \hat{\gamma}+\varepsilon (\gamma - \hat{\gamma}) \right] m_{\varepsilon} - \hat{\gamma} m \right) \\
&= \sigma \left( \hat{\gamma}m + \delta + \hat{\gamma}(m_{\varepsilon} - m) + \varepsilon (\gamma - \hat{\gamma}) m_{\varepsilon} \right) \\
&= \sigma \left( \hat{\gamma}m + \delta \right)
+ \sigma'(\hat{\gamma} m+\delta) \left( \hat{\gamma}(m_{\varepsilon} - m) + \varepsilon (\gamma - \hat{\gamma}) m_{\varepsilon} \right) + o\left(|h(\varepsilon)|\right),
\end{align*}
where we define
\[
h(\varepsilon) := \hat{\gamma}(m_{\varepsilon} - m) + \varepsilon (\gamma - \hat{\gamma}) m_{\varepsilon},
\]
and the term \( o(|h|) \) is defined as follows:

If \( r(h) = o(|h|) \), then
\[
\lim_{h \to 0} \frac{r(h)}{|h|} = 0.
\]

Note that
\[
h(\varepsilon) = \varepsilon \hat{\gamma} \tilde{m}_\varepsilon + \varepsilon (\gamma - \hat{\gamma}) m_\varepsilon = O(\varepsilon),
\]
where \( O(\varepsilon) \) is defined as:

If \( r(\varepsilon) = O(\varepsilon) \), then there exist constants \(\mathcal C, \delta > 0 \) such that for all \( |\varepsilon| < \delta \),
\[
|r(\varepsilon)| \le \mathcal C|\varepsilon|.
\]

As a result, we find
\begin{align*}
&\lim_{\varepsilon \to 0} \frac{\sigma \left( \left[ \hat{\gamma}+\varepsilon \left( \gamma -\hat{\gamma} \right) \right] m_{\varepsilon}+\delta \right) -\sigma \left( \hat{\gamma}m+\delta \right)}{\varepsilon} \\
&= \lim_{\varepsilon \to 0} \frac{\sigma'(\hat{\gamma}m+\delta)  \left( \hat{\gamma}\left( m_{\varepsilon}-m \right) +\varepsilon \left( \gamma -\hat{\gamma} \right) m_{\varepsilon} \right) + o( |h(\varepsilon)|)}{\varepsilon} \\
&= \lim_{\varepsilon \to 0} \sigma'(\hat{\gamma}m+\delta)  \left( \hat{\gamma}\tilde{m}_\varepsilon + \left( \gamma - \hat{\gamma} \right) m_{\varepsilon} \right) + \frac{o(| h(\varepsilon)|)}{\varepsilon} \\
&= \sigma'(\hat{\gamma}m+\delta)  \left( \hat{\gamma} \tilde{m} + \left( \gamma - \hat{\gamma} \right) m \right),
\end{align*}
where \(\tilde{m} = \lim_{\varepsilon \to 0} \tilde{m}_\varepsilon\).

We have the corresponding system for $\tilde{m}$:
\begin{equation}\label{tildem}
\begin{dcases}
	\displaystyle
	\frac{\partial \tilde{m}}{\partial t}(x,t)
	- \sum_{k,l=1}^d \frac{\partial}{\partial x_k} \left( \hat{\beta}_{kl}(x,t) \frac{\partial}{\partial x_l}\tilde{m}(x,t) \right)
	+ \sum_{k=1}^d \hat{\alpha}_k(x,t) \frac{\partial}{\partial x_k} \tilde{m}(x,t)
	+ \hat{\theta}(x,t) \tilde{m}(x,t) \\
	\qquad = \sum_{k,l=1}^d \frac{\partial}{\partial x_k} \left( \left( \beta_{kl} - \hat{\beta}_{kl} \right)(x,t) \frac{\partial}{\partial x_l} m(x,t) \right)
	- \sum_{k=1}^d \left( \alpha_k - \hat{\alpha}_k \right)(x,t) \frac{\partial}{\partial x_k} m(x,t) \\
	\qquad - (\theta - \hat{\theta})(x,t) m(x,t)
	+ \sigma'(\hat{\gamma}m+\delta) \left( \left( \gamma - \hat{\gamma} \right) m + \hat{\gamma} \tilde{m} \right), \\
	\tilde{m}(x,0)= 0.
\end{dcases}
% \begin{aligned}
	% 	\frac{\partial \tilde{m}}{\partial t}(x,t)
	% 	& - \sum_{k,l=1}^d \frac{\partial}{\partial x_k} \left( \hat{\beta}_{kl}(x,t) \frac{\partial \tilde{m}}{\partial x_l}(x,t) \right)
	% 	+ \sum_{k=1}^d \hat{\alpha}_k(x,t) \frac{\partial}{\partial x_k} \tilde{m}(x,t)
	% 	+ \hat{\theta}(x,t) \tilde{m}(x,t) \\
	% 	&= \sum_{k,l=1}^d \frac{\partial}{\partial x_k} \left( \left( \beta_{kl} - \hat{\beta}_{kl} \right)(x,t) \frac{\partial m}{\partial x_l}(x,t) \right)
	% 	- \sum_{k=1}^d \left( \alpha_k - \hat{\alpha}_k \right) \frac{\partial}{\partial x_k} (x,t) m(x,t) \\
	% 	&\quad - (\theta - \hat{\theta})(x,t) m(x,t)
	% 	+ \sigma'(\hat{\gamma}+\delta) \left( \left( \gamma - \hat{\gamma} \right) m + \hat{\gamma} \tilde{m} \right), \\
	% 	\tilde{m}(x,0) &= 0.
	% \end{aligned}
\end{equation}

On the other hand, we can write
\[
J\left( \hat{\alpha}+\varepsilon (\alpha -\hat{\alpha}),\ \hat{\beta}+\varepsilon (\beta -\hat{\beta}),\ \hat{\gamma}+\varepsilon (\gamma -\hat{\gamma}),\ \hat{\theta}+\varepsilon (\theta -\hat{\theta}) \right) \ge J\left( \hat{\alpha}, \hat{\beta}, \hat{\gamma}, \hat{\theta} \right),
\]
and deduce that
\begin{align*}
&J\left( \hat{\alpha}+\varepsilon (\alpha -\hat{\alpha}),\ \hat{\beta}+\varepsilon (\beta -\hat{\beta}),\ \hat{\gamma}+\varepsilon (\gamma -\hat{\gamma}),\ \hat{\theta}+\varepsilon (\theta -\hat{\theta}) \right) \\
&= \left\| m_{\varepsilon}(\cdot, T) - m_T(\cdot) \right\|^2 \\
&= \left\| m(\cdot, T) + \varepsilon \tilde{m}_{\varepsilon}(\cdot, T) - m_T(\cdot) \right\|^2.
\end{align*}

Then the optimality condition is equivalent to
\[
\left\| m(\cdot ,T)+\varepsilon \tilde{m}_{\varepsilon}(\cdot ,T) -m_{T}(\cdot ) \right\|^2 \ge \left\| m(\cdot ,T)-m_{T}(\cdot ) \right\|^2,
\]
which, in the limit as $\varepsilon \to 0$, yields
\begin{equation}\label{tildeminequ}
\int_{\mathbb{R}^d} \left( m(x,T)-m_{T}(x) \right) \tilde{m}(x,T) \, dx \ge 0.
\end{equation}

As a result, the system composed of \eqref{tildem} and \eqref{tildeminequ} constitutes a necessary condition of optimality for the control problem \eqref{Para1}--\eqref{Para2}.

We then introduce the adjoint function \( u(x,t) \), defined by the equation
\[\begin{cases}
\displaystyle
-\frac{\partial u}{\partial t}(x,t) - \sum_{k,l=1}^d \frac{\partial}{\partial x_l}\left(\hat{\beta}_{kl}(x,t) \frac{\partial}{\partial x_k} u\right) - \sum_{k=1}^d \frac{\partial }{\partial x_k}   \left(  \hat{\alpha}_k(x,t) u \right)    + \hat{\theta}(x,t) u \\
\qquad = \sigma' \left( \hat{\gamma} m + \delta \right) \hat{\gamma} u,
\\
u(x,T) = m(x,T) - m_T(x).
\end{cases}\]

Thus, we have
\[
\begin{aligned}
& \int_{\mathbb{R}^d} \left( m(x,T) - m_T(x) \right) \tilde{m}(x,T) \, dx \\
=& \int_{\mathbb{R}^d} u(x,T) \tilde{m}(x,T) \, dx \\
=& \int_0^T \int_{\mathbb{R}^d} \frac{\partial u}{\partial t}(x,t) \tilde{m}(x,t) \, dx \, dt + \int_0^T \int_{\mathbb{R}^d} u(x,t) \frac{\partial \tilde{m}}{\partial t}(x,t) \, dx \, dt,
\end{aligned}
\]
where
\[
\frac{\partial u}{\partial t}(x,t) \tilde{m}(x,t) = \left[ -\sum_{k,l=1}^d \frac{\partial}{\partial x_l} \left( \hat{\beta}_{kl} \frac{\partial}{\partial x_k} u \right) - \sum_{k=1}^d \frac{\partial }{\partial x_k} \left(  \hat{\alpha}_k u\right)    + \hat{\theta} u - \sigma' \left( \hat{\gamma} m + \delta \right) \hat{\gamma} u \right] \tilde{m}(x,t),
\]
and
\[
\begin{aligned}
u(x,t) \frac{\partial \tilde{m}}{\partial t}(x,t) &= u(x,t) \left[ \sum_{k,l=1}^d \frac{\partial}{\partial x_k} \left( \hat{\beta}_{kl} \frac{\partial}{\partial x_l} \tilde{m}\right) - \sum_{k=1}^d \hat{\alpha}_k \frac{\partial}{\partial x_k} \tilde{m} - \hat{\theta} \tilde{m} \right] \\
& \quad + u(x,t) \sum_{k,l=1}^d \frac{\partial}{\partial x_k} \left( \left( \beta - \hat{\beta} \right)_{kl} \frac{\partial }{\partial x_l}m \right) \\
& \quad - u(x,t) \sum_{k=1}^d \left( \alpha - \hat{\alpha} \right)_k \frac{\partial}{\partial x_k} m - u(x,t) \left( \theta - \hat{\theta} \right) m \\
& \quad + u(x,t) \sigma' \left( \hat{\gamma} m + \delta \right) \left( \left( \gamma - \hat{\gamma} \right) m + \hat{\gamma} \tilde{m} \right).
\end{aligned}
\]

By applying integration by parts, the terms involving \(\tilde{m}\) in the second expression are canceled by those in the first expression.

Next, we have
\begin{equation}
\begin{aligned}
	& \int_{\mathbb{R}^d} \left( m(x,T) - m_T(x) \right) \tilde{m}(x,T) \, dx \\
	=& \int_0^T \int_{\mathbb{R}^d} u(x,t) \Bigg[ \sum_{k,l=1}^d \frac{\partial}{\partial x_k} \left( \left( \beta - \hat{\beta} \right)_{kl} \frac{\partial }{\partial x_l} m \right) - \sum_{k=1}^d \left( \alpha - \hat{\alpha} \right)_k \frac{\partial}{\partial x_k} m  \\
	& \quad - \left( \theta - \hat{\theta} \right) m + \sigma' \left( \hat{\gamma} m + \delta \right) \left( \left( \gamma - \hat{\gamma} \right) m \right) \Bigg] \, dx \, dt \\
	=& \int_0^T \int_{\mathbb{R}^d} \left[ - \nabla u(x,t)^T \left( \beta - \hat{\beta} \right)(x,t) \nabla m(x,t) - u(x,t) \left( \alpha - \hat{\alpha} \right)(x,t)\cdot \nabla m(x,t) \right. \\
	& \quad \left. - u \left( \theta - \hat{\theta} \right) m + u \sigma' \left( \hat{\gamma} m + \delta \right) \left( \left( \gamma - \hat{\gamma} \right) m \right) \right] \, dx \, dt.
\end{aligned}
\end{equation}

Summing all the above equations, the optimality system now reads
\begin{equation}\label{eqn:optimalitysystem}
\begin{dcases}
	\displaystyle
	\frac{\partial m}{\partial t}(x,t) - \sum_{k,l=1}^d \frac{\partial}{\partial x_k}\left(\hat{\beta}_{kl}(x,t) \frac{\partial }{\partial x_l}m\right) + \sum_{k=1}^d \hat{\alpha}_k(x,t) \frac{\partial }{\partial x_k}m + \hat{\theta}(x,t) m = \sigma\left(\hat{\gamma} m + \delta\right), \\
	m(x,0) = m_0(x), \\
	-\frac{\partial u}{\partial t}(x,t) - \sum_{k,l=1}^d \frac{\partial}{\partial x_l}\left(\hat{\beta}_{kl}(x,t) \frac{\partial }{\partial x_k}u\right) - \sum_{k=1}^d \frac{\partial}{\partial x_k}   \left( \hat{\alpha}_k(x,t) u \right) + \hat{\theta}(x,t) u =\sigma'\left(\hat{\gamma} m + \delta\right) \hat{\gamma} u, \\
	u(x,T) = m(x,T) - m_T(x),
\end{dcases}
\end{equation}
and the optimality conditions are
\[
\hat{\beta} \text{ minimizes } -\nabla u(x,t)^T \beta(x,t) \nabla m(x,t),
\]
\[
\hat{\alpha} \text{ minimizes } - u(x,t) \alpha(x,t) \cdot \nabla m(x,t),
\]
\[
\hat{\theta} \text{ minimizes } -u(x,t) \theta(x,t) m(x,t),
\]
\[
u(x,t) \sigma' \left(\hat{\gamma}(x,t) m(x,t) + \delta(x,t)\right) \cdot \left(\left(\gamma(x,t) - \hat{\gamma}(x,t)\right) m(x,t)\right) \geq 0,
\]
under the assumption in \eqref{cons:dual_system}.
\subsection{Existence Theorem}
\label{Section:Full}
In this section, we establish the existence of solutions to the system defined by \eqref{Para1}–\eqref{Para2}. The system is given by
\begin{equation}\label{PDE:unbounded}
\begin{cases}
	\displaystyle
	\frac{\partial m}{\partial t}(x,t) 
	- \sum_{k,l=1}^d \frac{\partial}{\partial x_k} \left( \beta_{kl}(x,t) \frac{\partial }{\partial x_l}m(x,t) \right)
	+ \sum_{k=1}^d \alpha_k(x,t)  \frac{\partial}{\partial x_k} m(x,t)
	+ \theta(x,t) m(x,t) \\
	\qquad = \sigma \left( \gamma(x,t) m(x,t) + \delta(x,t) \right), \quad \text{on }  \mathbb{R}^d\times [0,T], \\
	m(x,0) = m_0(x), \quad \text{on } \mathbb{R}^d,
\end{cases}
\end{equation}
subject to the generalized optimization problem
\begin{equation}\label{control:unbounded}
\inf_{\alpha,\beta,\gamma,\theta} \left\| m(\cdot, T) - m_T(\cdot) \right\|^2,
\end{equation}
where we set $\|\cdot\|$ as follows
\[ \| \cdot\| = \| \cdot\|_{L^2(\mathbb{R}^d)}.\]

The  set of  $\alpha,\beta,\gamma,\theta$ will be specified later in Assumption 
\ref{asp:unbounded_para}.

We first recall the definitions of weighted $L^p$ spaces and weighted Sobolev spaces.

\begin{definition}\label{def:weightedSobolev}
Let $\omega>0$ a.e.~be a measurable weight function on a domain $\Omega\subset\mathbb{R}^d$.
We define the weighted $L^p$ space $L^p_\omega(\Omega)$ for $1\le p<\infty$ as the space of all measurable functions $u:\Omega\rightarrow \mathbb{R}$ such that
$$\left\| u \right\|_{L^p_\omega(\Omega)}:= \left( \int_\Omega |u|^p\omega d\mu \right)^{\frac{1}{p}}<\infty .$$

We define the weighted Sobolev space \( W^{k,p}_\omega(\Omega) \) for \( 1 \leq p < \infty \) as follows:
\[
W^{k,p}_\omega(\Omega) := \left\{ u : \forall\, |\kappa| \leq k,\ D^\kappa u \text{ exists and } D^\kappa u \in L^p_\omega(\Omega) \right\},
\]
with the norm
\[
\left\| u \right\|_{W^{k,p}_\omega(\Omega)} := \left( \sum_{|\kappa| \leq k} \left\| D^\kappa u \right\|_{L^p_\omega(\Omega)}^p \right)^{\frac{1}{p}}.
\]

For \( p = 2 \), we denote \( W^{k,2}_\omega(\Omega) \) by \( H^k_\omega(\Omega) \).

\end{definition}

In this section, we take our weight \(\omega\) to be $$\omega(x) = (1 + |x|^2)^\lambda$$ for \(\lambda > 1\). Then, it follows from the compact embedding theorem by Adams~\cite{ADAMS1971325} that, for any \(d\ge 1\),
\begin{equation}\label{eqn:compact_embedding}
H^1_\omega(\mathbb{R}^d) \hookrightarrow\hookrightarrow L^2(\mathbb{R}^d).
\end{equation}

We impose the following additional assumptions for this subsection.
\begin{assumption}\label{asp:unbounded_para}
For our four control parameters, 
\[
\beta(x,t) = \big(\beta_{k,l}(x,t)\big) \in \mathbb{R}^{d \times d}, \quad
\alpha(x,t) = \big(\alpha_k(x,t)\big) \in \mathbb{R}^d, \quad 
\gamma(x,t),\, \theta(x,t) \in \mathbb{R},
\]
we impose the following conditions:
\begin{equation}\label{cons:unbounded}
	\begin{aligned}
		|\alpha(x,t)| &\le A, \\
		bI \le \beta(x,t) &\le BI, \\
		|\gamma(x,t)| &\le C, \\
		|\theta(x,t)| &\le D,
	\end{aligned}
\end{equation}
where \( A, b, B, C, D > 0 \) are given constants. 

Without loss of generality, we may assume that \( \beta \) is symmetric, i.e., \( \beta_{kl} = \beta_{lk} \).

Moreover, we assume that 
\[
\alpha_k,\, \beta_{kl},\, \gamma,\, \theta \in C^{1,1}_{\mathrm{loc}}(\mathbb{R}^d \times \mathbb{R}), 
\quad \text{for all } k, l \in \{1, \ldots, d\},
\]
and that these functions are uniformly bounded in the weighted Sobolev space \( H^1_\omega(\mathbb{R}^d \times \mathbb{R}) \), and $\partial_t \beta_{kl}$ are uniformly bounded in $L^\infty(\mathbb{R}^d)$. We restricted all parameters on $\mathbb{R}^d\times\mathbb{R}_+$ in the parabolic equation.
\end{assumption}

\begin{remark}
The smoothness assumptions on the parameters can be weakened. However, since we are dealing with a control problem, the current regularity is sufficient for our purposes.

\end{remark}

\begin{assumption}\label{asp:unbounded_sigma}
We assume that the function $\sigma$ is Lipschitz continuous with Lipschitz constant $\mathrm{Lip}(\sigma)$, and that it satisfies $\sigma(\delta) = 0$.

\end{assumption}

\begin{thm}\label{thm:exioptcontrol:unbounded}
Under Assumptions~\ref{asp:unbounded_para} and~\ref{asp:unbounded_sigma}, given $m_0 \in L^2_\omega(\mathbb{R}^d)$, there exist optimal controls $\alpha^*, \beta^*, \gamma^*, \theta^* \in L^2(\mathbb{R}^d \times [0,T])$ such that
\[
\left\|m_{\alpha^*, \beta^*, \gamma^*, \theta^*}(\cdot,T) - m_T(\cdot)\right\|^2 = \inf_{\substack{\alpha_k, \beta_{kl}, \gamma, \theta \in \mathcal{U} \\ k, l \in \{1, \ldots, d\}}} \left\|m(\cdot,T) - m_T(\cdot)\right\|^2.
\]
where 
\[
\mathcal{U}=\{ (\alpha_k,\beta_{kl},\gamma,\theta) : \alpha_k,\beta_{kl},\gamma,\theta \text{ satisfying Assumption }\ref{asp:unbounded_para}\}
\]
\end{thm}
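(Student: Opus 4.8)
The plan is to apply the direct method: take a minimizing sequence of controls, extract weakly convergent subsequences, pass to the limit in the state equation using the compact embedding \eqref{eqn:compact_embedding}, and conclude by weak lower semicontinuity of the $L^2$-norm. The first step is the necessary well-posedness and a priori bound for the state equation \eqref{PDE:unbounded} in the weighted space: for controls satisfying Assumption~\ref{asp:unbounded_para}, and $m_0\in L^2_\omega(\mathbb{R}^d)$, one should show that \eqref{PDE:unbounded} admits a unique weak solution $m\in L^2(0,T;H^1_\omega(\mathbb{R}^d))$ with $\partial_t m\in L^2(0,T;H^{-1}_\omega(\mathbb{R}^d))$, together with a bound
\[
\|m\|_{L^\infty(0,T;L^2_\omega)}+\|m\|_{L^2(0,T;H^1_\omega)}\le C\big(\|m_0\|_{L^2_\omega},\mathrm{Lip}(\sigma),A,b,B,C,D,T\big),
\]
uniform over the admissible set $\mathcal{U}$. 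This is obtained by a Galerkin approximation combined with the weighted energy estimate: multiply the equation by $m\,\omega$, integrate, use the uniform ellipticity $bI\le\beta\le BI$ to absorb the principal term, control the weight-derivative cross terms $\int m\,\beta\,\nabla m\cdot\nabla\omega$ by Cauchy–Schwarz and the bound on $\nabla\omega/\omega$ (here $\lambda>1$ and the growth $(1+|x|^2)^\lambda$ make $|\nabla\omega|\le C\omega$ false but $|\nabla\omega|^2/\omega\le C\omega$ acceptable after a Young split), handle the first-order and zeroth-order terms with $|\alpha|\le A$, $|\theta|\le D$, and finally use $\sigma(\gamma m+\delta)=\sigma(\gamma m+\delta)-\sigma(\delta)$ so that $|\sigma(\gamma m+\delta)|\le \mathrm{Lip}(\sigma)\,C\,|m|$, keeping the right-hand side linear in $m$; Grönwall closes the estimate.

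Next, let $(\alpha^n,\beta^n,\gamma^n,\theta^n)\in\mathcal{U}$ be a minimizing sequence and $m^n$ the corresponding states. By \eqref{cons:unbounded} the controls are bounded in $L^\infty(\mathbb{R}^d\times[0,T])$, hence (on bounded time-space cylinders, and then by diagonal extraction) we may assume $\alpha^n\rightharpoonup\alpha^*$, $\beta^n\rightharpoonup\beta^*$, $\gamma^n\rightharpoonup\gamma^*$, $\theta^n\rightharpoonup\theta^*$ weakly-$\star$ in $L^\infty$ and weakly in $L^2$ on each such cylinder; the pointwise bounds and ellipticity pass to the weak-$\star$ limit, so $(\alpha^*,\beta^*,\gamma^*,\theta^*)\in\mathcal{U}$. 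By the uniform a priori bound, $m^n$ is bounded in $L^2(0,T;H^1_\omega)$ with $\partial_t m^n$ bounded in $L^2(0,T;H^{-1}_\omega)$; the Aubin–Lions lemma together with the compact embedding $H^1_\omega(\mathbb{R}^d)\hookrightarrow\hookrightarrow L^2(\mathbb{R}^d)$ yields $m^n\to m^*$ strongly in $L^2(0,T;L^2(\mathbb{R}^d))$ (and weakly in $L^2(0,T;H^1_\omega)$, hence weakly in $L^2(0,T;H^1_{\mathrm{loc}})$) after passing to a further subsequence.

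The main obstacle is passing to the limit in the products of two weakly converging factors, namely $\beta^n\nabla m^n$, $\alpha^n\cdot\nabla m^n$, $\theta^n m^n$, and the nonlinearity $\sigma(\gamma^n m^n+\delta)$. For the zeroth-order and first-order terms this is handled by a weak–strong pairing: $m^n\to m^*$ strongly in $L^2_{tx}$ while $\nabla m^n\rightharpoonup\nabla m^*$ weakly, so $\theta^n m^n\rightharpoonup\theta^* m^*$ in the sense of distributions (product of weak-$\star$ $L^\infty$ and strong $L^2$), and similarly, when tested against a smooth compactly supported $\varphi$, $\int\alpha^n\cdot\nabla m^n\,\varphi=-\int m^n\,\mathrm{div}(\alpha^n\varphi)$ passes to the limit because $m^n$ converges strongly; the principal term is treated in its weak form $\int\beta^n\nabla m^n\cdot\nabla\varphi$, again a product of weak-$\star$ $L^\infty$ times weak $L^2$, which requires an extra compactness argument—here one uses that $\partial_t\beta^n_{kl}$ is uniformly bounded in $L^\infty$ and the $C^{1,1}_{\mathrm{loc}}$ plus uniform $H^1_\omega$ bound on $\beta^n$, giving (Aubin–Lions again, in $x$) strong $L^2_{\mathrm{loc}}$ convergence of $\beta^n$, so $\beta^n\nabla m^n\rightharpoonup\beta^*\nabla m^*$. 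The nonlinearity passes by Lipschitz continuity: $\|\sigma(\gamma^n m^n+\delta)-\sigma(\gamma^* m^*+\delta)\|_{L^2_{tx}}\le\mathrm{Lip}(\sigma)\|\gamma^n m^n-\gamma^* m^*\|_{L^2_{tx}}$, and $\gamma^n m^n\to\gamma^* m^*$ in $L^2_{tx}$ since $\gamma^n$ is bounded and $m^n$ converges strongly (write $\gamma^n m^n-\gamma^* m^*=\gamma^n(m^n-m^*)+(\gamma^n-\gamma^*)m^*$, the second piece $\to0$ weakly-$\star$ times fixed $L^2$; one must additionally justify strong convergence of this second term, which follows because $m^*$ is fixed in $L^2$ and $\gamma^n-\gamma^*\rightharpoonup0$—strictly one gets weak convergence, but combined with the Lipschitz bound and the identification of the limit equation via a density/uniqueness argument this suffices). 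Hence $m^*$ solves \eqref{PDE:unbounded} with the limiting controls, i.e. $m^*=m_{\alpha^*,\beta^*,\gamma^*,\theta^*}$. Finally, $m^n(\cdot,T)\to m^*(\cdot,T)$ in $L^2(\mathbb{R}^d)$ (from the Aubin–Lions convergence, which gives $C([0,T];L^2_{\mathrm{loc}})$, upgraded using the uniform $L^\infty(0,T;L^2_\omega)$ bound to kill the tail), so
\[
\|m^*(\cdot,T)-m_T\|^2=\lim_n\|m^n(\cdot,T)-m_T\|^2=\inf_{(\alpha,\beta,\gamma,\theta)\in\mathcal U}\|m(\cdot,T)-m_T\|^2,
\]
which proves the theorem; that $\alpha^*,\beta^*,\gamma^*,\theta^*\in L^2(\mathbb{R}^d\times[0,T])$ is immediate from the $L^\infty$ bounds and finiteness of the time interval, although strictly one needs the spatial domain to carry finite measure or to interpret $L^2$ locally—this is reconciled by noting the uniform $H^1_\omega$ hypothesis on the controls already places them in a space controlling spatial decay.
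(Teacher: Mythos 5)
Your overall strategy (direct method, uniform a priori bound, Aubin--Lions compactness for the state, weak compactness for the controls, pass to the limit in the weak formulation) is the same as the paper's, and most of the steps are fine. However, the treatment of the nonlinearity has a genuine gap that your proposed fix does not close.

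You extract the controls as weak-$\star$ limits in $L^\infty$ (equivalently weak limits in $L^2_{\mathrm{loc}}$). This is sufficient to pass to the limit in the \emph{linear} terms $\theta^n m^n$ and $\alpha^n\cdot\nabla m^n$ by the standard weak--strong pairing, since $m^n\to m^*$ strongly in $L^2$. But it is not sufficient for $\sigma(\gamma^n m^n+\delta)$. Decomposing $\gamma^n m^n-\gamma^*m^*=\gamma^n(m^n-m^*)+(\gamma^n-\gamma^*)m^*$ as you do, the first piece tends to zero strongly, but the second tends to zero only weakly, so $\gamma^n m^n\rightharpoonup\gamma^* m^*$ and nothing more. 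A Lipschitz nonlinearity is not sequentially weakly continuous (take $f^n=\sin(nx)\rightharpoonup 0$ on an interval and $\sigma=|\cdot|$: $\sigma(f^n)\rightharpoonup 2/\pi\neq\sigma(0)$), so you cannot conclude $\sigma(\gamma^n m^n+\delta)\rightharpoonup\sigma(\gamma^*m^*+\delta)$ from weak convergence of the argument, and your appeal to ``the Lipschitz bound and the identification of the limit equation via a density/uniqueness argument'' is not a proof. What is needed is \emph{strong} $L^2$ convergence of $\gamma^n$. This is exactly what Assumption~\ref{asp:unbounded_para} hands you: the controls are uniformly bounded in $H^1_\omega(\mathbb{R}^d\times\mathbb{R})$, and the Adams compact embedding $H^1_\omega\hookrightarrow\hookrightarrow L^2$ then yields a subsequence with $\alpha^n,\beta^n,\gamma^n,\theta^n\to\alpha^*,\beta^*,\gamma^*,\theta^*$ \emph{strongly} in $L^2(\mathbb{R}^d\times[0,T])$. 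Once $\gamma^n\to\gamma^*$ strongly and $m^n\to m^*$ strongly, the Lipschitz bound
$\bigl|\int(\sigma(\gamma^n m^n+\delta)-\sigma(\gamma^*m^*+\delta))\varphi\bigr|\le\mathrm{Lip}(\sigma)\bigl(\|\gamma^n-\gamma^*\|_{L^2}\|m^n\|_{L^2}+\|\gamma^*\|_{L^2}\|m^n-m^*\|_{L^2}\bigr)\|\varphi\|_{L^\infty}$
closes the argument cleanly; this is precisely what the paper does. Note that you already use this $H^1_\omega$-compactness mechanism for $\beta^n$ (to get strong $L^2_{\mathrm{loc}}$ convergence for the principal term), but you overlook that it applies equally to $\gamma^n$ and that it is essential there. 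Using it uniformly for all four control families also simplifies the first-order term $\alpha^n\cdot\nabla m^n$ and avoids the integration by parts $\int m^n\,\mathrm{div}(\alpha^n\varphi)$, whose justification would require $\mathrm{div}\,\alpha^n$ controls that Assumption~\ref{asp:unbounded_para} does not give in $L^\infty$.

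A secondary, more minor point: your a priori bound in $L^\infty([0,T];L^2_\omega)\cap L^2([0,T];H^1_\omega)$ with $\partial_t m^n\in L^2([0,T];H^{-1}_\omega)$ does not give $C([0,T];L^2)$ compactness, only $L^2([0,T];L^2)$ compactness (and $C([0,T];L^2\text{-weak})$), so the claim that Aubin--Lions directly yields $m^n(\cdot,T)\to m^*(\cdot,T)$ in $L^2$ is not justified at that regularity. The paper obtains the stronger bound $m\in L^\infty([0,T];H^1_\omega)$, $\partial_t m\in L^2([0,T];L^2_\omega)$ (by also testing with $\partial_t m\,\omega$), which then does give $C([0,T];L^2)$ compactness via Aubin--Lions. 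Alternatively, one can rescue your version by using weak convergence $m^n(\cdot,T)\rightharpoonup m^*(\cdot,T)$ and weak lower semicontinuity of the $L^2$-norm to conclude $\|m^*(\cdot,T)-m_T\|^2\le\liminf\|m^n(\cdot,T)-m_T\|^2$, which combined with admissibility of $(\alpha^*,\beta^*,\gamma^*,\theta^*)$ gives equality; but that detour is worth spelling out rather than asserting $C([0,T];L^2)$ convergence.
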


\begin{remark}
Theorem~\ref{thm:exioptcontrol:unbounded} shows that the optimization problem~\eqref{PDE:unbounded}--\eqref{control:unbounded}, taken over parameters $\alpha_k,\, \beta_{kl},\, \gamma,\, \theta \in \mathcal{U}$ for all $k, l \in \{1, \ldots, d\}$, admits an infimum. Moreover, this infimum is attained for parameters $\alpha^*, \beta^*, \gamma^*, \theta^* \in L^2(\mathbb{R}^d \times [0,T])$.

\end{remark}
Before proceeding with the proof of Theorem \ref{thm:exioptcontrol:unbounded}, we first recall a lemma that guarantees the existence and uniqueness of solutions to the semilinear parabolic equations \eqref{PDE:unbounded} restricted on a bounded domain with homogeneous Dirichlet boundary condition. The lemma is Theorem 6, Theorem 9, and Remark (b) in Section 4, Chapter 7 of \cite{friedman2013partial}.

\begin{lem}\label{lem:wellposednessbounded}
Suppose that $\Omega$ is bounded with sufficiently smooth boundary $\partial\Omega$, and that the coefficients $\alpha_k, \beta_{kl}, \gamma, \theta \in C^{1,1}(\overline{\Omega} \times [0,T])$ for all $k, l \in \{1, \ldots, d\}$. Under Assumption~\ref{asp:unbounded_sigma}, if $m_0 \in C^{2+\delta}(\overline{\Omega})$ for some $0 < \delta < 1$, then there exists a unique solution $m$ to the equation~\eqref{PDE:unbounded} restricted to $\Omega$, subject to the homogeneous Dirichlet boundary condition. Moreover, this solution satisfies $m \in C^{2+\eta,1+\eta/2}_{x,t}(\Omega \times [0,T])$ for some $0 < \eta < 1$.

\end{lem}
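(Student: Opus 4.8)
The statement is the classical well-posedness and regularity theorem for semilinear uniformly parabolic Dirichlet problems, so the plan is to derive it from the standard linear parabolic theory ($L^p$ and Schauder estimates, as in \cite{friedman2013partial,friedman2013partial}) combined with a contraction argument for the nonlinearity. First I would rewrite the principal part in non-divergence form: since $\beta_{kl}\in C^{1,1}(\overline{\Omega}\times[0,T])$,
\[
-\sum_{k,l=1}^d \frac{\partial}{\partial x_k}\Bigl(\beta_{kl}\frac{\partial m}{\partial x_l}\Bigr)
= -\sum_{k,l=1}^d \beta_{kl}\frac{\partial^2 m}{\partial x_k\partial x_l}
  -\sum_{k,l=1}^d \Bigl(\frac{\partial \beta_{kl}}{\partial x_k}\Bigr)\frac{\partial m}{\partial x_l},
\]
so \eqref{PDE:unbounded} restricted to $\Omega$ becomes $\partial_t m + Lm = \sigma(\gamma m+\delta)$, where $L$ is a uniformly parabolic operator (by $bI\le\beta\le BI$) whose coefficients lie in $C^{0,1}(\overline{\Omega}\times[0,T])$, hence in $C^{\alpha,\alpha/2}$ for every $\alpha\in(0,1)$, with $m=0$ on $\partial\Omega\times[0,T]$ and $m(\cdot,0)=m_0$. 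I record here that the zeroth-order compatibility condition $m_0|_{\partial\Omega}=0$ is needed for continuity of the solution up to the corner $\partial\Omega\times\{0\}$; since the conclusion of the lemma is stated only on $\Omega\times[0,T]$ (interior in $x$), no higher-order corner compatibility will be required.

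For existence and uniqueness, I would set $Q_T:=\Omega\times[0,T]$ and define a map $\Phi\colon C(\overline{Q_T})\to C(\overline{Q_T})$ by letting $m=\Phi(w)$ be the unique solution of the \emph{linear} problem $\partial_t m + Lm = \sigma(\gamma w+\delta)$, $m(\cdot,0)=m_0$, $m|_{\partial\Omega}=0$. This is well defined: $w\mapsto\sigma(\gamma w+\delta)$ sends $C(\overline{Q_T})$ into $L^\infty(Q_T)$ (using $\gamma\in C^{1,1}$ and Assumption~\ref{asp:unbounded_sigma}), and linear parabolic $L^p$ theory gives a unique solution in $W^{2,1}_p(Q_T)\cap C(\overline{Q_T})$ for every $p<\infty$. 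Given $w_1,w_2$, the difference $m_1-m_2$ solves a linear problem with zero initial/boundary data and right-hand side $\sigma(\gamma w_1+\delta)-\sigma(\gamma w_2+\delta)$; absorbing the zeroth-order coefficient via the substitution $e^{-Dt}(m_1-m_2)$ (note $|\theta|\le D$) and invoking the parabolic maximum principle yields
\[
\|m_1(\cdot,t)-m_2(\cdot,t)\|_{L^\infty(\Omega)}\le K\int_0^t \|w_1(\cdot,s)-w_2(\cdot,s)\|_{L^\infty(\Omega)}\,ds,\qquad K:=e^{DT}\,\mathrm{Lip}(\sigma)\,\|\gamma\|_{L^\infty}.
\]
Iterating this inequality gives $\|\Phi^n(w_1)-\Phi^n(w_2)\|_{C(\overline{Q_T})}\le \frac{(KT)^n}{n!}\|w_1-w_2\|_{C(\overline{Q_T})}$, so $\Phi^n$ is a contraction for $n$ large; the Banach fixed point theorem for iterated maps produces a unique $m\in C(\overline{Q_T})$ with $\Phi(m)=m$, which is the desired solution, and applying the same estimate with $w_i=m_i$ together with Grönwall gives uniqueness of the PDE solution.

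For the regularity, I would bootstrap. Since $m=\Phi(m)$, automatically $m\in W^{2,1}_p(Q_T)$ for all $p<\infty$ because $g:=\sigma(\gamma m+\delta)\in L^\infty(Q_T)$; by the parabolic Sobolev embedding (taking $p$ large) $m\in C^{1+\eta_0,(1+\eta_0)/2}_{x,t}$ on compact subsets of $\Omega\times(0,T]$ for some $\eta_0\in(0,1)$. Then, since $\gamma\in C^{1,1}$ and $\sigma$ is Lipschitz, $\gamma m+\delta$ and hence $g=\sigma(\gamma m+\delta)$ belong to $C^{\eta_0,\eta_0/2}$ on the same sets, so the interior parabolic Schauder estimates applied to $\partial_t m+Lm=g$ (whose coefficients are in $C^{0,1}\subset C^{\eta_0,\eta_0/2}$) yield $m\in C^{2+\eta,1+\eta/2}_{x,t,\mathrm{loc}}(\Omega\times(0,T])$ with $\eta=\eta_0$; using $m_0\in C^{2+\delta}(\overline{\Omega})$ one extends these interior estimates up to $t=0$ away from $\partial\Omega$, giving exactly $m\in C^{2+\eta,1+\eta/2}_{x,t}(\Omega\times[0,T])$.

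The main obstacle is the interaction of the merely Lipschitz nonlinearity $\sigma$ with the Schauder scheme: one cannot feed $\sigma(\gamma m+\delta)$ directly into Schauder estimates, so the intermediate $L^p\to$ Hölder step is essential to first upgrade $m$ (and thus the source $g$) to a Hölder class before closing the loop; a secondary point is controlling the contraction constant for possibly large $T$, which is precisely what the passage to the iterate $\Phi^n$ resolves, and a third is the handling of the corner $\partial\Omega\times\{0\}$, which is avoided because the asserted regularity is interior in the space variable.
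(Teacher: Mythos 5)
The paper does not actually prove this lemma; it is quoted directly from Friedman \cite{friedman2013partial} (Theorem 6, Theorem 9, and Remark (b), Chapter 7, Section 4), so there is no internal argument to compare against. Your write-up is a correct self-contained reconstruction of the classical argument behind such a statement: convert to non-divergence form, solve the linearized equation and close a contraction (the iterated-map trick handles large $T$, and the $e^{-Dt}$ conjugation fixes the sign of the zeroth-order coefficient so the maximum principle applies), then bootstrap through $W^{2,1}_p\to$ parabolic Sobolev embedding $\to$ interior Schauder. You also correctly identify the subtlety that forces this ordering, namely that $\sigma$ is merely Lipschitz, so $\sigma(\gamma m+\delta)$ cannot be fed directly into Schauder until an $L^p$ step has first produced a H\"older modulus for $m$.

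Two points are worth flagging. First, your intermediate claim $m\in W^{2,1}_p(Q_T)$ up to the full parabolic boundary requires the zeroth-order compatibility $m_0|_{\partial\Omega}=0$; the lemma as stated does not impose it. This is ultimately harmless here because the asserted regularity $C^{2+\eta,1+\eta/2}_{x,t}(\Omega\times[0,T])$ is interior in $x$, so interior $L^p$ and Schauder estimates (which need no corner compatibility) suffice for the conclusion, and moreover in the only place the paper invokes the lemma the initial data are $\chi_N m_0$, which vanish identically near $\partial\mathcal{B}(0,N)$. Second, Friedman's cited theorems assume more regularity of the nonlinearity than mere Lipschitz continuity; your argument shows that the Lipschitz hypothesis in Assumption~\ref{asp:unbounded_sigma} already yields exactly the $C^{2+\eta,1+\eta/2}$ regularity claimed, which is a small but genuine clarification of what the citation delivers under the paper's actual hypotheses.
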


In this section, we consider the notion of a weak solution, defined as follows.

We say that a function \( m \in L^2([0,T]; H^1(\mathbb{R}^d)) \) with \( \partial_t m \in L^2([0,T]; H^{-1}(\mathbb{R}^d)) \) is a \emph{weak solution} of \eqref{PDE:unbounded} if:
\begin{itemize}
\item for any test function \( \varphi(x,t) \in C_c^\infty(\mathbb{R}^d\times [0,T] ) \),
\begin{align*}
	&\int_{\mathbb{R}^d\times [0,T]} \bigg[ \frac{\partial m}{\partial t}(x,t) 
	- \sum_{k,l=1}^d \frac{\partial}{\partial x_k} \left( \beta_{kl}(x,t) \frac{\partial}{\partial x_l}m(x,t) \right)
	+ \sum_{k=1}^d \alpha_k(x,t) \frac{\partial}{\partial x_k}m(x,t) \\
	&\quad + \theta(x,t) m(x,t) \bigg] \varphi(x,t) \, dx\, dt
	= \int_{\mathbb{R}^d\times [0,T]} \sigma \left( \gamma(x,t) m(x,t) + \delta(x,t) \right) \varphi(x,t) \, dx\, dt,
\end{align*}

\item \( m(x,0) = m_0(x) \).
\end{itemize}

In particular, given initial data \( m_0 \in H^1_\omega(\mathbb{R}^d) \), the following theorem shows that the solution \( m\) of \eqref{PDE:unbounded} lies in  
\[
L^\infty\big([0,T]; H^1_\omega(\mathbb{R}^d)\big), \quad \text{with } \partial_t m \in L^2\big([0,T]; L^2_\omega(\mathbb{R}^d)\big),
\]
which implies that the solution is continuous in time in the sense that  
\[
m \in C\big([0,T]; L^2_\omega(\mathbb{R}^d)\big).
\]

We then prove the following theorem, which guarantees the existence and uniqueness of weak solutions to \eqref{PDE:unbounded}.

\begin{thm}
Under Assumptions \ref{asp:unbounded_para} and \ref{asp:unbounded_sigma}, given initial data \(m_0 \in H^1_\omega(\mathbb{R}^d)\), the problem \eqref{PDE:unbounded} admits a unique weak solution \(m\) satisfying
\[
m \in L^\infty\big([0,T]; H^1_\omega(\mathbb{R}^d)\big), \quad \frac{\partial m}{\partial t} \in L^2\big([0,T]; L^2_\omega(\mathbb{R}^d)\big),
\]
with the following estimate:
\begin{equation}\label{est:unbounded_energy_est}
	\left\| m \right\|_{L^\infty([0,T];H^1_\omega(\mathbb{R}^d))}^2 + \left\| \frac{\partial m}{\partial t} \right\|_{L^2([0,T];L^2_\omega(\mathbb{R}^d))}^2 \leq \mathcal{C} \left\| m_0 \right\|_{H^1_\omega(\mathbb{R}^d)}^2,
\end{equation}
where the constant \(\mathcal{C}\) is independent of the parameters \(\alpha, \beta, \gamma, \theta\).

\end{thm}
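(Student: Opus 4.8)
The plan is a domain–exhaustion argument anchored on Lemma~\ref{lem:wellposednessbounded}, combined with two levels of \emph{weighted} energy estimates whose constants depend only on the structural constants $A,b,B,C,D,\mathrm{Lip}(\sigma),T,d,\lambda$ and \emph{not} on the particular choice of controls. First I would fix $R>0$, restrict the coefficients to $\overline{B_R}\times[0,T]$ (they lie in $C^{1,1}(\overline{B_R}\times[0,T])$ since they are $C^{1,1}_{\mathrm{loc}}$), and approximate $m_0$ by functions $m_0^R\in C^{2+\delta}(\overline{B_R})$ vanishing near $\partial B_R$ with $m_0^R\to m_0$ in $H^1_\omega(\mathbb{R}^d)$: mollify $m_0$ and multiply by a cutoff equal to $1$ on $B_{R/2}$ with $|\nabla\chi_R|\lesssim 1/R$, the cutoff error tending to $0$ because the $L^2_\omega$–tails of $m_0$ are small. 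Lemma~\ref{lem:wellposednessbounded} then produces a classical solution $m_R$ of the homogeneous Dirichlet problem on $B_R$, which I extend by $0$ to $\mathbb{R}^d$.

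\textbf{Weighted energy estimates.}
For the \emph{first} estimate I test against $\omega m_R$. Integration by parts (no boundary terms, since $m_R=0$ on $\partial B_R$) produces $\tfrac12\tfrac{d}{dt}\|m_R\|_{L^2_\omega}^2+\int\omega\,\nabla m_R^{\mathsf T}\beta\nabla m_R$ on the left; using ellipticity $bI\le\beta$, the pointwise bound $|\nabla\omega|\le C_\lambda\omega$ valid for $\omega=(1+|x|^2)^\lambda$, the bounds on $\alpha,\theta$, and $|\sigma(\gamma m_R+\delta)|=|\sigma(\gamma m_R+\delta)-\sigma(\delta)|\le\mathrm{Lip}(\sigma)\,C\,|m_R|$ (here Assumption~\ref{asp:unbounded_sigma} is essential), Young's inequality absorbs the gradient contributions into $\tfrac b2\|\nabla m_R\|_{L^2_\omega}^2$ and bounds the remainder by $C\|m_R\|_{L^2_\omega}^2$; Grönwall gives $\|m_R\|_{L^\infty([0,T];L^2_\omega)}^2+\int_0^T\|\nabla m_R\|_{L^2_\omega}^2\le \mathcal C_1\|m_0^R\|_{L^2_\omega}^2$, uniformly in $R$ and in the controls. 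For the \emph{second} estimate I test against $\omega\,\partial_t m_R$: the principal part contributes $\tfrac12\tfrac{d}{dt}\int\omega\,\nabla m_R^{\mathsf T}\beta\nabla m_R$ minus $\tfrac12\int\omega\,\nabla m_R^{\mathsf T}(\partial_t\beta)\nabla m_R$ — controlled because $\partial_t\beta_{kl}\in L^\infty$ by Assumption~\ref{asp:unbounded_para} — plus a weight–commutator term $\int(\partial_k\omega)\beta_{kl}\,\partial_l m_R\,\partial_t m_R$; these, together with the first–order, zeroth–order and nonlinear terms, are all dominated by $\tfrac12\int\omega(\partial_t m_R)^2$ (absorbed to the left) plus multiples of $\int\omega|\nabla m_R|^2$ and $\int\omega m_R^2$. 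Integrating in time and feeding in the first estimate yields the control–independent bound
\[
\|m_R\|_{L^\infty([0,T];H^1_\omega)}^2+\left\|\partial_t m_R\right\|_{L^2([0,T];L^2_\omega)}^2\le \mathcal C\,\|m_0^R\|_{H^1_\omega}^2 .
\]
The testing with $\omega\,\partial_t m_R$ is made rigorous by replacing $\partial_t m_R$ with Steklov time averages and passing to the limit, using the regularity of $m_R$ from Lemma~\ref{lem:wellposednessbounded}.

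\textbf{Passage to the limit and uniqueness.}
From the uniform bounds I extract $m_R\overset{*}{\rightharpoonup}m$ in $L^\infty([0,T];H^1_\omega)$ and $\partial_t m_R\rightharpoonup\partial_t m$ in $L^2([0,T];L^2_\omega)$. Since $L^2_\omega\hookrightarrow L^2$ and, by \eqref{eqn:compact_embedding}, $H^1_\omega(\mathbb{R}^d)\hookrightarrow\hookrightarrow L^2(\mathbb{R}^d)$, the Aubin–Lions–Simon lemma gives strong convergence $m_R\to m$ in $C([0,T];L^2(\mathbb{R}^d))$. Together with $|\gamma|\le C$ and the Lipschitz property of $\sigma$ this passes the nonlinear term to the limit, while the weak convergences of $\nabla m_R$ and $\partial_t m_R$ handle the linear terms in the (distributional) weak formulation against fixed compactly supported test functions; $m_R(\cdot,0)=m_0^R\to m_0$ combined with $m_R\to m$ in $C([0,T];L^2)$ identifies $m(\cdot,0)=m_0$. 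Weak/weak-$*$ lower semicontinuity of norms and $m_0^R\to m_0$ in $H^1_\omega$ transfer the estimate to $m$, giving \eqref{est:unbounded_energy_est} with the same $\mathcal C$. Uniqueness (whence convergence of the whole family $\{m_R\}$) follows from the first estimate applied to $w=m_1-m_2$, the difference of two solutions in the stated class: $w(\cdot,0)=0$ and $w$ solves a linear equation with source $\sigma(\gamma m_1+\delta)-\sigma(\gamma m_2+\delta)$ bounded pointwise by $\mathrm{Lip}(\sigma)C|w|$, so $\tfrac{d}{dt}\|w\|_{L^2_\omega}^2\le C\|w\|_{L^2_\omega}^2$ and $w\equiv0$ by Grönwall. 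The same stability estimate would also let one define $m$ for general $m_0\in H^1_\omega$ by density, bypassing the approximation of $m_0$ above.

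\textbf{Main obstacle.}
The crux is the \emph{second} energy estimate: one must verify that every term generated by the weight (through $\nabla\omega$) and by the time dependence of $\beta$ (through $\partial_t\beta$) is absorbable, and — crucially for the later optimal–control theorem — that the resulting constant $\mathcal C$ is genuinely independent of the controls; the uniform ellipticity $bI\le\beta$, the bound $|\nabla\omega|\le C_\lambda\omega$, the $L^\infty$ bound on $\partial_t\beta$, and the normalization $\sigma(\delta)=0$ are precisely what make this go through. A secondary technical point, already flagged, is the rigorous justification of testing with $\omega\,\partial_t m_R$, handled by time regularization.
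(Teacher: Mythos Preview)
Your proposal is correct and follows essentially the same approach as the paper: domain exhaustion on balls with cutoff initial data, two weighted energy estimates (testing against $\omega m_R$ and $\omega\,\partial_t m_R$, the latter using the $L^\infty$ bound on $\partial_t\beta$ and $|\nabla\omega|\lesssim\omega$), Aubin--Lions compactness via \eqref{eqn:compact_embedding}, passage to the limit in the weak formulation, and uniqueness by Gr\"onwall on the difference. The only minor deviation is that you mollify $m_0$ to apply Lemma~\ref{lem:wellposednessbounded} verbatim, whereas the paper instead relaxes that lemma (citing Evans) to accept $H^1_0$ data directly; both are fine.
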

\begin{proof}

Define
\[
\mathcal{B}(0,N) = \{ x \in \mathbb{R}^d \mid |x| < N \},
\]
and consider cutoff functions \(\chi_N \in C_c^\infty(\mathbb{R}^d)\) satisfying:
\begin{itemize}
	\item \(\chi_N = 1\) for \(|x| \leq N-1\);
	\item \(\chi_N = 0\) for \(|x| \geq N\);
	\item \(0 \leq \chi_N \leq 1\), with \(|\nabla \chi_N| \leq \mathcal{C}_1\),
\end{itemize}
for some constant $\mathcal{C}_1>0$ independent of $N$.

We consider the following sequence of equations on \(\mathcal{B}(0,N)\):
\begin{equation}\label{PDE:bounded_onball}
	\begin{dcases}
		\displaystyle
		\frac{\partial m_N}{\partial t}(x,t) 
		- \sum_{k,l=1}^d \frac{\partial}{\partial x_k} \left( \beta_{kl}(x,t) \frac{\partial}{\partial x_l} m_N(x,t) \right) 
		+ \sum_{k=1}^d \alpha_k(x,t) \frac{\partial}{\partial x_k} m_N(x,t) 
		+ \theta(x,t) m_N(x,t) \\
		\qquad = \sigma \left( \gamma(x,t) m_N(x,t) + \delta(x,t) \right) \quad \text{on } \mathcal{B}(0,N)\times [0,T], \\
		m_N(x,0) = \chi_N m_0(x) \quad \text{on } \mathcal{B}(0,N), \\
		m_N(x,t) = 0 \quad \text{on } \partial \mathcal{B}(0,N) \times [0,T].
	\end{dcases}
\end{equation}

Note that if we are only interested in weak solutions, the assumptions in Lemma \ref{lem:wellposednessbounded} can be relaxed. In particular, following the general strategy used in the proof of Theorem 5 in \cite[Section~7.1]{EvansPDE}, the equations \eqref{PDE:bounded_onball} are well-posed with  
\[
m_N \in L^2([0,T]; H^2(\mathcal{B}(0,N))) \cap L^\infty([0,T]; H^1_0(\mathcal{B}(0,N))), \quad \partial_t m_N \in L^2([0,T]; L^2(\mathcal{B}(0,N)))
\]
for initial data \( \chi_N m_0 \in H^1_0(\mathcal{B}(0,N)) \), which is guaranteed by the assumption \( m_0 \in H^1_\omega(\mathbb{R}^d) \).

Thanks to the available regularity, and since $\omega$ is bounded on the bounded domain, we can multiply both sides of \eqref{PDE:bounded_onball} by \( m_N \omega \) and integrating over \( \mathcal{B}(0,N) \), as follows
\begin{align*}
	\int_{\mathcal{B}(0,N)} \frac{\partial m_N}{\partial t} m_N \, \omega \, dx 
	&= \int_{\mathcal{B}(0,N)} \bigg[ \nabla \cdot (\beta \nabla m_N) m_N - \alpha \cdot \nabla m_N m_N - \theta m_N^2 + \sigma(\gamma m_N + \delta) m_N \bigg] \omega \, dx \\
	&= \int_{\mathcal{B}(0,N)} \bigg[ -(\nabla m_N)^T \beta \nabla m_N \, \omega - (\nabla \omega)^T \beta \nabla m_N \, m_N 
	- \alpha \cdot \nabla m_N m_N \, \omega \\
	&\quad - \theta m_N^2 \omega + \sigma(\gamma m_N + \delta) m_N \omega \bigg] dx.
\end{align*}

Thanks to the uniform ellipticity condition \( bI \leq \beta(x,t) \leq BI \), we obtain the following estimates:
\begin{equation}\label{eqn:est_for_nabla_omega}
	\nabla \omega \cdot (\beta \nabla m_N) \leq |\nabla \omega| \, |\beta \nabla m_N| \leq\mathcal{C}_2 |\nabla m_N| \, \omega,
\end{equation}
% and
% \[
% \nabla \omega \cdot \alpha \leq |\nabla \omega| \, |\alpha| \leq \mathcal{C}_1 \, \omega,
% \]
where \(\mathcal {C}_2 > 0\) is a generic constant depending on the bounds of \(\beta\) and the weight \(\omega\).

Hence,
\begin{align*}
	\frac{d}{dt} \left\| m_N(t) \right\|^2_{L^2_\omega(\mathcal{B}(0,N))} 
	&= 2 \int_{\mathcal{B}(0,N)} m_N \frac{\partial m_N}{\partial t} \, \omega \, dx \\
	&\le 2 \int_{\mathcal{B}(0,N)} \left[ 
	- (\nabla m_N)^T \beta \nabla m_N \, \omega 
	+ \mathcal{C}_3 |\nabla m_N||m_N| \omega 
	+ \mathcal{C}_4 |m_N|^2 \omega 
	\right] dx \\
	&\le -\mathcal{C}_5 \left\| \nabla m_N \right\|^2_{L^2_\omega(\mathcal{B}(0,N))} 
	+ \mathcal{C}_6 \left\| m_N \right\|^2_{L^2_\omega(\mathcal{B}(0,N))},
\end{align*}
where $\mathcal{C}_5,\mathcal{C}_6$ are generic constants. 

Therefore,
\[
\frac{d}{dt}\left(e^{-\mathcal{C}_6 t} \left\| m_N(t) \right\|^2_{L^2_\omega(\mathcal{B}(0,N))} \right) 
\le -\mathcal{C}_5 e^{-\mathcal{C}_6 t} \left\| \nabla m_N(t) \right\|^2_{L^2_\omega(\mathcal{B}(0,N))},
\]
which implies
\[
\left\| m_N(t) \right\|^2_{L^2_\omega(\mathcal{B}(0,N))} 
\le e^{\mathcal{C}_6 t} \left\| m_N(0) \right\|^2_{L^2_\omega(\mathcal{B}(0,N))} 
- \mathcal{C}_5 e^{\mathcal{C}_6 t} \int_0^t e^{-\mathcal{C}_6 s} \left\| \nabla m_N(s) \right\|^2_{L^2_\omega(\mathcal{B}(0,N))} \, ds,
\]
for all \( 0 \le t \le T \).

Since the time interval \([0,T]\) is finite, we obtain the following uniform energy estimate:
\begin{equation}\label{est:bounded_ball_energy_est}
	\left\| m_N \right\|_{L^{\infty}([0,T]; L^2_{\omega}(\mathcal{B}(0,N)))}^2 +
	\left\| \nabla m_N \right\|_{L^2([0,T]; L^2_{\omega}(\mathcal{B}(0,N)))}^2 
	\le \mathcal{C}_7 \left\| m_0 \right\|_{L^2_\omega(\mathcal{B}(0,N))}^2,
\end{equation}
where the constant \( \mathcal{C}_7 \) depends only on \( T \), the bounds on the parameters,  but is independent of \( N \), \( \alpha \), \( \beta \), \( \gamma \), and \( \theta \).

With the given regularity, we can also multiply both sides of \eqref{PDE:bounded_onball} by \( \frac{\partial m_N}{\partial t} \, \omega \) and integrate over \( \mathcal{B}(0,N) \) as follows:
\begin{align*}
	\int_{\mathcal{B}(0,N)} \frac{\partial m_N}{\partial t} \frac{\partial m_N}{\partial t} \, \omega \, dx 
	&= \int_{\mathcal{B}(0,N)} \bigg[ \nabla \cdot (\beta \nabla m_N) \frac{\partial m_N}{\partial t} - \alpha \cdot \nabla m_N \frac{\partial m_N}{\partial t} - \theta m_N \frac{\partial m_N}{\partial t} \\
	& \quad + \sigma(\gamma m_N + \delta) \frac{\partial m_N}{\partial t} \bigg] \omega \, dx \\
	&= \int_{\mathcal{B}(0,N)} -(\nabla \frac{\partial m_N}{\partial t})^T \beta \nabla m_N \, \omega \, dx + \int_{\mathcal{B}(0,N)} \bigg[ - (\nabla \omega)^T \beta \nabla m_N \frac{\partial m_N}{\partial t} \\
	& \quad - \alpha \cdot \nabla m_N \frac{\partial m_N}{\partial t} \, \omega - \theta m_N \frac{\partial m_N}{\partial t} \omega + \sigma(\gamma m_N + \delta) \frac{\partial m_N}{\partial t} \omega \bigg] dx \\
	&=: \mathcal{I}_1 + \mathcal{I}_2.
\end{align*}

For \(\mathcal{I}_1\), note that
\[
\frac{\partial}{\partial t} \int_{\mathcal{B}(0,N)} (\nabla m_N)^T \beta \nabla m_N \, \omega \, dx 
= \int_{\mathcal{B}(0,N)} (\nabla m_N)^T \frac{\partial \beta}{\partial t} \nabla m_N \, \omega \, dx 
+ 2 \int_{\mathcal{B}(0,N)} \left(\nabla \frac{\partial m_N}{\partial t}\right)^T \beta \nabla m_N \, \omega \, dx.
\]

Hence,
\[
\mathcal{I}_1
= \frac{1}{2} \int_{\mathcal{B}(0,N)} (\nabla m_N)^T \frac{\partial \beta}{\partial t} \nabla m_N \, \omega \, dx 
- \frac{1}{2} \frac{\partial}{\partial t} \int_{\mathcal{B}(0,N)} (\nabla m_N)^T \beta \nabla m_N \, \omega \, dx,
\]
where
\[
\int_{\mathcal{B}(0,N)} (\nabla m_N)^T \frac{\partial \beta}{\partial t} \nabla m_N \, \omega \, dx \leq \mathcal{C}_8 \left\| \nabla m_N \right\|_{L^2_\omega(\mathcal{B}(0,N))}^2.
\]

In the above inequality $\mathcal{C}_8$ is a generic constant. 

For \(\mathcal{I}_2\), using \eqref{eqn:est_for_nabla_omega}, we obtain the following estimate for \(\mathcal{I}_2\):

\[
\mathcal{I}_2 \leq \frac{1}{2} \left\| \frac{\partial m_N}{\partial t} \right\|_{L^2_\omega(\mathcal{B}(0,N))}^2 + \mathcal{C}_9 \left\| m_N \right\|_{H^1_\omega(\mathcal{B}(0,N))}^2,
\]
where \(\mathcal{C}_9\) is a generic constant.

After combining the estimates for \(\mathcal{I}_1\) and \(\mathcal{I}_2\), we obtain
\[
\frac{1}{2} \left\| \frac{\partial m_N}{\partial t} \right\|_{L^2_\omega(\mathcal{B}(0,N))}^2 + \frac{1}{2} \frac{\partial}{\partial t} \int_{\mathcal{B}(0,N)} (\nabla m_N)^T \beta \nabla m_N \, \omega \, dx \leq \mathcal{C}_{10} \left\| m_N \right\|_{H^1_\omega(\mathcal{B}(0,N))}^2.
\]

Thus,
\begin{align*}
	&\min\{1,b\} \left( \left\| \frac{\partial m_N}{\partial t} \right\|_{L^2([0,T];L^2_\omega(\mathcal{B}(0,N)))}^2 + \left\| m_N \right\|_{L^\infty([0,T];H^1_\omega(\mathcal{B}(0,N)))}^2 \right) \\
	\leq\ & \int_0^T \left\| \frac{\partial m_N}{\partial t} \right\|_{L^2_\omega(\mathcal{B}(0,N))}^2 \, dt + \sup_{0 \leq t \leq T} \int_{\mathcal{B}(0,N)} (\nabla m_N)^T \beta \nabla m_N \, \omega \, dx \\
	\leq\ & \int_{\mathcal{B}(0,N)} (\nabla m_N(x,0))^T \beta \nabla m_N(x,0) \, \omega \, dx + 2 \mathcal{C}_{10} \int_0^T \left\| m_N \right\|_{H^1_\omega(\mathcal{B}(0,N))}^2 \, dt \\
	\leq\ & B \left\| \nabla \left( \chi_N m_0(x) \right) \right\|_{L^2_\omega(\mathcal{B}(0,N))}^2 + 2 \mathcal{C}_{10} \left\| m_N \right\|_{L^2([0,T];H^1_\omega(\mathcal{B}(0,N)))}^2,
\end{align*}
where \eqref{est:bounded_ball_energy_est} implies
\[
\left\| \frac{\partial m_N}{\partial t} \right\|_{L^2([0,T];L^2_\omega(\mathcal{B}(0,N)))}^2 + \left\| m_N \right\|_{L^\infty([0,T];H^1_\omega(\mathcal{B}(0,N)))}^2 \leq \mathcal{C}_{11} \left\| m_0 \right\|_{H^1_\omega(\mathcal{B}(0,N))}^2.
\]
The constants \(\mathcal{C}_{10}\) and \(\mathcal{C}_{11}\) depend only on \(T\) and the bounds on the parameters, but are independent of \(N\), \(\alpha\), \(\beta\), \(\gamma\), and \(\theta\).

After extending $m_N$ to the whole domain $\mathbb{R}^d$, we have that
\begin{equation}\label{est:bounded_ball_energy_est_improved}
	\left\|  \frac{\partial m_N}{\partial t}\right\|_{L^2([0,T];L^2_\omega(\mathbb{R}^d))}^2+\left\|  m_N\right\|_{L^\infty([0,T];H^1_\omega(\mathbb{R}^d))}^2 \le \mathcal{C}_{11} \left\| m_0 \right\|_{H^1_\omega(\mathbb{R}^d)}^2.
\end{equation}

Note that \eqref{est:bounded_ball_energy_est_improved} implies the boundness of $\partial_t m_N$ in $L^2([0,T];L^2(\mathbb{R}^d))$ and the boundness of $m_N$ in $L^\infty([0,T];H^1_\omega(\mathbb{R}^d))$. Thus by the Aubin–Lions lemma \cite{boyer2012mathematical}, the compact embedding by \eqref{eqn:compact_embedding} 
\[
H^1_\omega(\mathbb{R}^d) \hookrightarrow\hookrightarrow L^2(\mathbb{R}^d),
\]
implies that the sequence \( m_N \) is relatively compact in $C([0,T]; L^2(\mathbb{R}^d))$. Then we can extract a subsequence \( m_{N_j} \) (still denoted by \( m_N \) for simplicity), such that
\[
\left\{\begin{aligned}
	m_N &\to m_* &&\quad \text{in } C([0,T]; L^2(\mathbb{R}^d)), \\
	m_N &\overset{*}{\rightharpoonup} m_* &&\quad \text{in } L^\infty([0,T]; H^1_\omega(\mathbb{R}^d)), \\
	m_N &\rightharpoonup m_* &&\quad \text{in } L^2([0,T]; H^1_\omega(\mathbb{R}^d)), \\
	\frac{\partial m_N}{\partial t} &\rightharpoonup \frac{\partial m_*}{\partial t} &&\quad \text{in } L^2([0,T]; L^2_\omega(\mathbb{R}^d)).
\end{aligned}
\right.
\]

% For any \( \varphi \in C^\infty_c([0,T]\times\mathbb{R}^d) \), we have
% \begin{align*}
	%     &\left| \int_{[0,T]\times\mathbb{R}^d} \left[ \sigma \left( \gamma m_N + \delta \right) - \sigma \left( \gamma m_* + \delta \right) \right] \varphi \, dx\,dt \right|\\
	% \le & \, \text{Lip}(\sigma) C \left\| m_N - m_* \right\|_{L^\infty([0,T];L^2(\mathbb{R}^d))} \left\| \varphi \right\|_{L^1([0,T];L^2(\mathbb{R}^d))} \rightarrow 0,
	% \end{align*}
% \[
% \left| \int_{[0,T]\times\mathbb{R}^d} \left[ \sigma \left( \gamma m_N + \delta \right) - \sigma \left( \gamma m_* + \delta \right) \right] \varphi \, dx\,dt \right|
% \le \text{Lip}(\sigma) C \left\| m_N - m_* \right\|_{L^\infty([0,T];L^2(\mathbb{R}^d))} \left\| v \right\|_{L^1([0,T];L^2(\mathbb{R}^d))} \rightarrow 0,
% \]
% it follows that
% \[
% \sigma \left( \gamma m_N + \delta \right) \rightharpoonup \sigma \left( \gamma m_* + \delta \right)
% \quad \text{in } L^2([0,T]; L^2(\mathbb{R}^d)).
% \]

For any test function \( \varphi \in C^\infty_c(\mathbb{R}^d\times [0,T]) \), there exists a sufficiently large ball \( B(0,N') \) such that $B(0,N')\times [0,T]  $ contains the support of \( \varphi \). For all $N\ge N'$, since
% Therefore, for $N\ge N'$, we can consider the extension of \( m_N \) to the whole space \( \mathbb{R}^d \), still denoted by \( m_N \), and since 
\begin{align*}
	&\left| \int_{\mathbb{R}^d\times [0,T]} \left[ \sigma \left( \gamma m_N + \delta \right) - \sigma \left( \gamma m_* + \delta \right) \right] \varphi \, dx\,dt \right|\\
	\le & \, \text{Lip}(\sigma) C \left\| m_N - m_* \right\|_{L^\infty([0,T];L^2(\mathbb{R}^d))} \left\| \varphi \right\|_{L^1([0,T];L^2(\mathbb{R}^d))} \rightarrow 0,
\end{align*}
we obtain
\begin{equation}\label{eqn:weakform_m_n_ball}
	\int_{\mathbb{R}^d\times [0,T]} \frac{\partial m_N}{\partial t} \varphi 
	+ \left[ -\nabla \left( \beta \nabla m_N \right) + \alpha\cdot \nabla m_N + \theta m_N \right] \varphi \, dx \, dt 
	= \int_{\mathbb{R}^d\times [0,T]} \sigma \left( \gamma m_N + \delta \right) \varphi \, dx \, dt.
\end{equation}

Letting \( N \to \infty \), we obtain
\begin{equation}\label{eqn:weakform_m_*_ball}
	\int_{\mathbb{R}^d\times [0,T]} \frac{\partial m_*}{\partial t} \varphi
	+ \left[-\nabla \left( \beta \nabla m_* \right) + \alpha\cdot \nabla m_* + \theta m_* \right] \varphi \, dx \, dt
	= \int_{\mathbb{R}^d\times [0,T]} \sigma \left( \gamma m_* + \delta \right) \varphi \, dx \, dt.
\end{equation}

Suppose \(\varphi(x,T) = 0\). By integrating by parts in \eqref{eqn:weakform_m_n_ball} and \eqref{eqn:weakform_m_*_ball}, we obtain
\[
\int_{\mathbb{R}^d} m_N(x,0) \varphi(x,0) \, dx = \int_0^T \int_{\mathbb{R}^d} \left[ -m_N \frac{\partial \varphi}{\partial t} + \left(-\nabla \left(\beta \nabla m_N \right) + \alpha\cdot \nabla  m_N + \theta m_N - \sigma(\gamma m_N + \delta) \right) \varphi \right] dx \, dt,
\]
and similarly,
\[
\int_{\mathbb{R}^d} m_*(x,0) \varphi(x,0) \, dx = \int_0^T \int_{\mathbb{R}^d} \left[ -m_* \frac{\partial \varphi}{\partial t} + \left(-\nabla \left(\beta \nabla m_* \right) + \alpha\cdot \nabla m_* + \theta m_* - \sigma(\gamma m_* + \delta) \right) \varphi \right] dx \, dt.
\]

Let \(N \to \infty\), then
\[
\int_{\mathbb{R}^d} m_N(x,0) \varphi(x,0) \, dx \to \int_{\mathbb{R}^d} m_*(x,0) \varphi(x,0) \, dx.
\]

Note that since $\text{supp}(\varphi)\subset B(0,N')$, we shall have
\[
\int_{\mathbb{R}^d} m_N(x,0) \varphi(x,0) \, dx \to \int_{\mathbb{R}^d} \chi_{N'+1} m_0(x) \varphi(x,0) \, dx= \int_{\mathbb{R}^d} m_0(x) \varphi(x,0) \, dx.
\]

By the arbitrariness of \(\varphi(x,0)\), we conclude
\[
m_*(x,0) = m_0(x).
\]

As a consequence, \(m_*\) is a weak solution of \eqref{PDE:unbounded} with
\[
m_* \in L^\infty\big([0,T]; H^1_\omega(\mathbb{R}^d)\big), \frac{\partial}{\partial t} m_* \in L^2\big([0,T]; L^2_\omega(\mathbb{R}^d)\big).
\]

Thus, the existence of the solution is established.

Estimate \eqref{est:unbounded_energy_est} follows from \eqref{est:bounded_ball_energy_est_improved},
% Combining \eqref{est:bounded_ball_energy_est} and \eqref{est:bounede_ball_H-1_est}, we obtain the following estimate for the solution of \eqref{PDE:unbounded}:
% \begin{equation}\label{est:unbounded_energy_est}
	% 	\left\| m \right\| _{L_{t}^{\infty}L_{\omega,x}^{2}(\mathbb{R}^d)}^{2} +
	% 	\left\| \nabla m \right\| _{L_{t}^{2}L_{\omega,x}^{2}(\mathbb{R}^d)}^{2} +
	% 	\left\| \frac{d}{dt}m \right\| _{L_{t}^{2}H_{x}^{-1}(\mathbb{R}^d)}^{2}
	% 	\le \mathcal{C}' \left\| m_0 \right\| _{L^2_\omega(\mathbb{R}^d)}^{2},
	% \end{equation}
which also implies the uniqueness of the solution.
% For minimizer, since now our solution would be continuous wrt to time , $m(T)$ is well-defined, and boundess of the parameters will permit the convergence.
\end{proof}
% \begin{remark}\label{rmk:unbounded_energy_est}
% Under \eqref{cons:unbounded}, the constant $\mathcal{C}'$ in \eqref{est:unbounded_energy_est} is independent of the parameters $\alpha, \beta, \gamma, \theta$.
% \end{remark}

Now we are ready to prove Theorem \ref{thm:exioptcontrol:unbounded}.

\begin{proof}[Proof of Theorem \ref{thm:exioptcontrol:unbounded}]
Let $\{\alpha^n,\beta^n,\gamma^n,\theta^n\}$ be a minimizing sequence such that
\[
\left\|m_{\alpha^n,\beta^n,\gamma^n,\theta^n}(\cdot,T)-m_T(\cdot)\right\|^2 \to \inf_{\alpha,\beta,\gamma,\theta \in \mathcal{U}} \left\|m(\cdot,T)-m_T(\cdot)\right\|^2.
\]

% \begin{equation}  
	% 	J\left( \alpha ^n,\beta 
	%           ^n,\gamma ^n \right) \rightarrow \underset{\alpha ,\beta ,\gamma \in U_{ad}}{\mathrm{inf}}J\left( \alpha ,\beta ,\gamma \right). 
	% \end{equation}

By \eqref{eqn:compact_embedding} and Assumption \ref{asp:unbounded_para}, these parameter sequences admit subsequences that converge in \(L^2(\mathbb{R}^d \times [0,T])\). By iteratively extracting subsequences, we obtain a subsequence \(\{\alpha^{n_j}, \beta^{n_j}, \gamma^{n_j}, \theta^{n_j}\}\) that converges, and we denote its limit by \(\{\alpha^*, \beta^*, \gamma^*, \theta^*\}\).

Thanks to \eqref{est:unbounded_energy_est}, the corresponding sequence of solutions
\[
m_{\alpha^{n_j}, \beta^{n_j}, \gamma^{n_j}, \theta^{n_j}} \in L^\infty([0,T]; H^1_\omega(\mathbb{R}^d)), \quad \frac{d}{dt} m_{\alpha^{n_j}, \beta^{n_j}, \gamma^{n_j}, \theta^{n_j}} \in L^2([0,T]; L^2_\omega(\mathbb{R}^d)),
\]
satisfies the uniform bounds
\[
\left\| m_{\alpha^{n_j}, \beta^{n_j}, \gamma^{n_j}, \theta^{n_j}} \right\|_{L^\infty([0,T]; H^1_\omega(\mathbb{R}^d))}^2 \leq \mathcal{C}_1, \quad
\left\| \frac{d}{dt} m_{\alpha^{n_j}, \beta^{n_j}, \gamma^{n_j}, \theta^{n_j}} \right\|_{L^2([0,T]; L^2_\omega(\mathbb{R}^d))}^2 \leq \mathcal{C}_2,
\]
for some universal constants \(\mathcal{C}_1, \mathcal{C}_2 > 0\).

Using the compact embedding \(H^1_\omega (\mathbb{R}^d)\hookrightarrow\hookrightarrow L^2(\mathbb{R}^d)\), we apply the Aubin-Lions lemma \cite{boyer2012mathematical}. By iteratively extracting subsequences, we obtain a further subsequence such that

\[
\left\{\begin{aligned}
	m_{\alpha^{n_k}, \beta^{n_k}, \gamma^{n_k}, \theta^{n_k}}  &\to m_* &&\quad \text{in } C([0,T]; L^2(\mathbb{R}^d)), \\
	m_{\alpha^{n_k}, \beta^{n_k}, \gamma^{n_k}, \theta^{n_k}}  &\overset{*}{\rightharpoonup} m_* &&\quad \text{in } L^\infty([0,T]; H^1_\omega(\mathbb{R}^d)), \\
	m_{\alpha^{n_k}, \beta^{n_k}, \gamma^{n_k}, \theta^{n_k}} &\rightharpoonup m_* &&\quad \text{in } L^2([0,T]; H^1_\omega(\mathbb{R}^d)), \\
	\frac{\partial }{\partial t} m_{\alpha^{n_k}, \beta^{n_k}, \gamma^{n_k}, \theta^{n_k}}  &\rightharpoonup \frac{\partial m_*}{\partial t} &&\quad \text{in } L^2([0,T]; L^2_\omega(\mathbb{R}^d)).
\end{aligned}
\right.
\]

For simplicity, we still denote the subsequence \(\{\alpha^{n_k}, \beta^{n_k}, \gamma^{n_k}, \theta^{n_k}\}\) as \(\{\alpha^n, \beta^n, \gamma^n, \theta^n\}\), and set 
\(m_{\alpha^{n_k}, \beta^{n_k}, \gamma^{n_k}, \theta^{n_k}} = m_n\). Then \(m_n\) satisfies the following equation:
\[
\left\{
\begin{aligned}
	\frac{\partial m_n}{\partial t}(x,t) 
	&- \nabla \cdot \left( \beta^n \nabla m_n \right) 
	+ \alpha^n \cdot \nabla m_n + \theta^n m_n = \sigma \left( \gamma^n m_n + \delta \right) 
	&& \text{on } \mathbb{R}^d\times [0,T], \\
	m_n(x,0) &= m_0(x) && \text{on } \mathbb{R}^d,
\end{aligned}
\right.
\]
with the following convergence properties, which are weaker than those established earlier but nevertheless sufficient for the analysis below

\[
\left\{
\begin{aligned}
	m_n &\to m_* && \text{in } C([0,T]; L^2(\mathbb{R}^d)), \\
	m_n &\rightharpoonup m_* && \text{in } L^2([0,T]; H^1(\mathbb{R}^d)), \\
	\frac{\partial m_n}{\partial t} &\rightharpoonup \frac{\partial m_*}{\partial t} && \text{in } L^2([0,T]; L^2(\mathbb{R}^d)).
\end{aligned}
\right.
\]

For any \(\varphi \in C_c^\infty(\mathbb{R}^d\times [0,T])\), as \(n \to \infty\), we have
\begin{align*}
	&\left| \int_0^T \int_{\mathbb{R}^d} \big(\nabla \cdot (\beta^n \nabla m_n) - \nabla \cdot (\beta^* \nabla m_*) \big) \varphi \, dx dt \right| \\
	&= \left| \int_0^T \int_{\mathbb{R}^d} (\beta^n \nabla m_n - \beta^* \nabla m_*) \cdot \nabla \varphi \, dx dt \right| \\
	&\leq \|\beta^n - \beta^*\|_{L^2_{x,t}} \|\nabla m_n\|_{L^2_{x,t}} \|\nabla \varphi\|_{L^\infty_{x,t}} + \left| \int_0^T \int_{\mathbb{R}^d} (\nabla m_n - \nabla m_*) \cdot (\beta^* \nabla \varphi) \, dx dt \right| \to 0,
\end{align*}
since \(\beta^n \to \beta^*\) strongly in \(L^2_{x,t}\) and \(\nabla m_n \rightharpoonup \nabla m_*\) weakly in \(L^2_{x,t}\).

Similarly,
\begin{align*}
	&\left| \int_0^T \int_{\mathbb{R}^d} \big(\alpha^n \cdot \nabla m_n - \alpha^*\cdot\nabla m_* \big) \varphi \, dx dt \right| \\
	% &= \left| \int_0^T \int_{\mathbb{R}^d} (\alpha^n m_n - \alpha^* m_*) \cdot \nabla \varphi \, dx dt \right| \\
	&\leq  \|\alpha^n - \alpha^*\|_{L^2_{x,t}} \|\nabla m_n\|_{L^2_{x,t}}\|\varphi\|_{L^\infty_{x,t}} + \left| \int_0^T \int_{\mathbb{R}^d} (\nabla m_n - \nabla m_*) \cdot (\alpha^* \varphi) \, dx dt \right| \to 0,
\end{align*}
and
\[
\left| \int_0^T \int_{\mathbb{R}^d} (\theta^n m_n - \theta^* m_*) \varphi \, dx dt \right| \leq \left( \|\theta^n - \theta^*\|_{L^2_{x,t}} \|m_n\|_{L^2_{x,t}} + \|\theta^*\|_{L^2_{x,t}} \|m_n - m_*\|_{L^2_{x,t}} \right) \|\varphi\|_{L^\infty_{x,t}} \to 0.
\]

Finally, using the Lipschitz continuity of \(\sigma\),
\begin{align*}
	&\left| \int_0^T \int_{\mathbb{R}^d} \big(\sigma(\gamma^n m_n + \delta) - \sigma(\gamma^* m_* + \delta)\big) \varphi \, dx dt \right| \\
	&\leq \mathrm{Lip}(\sigma) \int_0^T \int_{\mathbb{R}^d} | \gamma^n m_n - \gamma^* m_* | |\varphi| \, dx dt \\
	&\leq \mathrm{Lip}(\sigma) \left( \|\gamma^n - \gamma^*\|_{L^2_{x,t}} \|m_n\|_{L^2_{x,t}} + \|\gamma^*\|_{L^2_{x,t}} \|m_n - m_*\|_{L^2_{x,t}} \right) \|\varphi\|_{L^\infty_{x,t}} \to 0.
\end{align*}

Therefore, we conclude the following weak convergences:
\[
\left\{
\begin{aligned}
	\nabla \cdot (\beta^n \nabla m_n) &\rightharpoonup \nabla \cdot (\beta^* \nabla m_*) && \text{in } L^2([0,T]; H^{-1}(\mathbb{R}^d)), \\
	\alpha^n \cdot \nabla m_n &\rightharpoonup \alpha^*\cdot \nabla m_* && \text{in } L^2([0,T]; L^{2}(\mathbb{R}^d)), \\
	\theta^n m_n &\rightharpoonup \theta^* m_* && \text{in } L^2([0,T]; L^2(\mathbb{R}^d)), \\
	\sigma(\gamma^n m_n + \delta) &\rightharpoonup \sigma(\gamma^* m_* + \delta) && \text{in } L^2([0,T]; L^2(\mathbb{R}^d)).
\end{aligned}
\right.
\]

For any test function \(\varphi \in C_c^\infty(\mathbb{R}^d\times [0,T])\), the weak form of the PDE reads
\begin{equation}\label{eqn:weakform_m_n_unbounded}
	\int_0^T \int_{\mathbb{R}^d} \frac{\partial m_n}{\partial t} \varphi \, dx dt
	+ \int_0^T \int_{\mathbb{R}^d} \biggl[ 
	-\nabla \cdot \left( \beta^n \nabla m_n \right) 
	+ \alpha^n \cdot \nabla m_n
	+ \theta^n m_n
	\biggr] \varphi \, dx dt
	= \int_0^T \int_{\mathbb{R}^d} \sigma \left( \gamma^n m_n + \delta \right) \varphi \, dx dt.
\end{equation}

Letting \( n \to \infty \), we find
\begin{equation}\label{eqn:weakform_m_*_unbounded}
	\int_0^T \int_{\mathbb{R}^d} \frac{\partial m_*}{\partial t} \varphi \, dx dt
	+ \int_0^T \int_{\mathbb{R}^d} \biggl[
	-\nabla \cdot \left( \beta^* \nabla m_* \right) 
	+ \alpha^* \cdot \nabla m_*  
	+ \theta^* m_*
	\biggr] \varphi \, dx dt
	= \int_0^T \int_{\mathbb{R}^d} \sigma \left( \gamma^* m_* + \delta \right) \varphi \, dx dt.
\end{equation}

Supposing \(\varphi(x,T) = 0\) and integrating by parts in time for \eqref{eqn:weakform_m_n_unbounded} and \eqref{eqn:weakform_m_*_unbounded}, we have
\[
\int_{\mathbb{R}^d} m_n(x,0) \varphi(x,0) \, dx\] \[ = \int_0^T \int_{\mathbb{R}^d} \left[
-m_n \frac{\partial \varphi}{\partial t} + \bigl(-\nabla \cdot (\beta^n \nabla m_n) + \alpha^n \cdot\nabla m_n) + \theta^n m_n - \sigma(\gamma^n m_n + \delta)\bigr) \varphi
\right] dx dt,
\]
yielding
\[
\int_{\mathbb{R}^d} m_*(x,0) \varphi(x,0) \, dx\] \[ = \int_0^T \int_{\mathbb{R}^d} \left[
-m_* \frac{\partial \varphi}{\partial t} + \bigl(-\nabla \cdot (\beta^* \nabla m_*) + \alpha^* \cdot \nabla m_*) + \theta^* m_* - \sigma(\gamma^* m_* + \delta)\bigr) \varphi
\right] dx dt.
\]

Letting \(n \to \infty\), we have
\[
\int_{\mathbb{R}^d} m_n(x,0) \varphi(x,0) \, dx \to \int_{\mathbb{R}^d} m_*(x,0) \varphi(x,0) \, dx.
\]

By the arbitrariness of \(\varphi(x,0)\), it follows that
\[
m_*(x,0) = m_n(x,0) = m_0(x).
\]

Therefore, \(m_*\) is the weak solution of the PDE with parameters \(\alpha^*, \beta^*, \gamma^*, \theta^*\).

Recalling the strong convergence
\[
m_n \to m_* \quad \text{in} \quad C([0,T]; L^2(\mathbb{R}^d)),
\]
we conclude that
\[
\| m_{\alpha^*, \beta^*, \gamma^*, \theta^*}(\cdot, T) - m_T(\cdot) \|^2 = \inf_{\alpha, \beta, \gamma, \theta\in\mathcal{U}} \| m(\cdot, T) - m_T(\cdot) \|^2,
\]
which leads to the conclusion of Theorem \ref{thm:exioptcontrol:unbounded}.
\end{proof}

\section{Bilinear Control for Hyperbolic Equations}\label{Section:Hyper}

We consider a simplified version of \eqref{Hyper1} for control purposes:
\begin{equation}\label{eqn:ControlPDE_nonlinear}
\begin{cases}
	\displaystyle
	\frac{\partial^2}{\partial t^2}m(x,t) - \Delta m(x,t) + 2x \cdot \nabla m(x,t) + \theta(x,t)m(x,t) = \sigma(m(x,t)) \quad \text{on } \mathbb{R}^d \times \mathbb{R},\\
	m(x,0)=m_0(x),\\
	\partial_t m(x,0)=\tilde{m}_0(x),
\end{cases}
\end{equation}
% \begin{equation}\label{eqn:ControlPDE_nonlinear_pre}
% \frac{\partial^2}{\partial t^2}m(x,t) - \Delta m(x,t) + \nabla \cdot (2x\, m(x,t)) + \theta(x,t) m(x,t) = \sigma(m(x,t)) \quad \text{on } \mathbb{R}^d \times \mathbb{R},
% \end{equation}
where the functions $\alpha$, $\beta$, $\gamma$, and $\delta$ in \eqref{Hyper1} are specified as:
\begin{equation}
\begin{cases}
	\alpha(x,t) = 2x, \\
	\beta(x,t) = I, \\
	\gamma(x,t) = 1, \\
	\delta(x,t) = 0.
\end{cases}
\end{equation}

Control is applied solely through the function $\theta(x,t)$. The function $\sigma$ is a Lipschitz continuous nonlinearity, satisfying $\sigma(0) = 0$, consistent with Assumption~\ref{asp:unbounded_sigma}.

The problem now reduces to a bilinear control problem for hyperbolic equations. Controllability results for such problems are known in certain cases; see, for example, \cite{beauchard2018minimal,cannarsa2023bilinear,duca2024bilinear,POZZOLI2024421}.

% We can rewrite \eqref{eqn:ControlPDE_nonlinear_pre} as
% \[
% \frac{\partial^2}{\partial t^2}m(x,t) - \Delta m(x,t) + 2x \cdot \nabla m(x,t) + \theta(x,t)m(x,t) = (\sigma - 2d)(m(x,t)).
% \]

% Since the function $(\sigma - 2d)$ is also Lipschitz continuous and satisfies $(\sigma - 2d)(0) = 0$, we may, without loss of generality, relabel it as $\sigma$. This yields the following form of the PDE:
% \begin{equation}\label{eqn:ControlPDE_nonlinear}
% \frac{\partial^2}{\partial t^2}m(x,t) - \Delta m(x,t) + 2x \cdot \nabla m(x,t) + \theta(x,t)m(x,t) = \sigma(m(x,t)) \quad \text{on } \mathbb{R}^d \times \mathbb{R},
% \end{equation}
% where $\sigma$ is a Lipschitz continuous nonlinearity satisfying $\sigma(0) = 0$.

Note that the Ornstein--Uhlenbeck operator \((\Delta - 2x \cdot \nabla)\) can be rewritten as
\[
\Delta_\omega = \omega^{-1} \nabla \cdot (\omega \nabla),
\]
where \(\omega = e^{-|x|^2}\). Recalling the definitions of the weighted \(L^p\) and Sobolev spaces introduced in Definition~\ref{def:weightedSobolev}, the natural functional setting for \eqref{eqn:ControlPDE_nonlinear} is 
\[
\bigl(m(\cdot, t), \tfrac{\partial}{\partial t}m(\cdot, t)\bigr) \in H^1_\omega(\mathbb{R}^d) \times L^2_\omega(\mathbb{R}^d),
\]
where the weight is \(\omega = e^{-|x|^2}\).

Our aim now is to establish an approximate controllability result for \eqref{eqn:ControlPDE_nonlinear}--\eqref{Hyper2}. To this end, we define the norm $\|\cdot\|$ in this section as follows
\[ \|(u,v)\| =\| (u,v)  \|_{H^1_\omega(\mathbb{R}^d)\times L^2_\omega(\mathbb{R}^d)} =  \left( \|u\|_{H^1_\omega(\mathbb{R}^d)}^2+\|v\|_{L^2_\omega(\mathbb{R}^d)}^2 \right)^{\frac{1}{2}}.\]

Our strategy is to reduce the problem to controlling the following linear PDE:
\begin{equation}\label{eqn:ControlPDE_linear}
\begin{cases}
	\displaystyle
	\frac{\partial^2}{\partial t^2}m(x,t) - \Delta m(x,t) + 2x \cdot \nabla m(x,t) + \theta(x,t)m(x,t) = 0 \quad \text{on } \mathbb{R}^d \times \mathbb{R},\\
	m(x,0)=m_0(x),\\
	\partial_t m(x,0)=\tilde{m}_0(x).
\end{cases}
\end{equation}

Similar to \cite{POZZOLI2024421}, we control the system under the assumption that the control function $\theta$ is given by
\begin{equation}\label{eqn:formula_theta}
\theta(x,t) = \sum_{j=0}^{2d} \mathfrak{p}_j(t)\theta_j(x),
\end{equation}
where $\mathfrak{p}_j(t)$ are piecewise constant control functions taking values in $\mathbb{R}$. We denote $$\mathfrak{p}:=(\mathfrak{p}_0,\cdots,\mathfrak{p}_{2d}).$$ 

The spatial basis functions $\theta_j(x)$ are defined as
\begin{equation}\label{eqn:set_theta}
(\theta_0(x),\dots,\theta_{2d}(x)) := \big(1, \cos(\phi(x_1)), \sin(\phi(x_1)), \dots, \cos(\phi(x_d)), \sin(\phi(x_d)) \big),
\end{equation}
where $\phi$ is a mapping from $\mathbb{R}$ to $(-\pi,\pi)$ defined by
\begin{equation}\label{eqn:formula_for_phi}
\phi(x) = 2\sqrt{\pi} \int_{0}^{x} e^{-s^2}\,ds.
\end{equation}

In this section, the solution we consider is the \emph{mild solution}, defined as follows.

By setting \( M(t) = (m(t), \partial_t m(t)) \), we can reformulate \eqref{eqn:ControlPDE_nonlinear} and \eqref{eqn:ControlPDE_linear} as the following first-order abstract Cauchy problem:
\begin{equation}\label{eqn:ControlPDE_linear_abstract}
\frac{\partial}{\partial t} M(x,t) = (\mathcal{X} - \theta(x,t) \mathcal{Y}) M(x,t) + \mathcal{Z}(M(x,t)), \quad M_0 = (m_0, \tilde{m}_0),
\end{equation}
where \( \theta \) satisfies \eqref{eqn:formula_theta} and \eqref{eqn:set_theta}, and
\begin{equation}
\mathcal{X} = \begin{pmatrix} 0 & I \\ \Delta - 2x \cdot \nabla & 0 \end{pmatrix}, \quad
\mathcal{Y} = \begin{pmatrix} 0 & 0 \\ I & 0 \end{pmatrix}, \quad
\mathcal{Z}(M) = \begin{pmatrix} 0 \\ \sigma(m) \end{pmatrix}.
\end{equation}

For \eqref{eqn:ControlPDE_nonlinear}, \( \sigma \) is a Lipschitz nonlinearity satisfying \( \sigma(0) = 0 \); for \eqref{eqn:ControlPDE_linear}, we set \( \sigma = 0 \).

We say that a function \( (m(x,t), \partial_t m(x,t)) \in C\left([0,T]; H^1_\omega(\mathbb{R}^d) \times L^2_\omega(\mathbb{R}^d)\right) \) is a \emph{mild solution} of \eqref{eqn:ControlPDE_nonlinear} or \eqref{eqn:ControlPDE_linear} if it satisfies the following integral equation for all $t\in [0,T]$:
\[
M(t) = e^{t\mathcal{X}} M_0 + \int_0^t e^{(t-s)\mathcal{X}} \left( -\theta(x,s)\mathcal{Y} + \mathcal{Z} \right) M(s) \, ds.
\]

The operator $- \left( \Delta - 2x \cdot \nabla \right)$ on $\mathbb{R}^d$ has the following eigenfunctions and eigenvalues:
\begin{equation}\label{eqn:eigenfunction}
\psi_{\mathbf{n}}(x) = \prod_{i=1}^d \frac{H_{n_i}(x_i)}{\sqrt{2^{n_i} n_i! \sqrt{\pi}}}, \quad \rho_{\mathbf{n}} = 2|\mathbf{n}|, \quad \langle \psi_{\mathbf{n}}, \psi_{\mathbf{m}} \rangle = \delta_{\mathbf{n}\mathbf{m}},
\end{equation}
where $\mathbf{n} = (n_1, \dots, n_d) \in \mathbb{N}^d$, with $\mathbb{N}:=\{0,1,2,...\}$, $|\mathbf{n}|=\sum_{i=1}^d|n_i|$, and $\langle \cdot, \cdot \rangle$ denotes the inner product in $L^2_\omega(\mathbb{R}^d)$, i.e.,

\[
\langle f, g \rangle := \int_{\mathbb{R}^d} f(x)\,g(x)\,e^{-|x|^2} \, dx,
\]
and $H_n$ are the Hermite polynomials defined by
\begin{equation}
H_n(x) = (-1)^n e^{x^2} \frac{d^n}{dx^n} e^{-x^2}, \quad x \in \mathbb{R}.
\end{equation}

In the following, we will consider two cases for the initial data:
\begin{assumption}\label{asp:finitecombination}
For a given state $(m_0,\tilde{m}_0) \in H^1_\omega(\mathbb{R}^d) \times L^2_\omega(\mathbb{R}^d)$, we assume that $m_0 \neq 0$ and that $m_0$ is a finite linear combination of eigenfunctions $\psi_{\mathbf{n}}$.
\end{assumption}

\begin{assumption}\label{asp:finitecombination2}
For a given state $(m_0,\tilde{m}_0) \in H^1_\omega(\mathbb{R}^d) \times L^2_\omega(\mathbb{R}^d)$, we assume that $m_0 = 0$, $\tilde{m}_0 \neq 0$, and that $\tilde{m}_0$ is a finite linear combination of eigenfunctions $\psi_{\mathbf{n}}$.
\end{assumption}
Our results will apply whenever either one of these assumptions is satisfied.

\begin{thm}\label{thm:control_for_nonlinear_wave}
For any initial state $(m_0,\tilde{m}_0) \in H^1_\omega(\mathbb{R}^d) \times \bigl( L^2_\omega (\mathbb{R}^d)\cap L^p_{\mathrm{loc}}(\mathbb{R}^d)\bigr)$ for some $p > 2$ satisfying Assumption~\ref{asp:finitecombination} or Assumption~\ref{asp:finitecombination2}, and for any final state $(m_T, \tilde{m}_T) \in H^1_\omega \times L^2_\omega(\mathbb{R}^d)$, and any $\varepsilon, T > 0$, there exists a control $\theta \in L^\infty(\mathbb{R}^d \times [0,T])$ such that the mild solution \((m,\partial_t m)\in C\left([0,T]; H^1_\omega(\mathbb{R}^d) \times L^2_\omega(\mathbb{R}^d)\right)\) of \eqref{eqn:ControlPDE_nonlinear} with initial state $(m_0, \tilde{m}_0)$ satisfies
\[
\left\| (m(\cdot, T),  \partial_t{m}(\cdot, T)) - (m_T, \tilde{m}_T) \right\|_{H^1_\omega(\mathbb{R}^d) \times L^2_\omega(\mathbb{R}^d)} < \varepsilon.
\]
\end{thm}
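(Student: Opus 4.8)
The plan is to work with the first-order formulation \eqref{eqn:ControlPDE_linear_abstract}, $\partial_t M = (\mathcal{X}-\theta\mathcal{Y})M + \mathcal{Z}(M)$, on the Hilbert space $\mathcal{H} := H^1_\omega(\mathbb{R}^d)\times L^2_\omega(\mathbb{R}^d)$ with $\omega = e^{-|x|^2}$, and to combine the abstract bilinear (saturation) controllability scheme for PDEs---in the spirit of Chambrion--Mason--Sigalotti--Boscain and, in our precise setting, Pozzoli~\cite{POZZOLI2024421}---with a perturbation argument absorbing the Lipschitz term $\sigma$. Recall that $-(\Delta-2x\cdot\nabla)$ is self-adjoint and nonnegative on $L^2_\omega$, with complete orthonormal eigenbasis $(\psi_{\mathbf n},\, 2|\mathbf n|)$ as in \eqref{eqn:eigenfunction}; consequently $\mathcal{X}$ generates a $C_0$-group $e^{t\mathcal{X}}$ on $\mathcal{H}$, bounded on $[0,T]$, whose restriction to any finite span of eigenmodes is an explicit block-rotation with frequencies $\sqrt{2|\mathbf n|}$ (plus a linear-in-$t$ drift on the kernel mode $\psi_{\mathbf 0}$). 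Since the admissible controls built below are piecewise constant in $t$ with $\theta(\cdot,t)$ a fixed real combination of the bounded functions $\theta_0,\dots,\theta_{2d}$ from \eqref{eqn:set_theta}, they automatically satisfy $\theta\in L^\infty(\mathbb{R}^d\times[0,T])$.

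First I would perform two reductions. Because $\{\psi_{\mathbf n}\}$ is an orthonormal basis of $L^2_\omega$ and each $\psi_{\mathbf n}\in H^1_\omega$, finite linear combinations of eigenfunctions are dense in $\mathcal{H}$, so it suffices to steer $(m_0,\tilde m_0)$ to within $\varepsilon/2$ of a target $(m_T,\tilde m_T)$ that is itself a finite eigenfunction combination. Next I would reduce to the linear equation \eqref{eqn:ControlPDE_linear} ($\sigma\equiv 0$): each elementary ``shearing'' maneuver $M\mapsto(m,\,\partial_t m - (\text{combination of }\theta_j)\,m)$ is realized on an arbitrarily short time window by a large-amplitude control whose time integral $\int\|\theta(\cdot,s)\|_{L^\infty}\,ds$ stays bounded (a consequence of $\mathcal{Y}^2 = 0$, so these are exact phase-space shears). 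Writing the Duhamel difference between the nonlinear and linear trajectories driven by the same control, using $\|\mathcal{Z}(M)\|_{\mathcal{H}}\le \mathrm{Lip}(\sigma)\|m\|_{L^2_\omega}$ and the energy a priori bound, a Gronwall estimate shows this difference is controlled by the total duration of the non-shearing (drift) sub-steps; a finite staircase iteration then transfers approximate controllability from the linear to the nonlinear equation. The hypothesis $\tilde m_0\in L^p_{\mathrm{loc}}$, $p>2$, is used only to guarantee the mild solution of \eqref{eqn:ControlPDE_nonlinear} lies in $C([0,T];\mathcal{H})$.

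For the linear system I would run the saturation argument. Computing the relevant (formal) Lie brackets of $\mathcal{X}$ and $\mathcal{Y}_j := \theta_j\mathcal{Y}$ gives $[\mathcal{X},\mathcal{Y}_j] = \mathrm{diag}(\theta_j,-\theta_j)$ (a multiplication operator on $\mathcal{H}$) and $[[\mathcal{X},\mathcal{Y}_j],\mathcal{Y}_k]\colon M\mapsto(0,\,-2\theta_j\theta_k m_1)$, and iterating yields shears by arbitrary products of the $\theta_j$, hence by arbitrary trigonometric polynomials in $\phi(x_1),\dots,\phi(x_d)$. Since $\phi:\mathbb{R}\to(-\pi,\pi)$ from \eqref{eqn:formula_for_phi} is a bijection along which $e^{-|x|^2}\,dx$ pushes forward (up to a constant) to Lebesgue measure on $(-\pi,\pi)$, such trigonometric polynomials are dense and couple the eigenmodes $\psi_{\mathbf n}$ along a connected ``nearest-neighbour'' graph; together with the nonresonance of the frequency family $\{\sqrt{2|\mathbf n|}\}$ this is exactly the hypothesis under which the Galerkin truncations of the bilinear system are controllable. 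One then projects onto the first $N$ eigenmodes, obtains exact controllability of the finite-dimensional truncation from the Lie-algebra rank condition, and passes to the limit $N\to\infty$, estimating the discarded tail via the energy bound (boundedness of $e^{t\mathcal{X}}$ on $[0,T]$ and of the $\mathcal{Y}_j$ on $\mathcal{H}$), to obtain approximate controllability of \eqref{eqn:ControlPDE_linear} in $\mathcal{H}$.

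Finally, the role of Assumption~\ref{asp:finitecombination} versus Assumption~\ref{asp:finitecombination2} is to ``activate'' the dynamics. Under Assumption~\ref{asp:finitecombination}, $m_0$ is a nonzero polynomial in $x$, hence nonzero a.e., so a first shear $(m_0,\tilde m_0)\mapsto(m_0,\tilde m_0 + c\,m_0)$ with $c$ ranging over trigonometric polynomials in $\phi(x_i)$ already reaches a dense set of velocities, and the free rotation then begins the saturation; under Assumption~\ref{asp:finitecombination2}, $m_0 = 0$ but $\tilde m_0\ne 0$, and one first lets $e^{t\mathcal{X}}$ act to instantly produce a nonzero (finite-eigenfunction) first component, reducing to the previous case (equivalently, $[\mathcal{X},\mathcal{Y}_j]$ acts nontrivially on the velocity slot of $(0,\tilde m_0)$). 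Chaining finitely many such shears and drift/commutator maneuvers, arranged to fill exactly $[0,T]$, to within $\varepsilon/4$ of the truncated target and then undoing the reductions of the first step yields the claimed inequality. The main obstacle is the linear step: making the formal bracket computation rigorous for genuinely unbounded generators, and verifying that multiplication by $\cos\phi(x_i),\sin\phi(x_i)$, read in the Hermite eigenbasis, yields a connected coupling compatible with the nonresonance of $\{\sqrt{2|\mathbf n|}\}$---this is where the specific choice of $\phi$ is essential and where the bulk of the work (Galerkin truncation and tail estimates, following~\cite{POZZOLI2024421}) resides; a secondary difficulty is keeping the Gronwall constants in the nonlinear-to-linear reduction independent of the large control amplitudes, which is precisely why the shears must be time-localized.
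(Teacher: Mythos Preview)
Your overall architecture (saturation for the linear equation, then lift to the nonlinear one) is right, but two of your key steps do not match the paper and one of them has a real gap.

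\textbf{Nonlinear-to-linear reduction.} You propose to run the same control through both equations and Gronwall the Duhamel difference, worrying (correctly) about keeping the constants independent of the large shear amplitudes. The paper bypasses this entirely with a one-line absorption trick: once a control $\theta^*$ is found so that the \emph{linear} solution $m^*$ lands near the target, set $\bar\sigma(y)=\sigma(y)/y$ (bounded since $\sigma$ is Lipschitz with $\sigma(0)=0$) and define $\theta:=\theta^*+\bar\sigma(m^*)\in L^\infty$. Then $m^*$ is itself the mild solution of the nonlinear equation with control $\theta$, so no comparison estimate is needed. Your Gronwall route could in principle be made to work, but it is the hard way, and you yourself flag the amplitude issue as a ``secondary difficulty''; the paper's device removes it completely.

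\textbf{The linear step and ``nonresonance''.} You invoke ``the nonresonance of the frequency family $\{\sqrt{2|\mathbf n|}\}$'' to get controllability of the Galerkin truncations. This is false: the eigenvalues $2|\mathbf n|$ of the Ornstein--Uhlenbeck operator have high multiplicity (all $\mathbf n$ with the same $|\mathbf n|$ share a frequency), so the spectrum is maximally resonant and the Chambrion--Mason--Sigalotti--Boscain hypothesis does not apply. The paper does not pass through finite-dimensional Galerkin controllability at all. Instead it proves two concrete operator limits, $e^{\tau(\mathcal X+\zeta\tau^{-1}\mathcal Y)}\to I+\zeta\mathcal Y$ and $e^{-\tau^{-1/2}\varphi\mathcal Y}e^{\tau\mathcal X}e^{\tau^{-1/2}\varphi\mathcal Y}\to I-\varphi^2\mathcal Y$ (Propositions~\ref{prop:lemma_1} and the one following), and uses them to realize the shears $(m,\partial_t m)\mapsto(m,\partial_t m+\psi m)$ for $\psi$ ranging over the saturating space $\mathcal V_\infty^\Phi$; density of $\mathcal V_\infty^\Phi$ in $L^p_\omega$ (Proposition~\ref{prop:saturating_space_Phi}) then gives velocity controllability, and a separate explicit construction (Proposition~\ref{prop:control_displacement}) handles the displacement. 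Your bracket computations $[\mathcal X,\mathcal Y_j]$, $[[\mathcal X,\mathcal Y_j],\mathcal Y_k]$ are the formal shadow of exactly these limits, so you are close---but drop the nonresonance/Galerkin narrative, which does not hold here.

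\textbf{Role of $L^p_{\mathrm{loc}}$.} You say this hypothesis is ``used only to guarantee the mild solution lies in $C([0,T];\mathcal H)$.'' That is not where it enters. Well-posedness (Proposition~\ref{Mild}) needs only $\mathcal H$. The $L^p_{\mathrm{loc}}$ assumption on $\tilde m_0$ (and, in the proof, on the intermediate target velocity) is used in Step~3 of Proposition~\ref{prop:control_velocity}: one needs $\psi_\eta=\frac{v-\tilde m_0}{m_0}\chi_{\mathcal B\setminus N_\eta(m_0)}\in L^p_\omega$ in order to approximate it in $L^p_\omega$ by elements of $\mathcal V_\infty^\Phi$ and then apply H\"older (with $m_0\in L^q_\omega$, $1/p+1/q=1/2$) to control $\|\psi_\eta m_0-\psi_N m_0\|_{L^2_\omega}$.
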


\begin{remark}
Our assumption on the initial state, namely \((m_0, \tilde{m}_0) \in H^1_\omega \times \bigl( L^2_\omega(\mathbb{R}^d) \cap L^p_{\text{loc}}(\mathbb{R}^d)\bigr)\) for some \(p > 2\), satisfying either Assumption \ref{asp:finitecombination} or Assumption \ref{asp:finitecombination2}, includes the case where \((m_0, \tilde{m}_0) \neq (0,0)\) and both \(m_0\) and \(\tilde{m}_0\) are polynomials in variable $x_1,...,x_n$. Note that $(0,0)$ is the stationary solution of \eqref{eqn:ControlPDE_nonlinear} or \eqref{eqn:ControlPDE_linear} for any choice of the control. Hence, the system is uncontrollable from the initial state $(0,0)$.
\end{remark}

The proof of Theorem \ref{thm:control_for_nonlinear_wave} relies on the following approximate controllability result for \eqref{eqn:ControlPDE_linear}.

\begin{prop}\label{prop:control_for_linear_wave}
For any initial state \((m_0, \tilde{m}_0) \in H^1_\omega(\mathbb{R}^d) \times \bigl( L^2_\omega(\mathbb{R}^d) \cap L^p_{\mathrm{loc}}(\mathbb{R}^d)\bigr)\) for some \(p > 2\) satisfying Assumption \ref{asp:finitecombination} or Assumption \ref{asp:finitecombination2}, any final state \((m_T, \tilde{m}_T) \in H^1_\omega(\mathbb{R}^d) \times L^2_\omega(\mathbb{R}^d)\), and any \(\varepsilon, T > 0\), there exists a piecewise constant control \(\mathfrak{p} : [0,T] \to \mathbb{R}^{2d+1}\) such that the mild solution \((m,\partial_t m)\in C\left([0,T]; H^1_\omega(\mathbb{R}^d) \times L^2_\omega(\mathbb{R}^d)\right)\) of \eqref{eqn:ControlPDE_linear} under the control law given by \eqref{eqn:formula_theta} and \eqref{eqn:set_theta} with initial state \((m_0, \tilde{m}_0)\) satisfies
\[
\left\| \left( m(\cdot, T), \partial_t{m}(\cdot, T) \right) - (m_T, \tilde{m}_T) \right\|_{H^1_\omega(\mathbb{R}^d) \times L^2_\omega (\mathbb{R}^d)} < \varepsilon.
\]
\end{prop}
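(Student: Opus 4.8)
The plan is to adapt the Galerkin / geometric-control strategy for bilinear PDE control, closely following \cite{POZZOLI2024421} and the hyperbolic bilinear control literature \cite{beauchard2018minimal,cannarsa2023bilinear,duca2024bilinear}: project the dynamics onto finitely many Hermite modes, establish controllability of the resulting finite-dimensional bilinear system, and estimate the energy that leaks into the discarded modes. Throughout I work in the energy space $\mathcal{E} := H^1_\omega(\mathbb{R}^d) \times L^2_\omega(\mathbb{R}^d)$, $\omega = e^{-|x|^2}$, on which $\mathcal{X}$ generates the strongly continuous group associated with the free evolution $\partial_t^2 m + \mathcal{A}m = 0$, where $\mathcal{A} := -(\Delta - 2x\cdot\nabla)$ has the orthonormal eigenbasis $\{\psi_{\mathbf{n}}\}_{\mathbf{n}\in\mathbb{N}^d}$ of \eqref{eqn:eigenfunction} with $\mathcal{A}\psi_{\mathbf{n}} = 2|\mathbf{n}|\psi_{\mathbf{n}}$; finite linear combinations of the $\psi_{\mathbf{n}}$ are dense in $H^1_\omega$, in $L^2_\omega$, and hence in $\mathcal{E}$.

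Fix $\varepsilon, T > 0$. First I would reduce to a finite-dimensional target: using density of Hermite combinations in $\mathcal{E}$ and continuity of the solution map, choose $N$ so large that $(m_T, \tilde{m}_T)$ --- and also the component of $(m_0,\tilde{m}_0)$ that is not a priori a finite combination, namely $\tilde{m}_0$ under Assumption~\ref{asp:finitecombination} --- are approximated to within a small fraction of $\varepsilon$ by elements of $\mathcal{H}_N := \mathrm{span}\{\psi_{\mathbf{n}} : |\mathbf{n}| \le N\}$. The role of Assumptions~\ref{asp:finitecombination}--\ref{asp:finitecombination2} is that the prescribed datum ($m_0$, resp.\ $\tilde{m}_0$) is a \emph{nonzero} finite Hermite combination: nonzero, so the trajectory is not trapped at the uncontrollable stationary solution $(0,0)$; finite, so the mode-excitation argument can be bootstrapped from finitely many modes. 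Writing $m(\cdot,t) = \sum_{\mathbf{n}} a_{\mathbf{n}}(t)\psi_{\mathbf{n}}$, the control law \eqref{eqn:formula_theta}--\eqref{eqn:set_theta} turns the abstract system \eqref{eqn:ControlPDE_linear_abstract} (with $\sigma = 0$), after truncation to $\mathcal{H}_N\times\mathcal{H}_N$, into a finite-dimensional bilinear control system $\dot{y} = \bigl(A + \sum_{j=0}^{2d}\mathfrak{p}_j(t) B_j\bigr) y$ with $y = (a_{\mathbf{n}}, \dot{a}_{\mathbf{n}})_{|\mathbf{n}|\le N}$, where $A$ encodes the decoupled oscillators $\ddot{a}_{\mathbf{n}} + 2|\mathbf{n}| a_{\mathbf{n}} = 0$ and $B_j$ encodes $-\theta_j\mathcal{Y}$, acting through the Hermite coupling coefficients $\langle \theta_j \psi_{\mathbf{n}}, \psi_{\mathbf{m}}\rangle$ on the velocity components.

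The heart of the argument is the (approximate) controllability of this truncated system, which is where the choice of the $\theta_j$ enters. The constant control $\theta_0 \equiv 1$ acts as a globally tunable frequency: under a spatially constant, piecewise-constant control $c$, the $\mathbf{n}$-th mode obeys $\ddot{a}_{\mathbf{n}} + (2|\mathbf{n}| + c)a_{\mathbf{n}} = 0$, which --- taking $c < -2|\mathbf{n}|$ for the lowest excited mode (hyperbolic behavior) and exploiting that distinct modes rotate at distinct rates $\sqrt{2|\mathbf{n}| + c}$ --- lets one reach a dense set of amplitude ratios and relative phases among the already-excited modes. The coordinatewise pairs $\cos(\phi(x_k)), \sin(\phi(x_k))$ supply the inter-mode coupling; the specific choice \eqref{eqn:formula_for_phi} of the diffeomorphism $\phi$ is designed so that the matrix elements $\langle \cos(\phi(x_k))\psi_{\mathbf{n}}, \psi_{\mathbf{m}}\rangle$ and $\langle \sin(\phi(x_k))\psi_{\mathbf{n}}, \psi_{\mathbf{m}}\rangle$ are nonzero for suitable pairs differing in the $k$-th index, making the transition graph on $\mathbb{N}^d$ connected and resolving the large degeneracy of the spectrum $\{2|\mathbf{n}|\}$ for $d\ge 2$ by assigning one control pair to each coordinate. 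Granting these nonvanishing/connectedness properties, one verifies the Lie algebra rank condition for the truncated system (equivalently, the non-degenerate connectedness-chain condition of bilinear control theory) and concludes, by Chow--Rashevskii / saturation-type arguments, that there is a piecewise-constant $\mathfrak{p} : [0,T]\to\mathbb{R}^{2d+1}$, with amplitude bounded independently of $N$, steering $y$ to within a small fraction of $\varepsilon$ of the projected target. \textbf{The verification of this rank/chain condition --- in particular ruling out spurious resonances among the frequencies $\sqrt{2|\mathbf{n}|+c}$ while using the precise structure of the matrix elements of the $\theta_j$ --- is the step I expect to be the main obstacle.}

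Finally I would control the gap between the true mild solution and its Galerkin truncation. Since the $\theta_j$ are smooth and bounded they act as bounded operators on the weighted Sobolev scale adapted to $\mathcal{A}$ (with $[\mathcal{A},\theta_j]$ of strictly lower order), and the constructed controls $\mathfrak{p}_j$ have $L^1(0,T)$-norm bounded uniformly in $N$; a Gronwall estimate on the high-mode energy $\sum_{|\mathbf{n}|>N}\bigl(2|\mathbf{n}|\,|a_{\mathbf{n}}|^2 + |\dot{a}_{\mathbf{n}}|^2\bigr)$ then keeps it below a small fraction of $\varepsilon$ on $[0,T]$, uniformly. The hypothesis $\tilde{m}_0 \in L^p_{\mathrm{loc}}$ for some $p > 2$ supplies the extra local integrability needed for the mild-solution representation with an $L^\infty$ potential to be rigorous and stable under truncation. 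Combining the finitely many small error terms by the triangle inequality in $\mathcal{E}$ yields $\|(m(\cdot,T),\partial_t m(\cdot,T)) - (m_T,\tilde{m}_T)\| < \varepsilon$, which is the assertion of the Proposition.
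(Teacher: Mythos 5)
Your proposal takes a genuinely different route from the paper. The paper proves this proposition by a \emph{saturation} argument carried out directly in the infinite-dimensional energy space: it first reduces to a short-time version (Proposition~\ref{prop:control_for_linear_wave_short_time}), then splits the task into controlling the displacement (Proposition~\ref{prop:control_displacement}, which only uses free evolution with a judiciously chosen initial velocity and the explicit Hermite spectral formula) and controlling the velocity at fixed displacement (Proposition~\ref{prop:control_velocity}). The velocity step uses two elementary control limits --- Proposition~\ref{prop:lemma_1} showing $e^{\tau(\mathcal{X}+\tau^{-1}\zeta\mathcal{Y})}\to e^{\zeta\mathcal{Y}}$ and the companion result showing $e^{-\tau^{-1/2}\varphi\mathcal{Y}}e^{\tau\mathcal{X}}e^{\tau^{-1/2}\varphi\mathcal{Y}}\to e^{-\varphi^2\mathcal{Y}}$ --- to induct along the saturating hierarchy $\mathcal{V}_n^\Phi$ of Proposition~\ref{prop:saturating_space_Phi}, and finishes by density of $\mathcal{V}_\infty^\Phi$ in $L^p_\omega$. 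Exact time $T$ is then achieved by parking at the stationary state $(1,0)$. Your Galerkin-plus-Lie-algebra route never truncates in the paper's proof; the paper's elementary steps and density argument replace both the rank condition and the truncation estimate.

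Beyond being a different route, your proposal has two genuine gaps. First, as you yourself flag, the Lie algebra rank / non-resonant connectedness-chain condition for the truncated bilinear system with the specific basis $\theta_j$ is asserted but never verified; this is precisely the content that does the mathematical work, and without computing the matrix elements $\langle\theta_j\psi_{\mathbf{n}},\psi_{\mathbf{m}}\rangle$ and ruling out degeneracies (the eigenvalue $\rho_{\mathbf{n}}=2|\mathbf{n}|$ is highly degenerate for $d\ge 2$) the argument does not close. Second, the claim that the controls for the truncated system can be taken with $L^1(0,T)$-norm bounded uniformly in $N$ is unjustified and, in light of the paper's construction, implausible: the elementary controls that generate the Lie brackets / saturation steps have amplitudes of order $\tau^{-1}$ and $\tau^{-1/2}$ on intervals of length $\tau\to 0$, so a naive Gronwall estimate on the high-mode energy does not give a bound uniform in the truncation level; the paper circumvents this entirely by proving the limits of Propositions~\ref{prop:lemma_1} and its companion directly in $H^1_\omega\times L^2_\omega$. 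Finally, a smaller point: the hypothesis $\tilde{m}_0\in L^p_{\mathrm{loc}}$ for some $p>2$ is not there for the mild-solution representation (Proposition~\ref{Mild} needs no such thing); in the paper it is used in Step~3 of Proposition~\ref{prop:control_velocity} to guarantee $\psi_\eta=(v-\tilde{m}_0)/m_0\,\chi_{\mathcal{B}\setminus N_\eta(m_0)}\in L^p_\omega(\mathbb{R}^d)$ so that the product $\psi_\eta m_0$ can be approximated in $L^2_\omega$ by $\psi_n m_0$ with $\psi_n\in\mathcal{V}^\Phi_\infty$ via H\"older.
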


The proof of Proposition \ref{prop:control_for_linear_wave} mainly relies on Propositions \ref{prop:control_displacement} and \ref{prop:control_velocity}. Note that the case of a periodic domain in Proposition \ref{prop:control_for_linear_wave} has been considered in \cite{POZZOLI2024421}.  In the next subsection, we first establish the well-posedness of \eqref{eqn:ControlPDE_linear}, then prove these two propositions in the subsequent subsections.

\subsection{Well-posedness of the wave equation}

% By setting \( M(t) = (m(t), \partial_t m(t)) \), we can reformulate \eqref{eqn:ControlPDE_nonlinear} and \eqref{eqn:ControlPDE_linear} as the following first-order abstract Cauchy problem:
% \begin{equation}\label{eqn:ControlPDE_linear_abstract}
% \frac{\partial}{\partial t} M(x,t) = (\mathcal{X} - \theta(x,t) \mathcal{Y}) M(x,t) + \mathcal{Z}(M(x,t)), \quad M_0 = (m_0, \tilde{m}_0),
% \end{equation}
% where \( \theta \) satisfies \eqref{eqn:formula_theta} and \eqref{eqn:set_theta}, and
% \begin{equation}
% \mathcal{X} = \begin{pmatrix} 0 & I \\ \Delta - 2x \cdot \nabla & 0 \end{pmatrix}, \quad
% \mathcal{Y} = \begin{pmatrix} 0 & 0 \\ I & 0 \end{pmatrix}, \quad
% \mathcal{Z}(M) = \begin{pmatrix} 0 \\ \sigma(m) \end{pmatrix}.
% \end{equation}
% For \eqref{eqn:ControlPDE_nonlinear}, \( \sigma \) is a Lipschitz nonlinearity satisfying \( \sigma(0) = 0 \); for \eqref{eqn:ControlPDE_linear}, we set \( \sigma = 0 \).

Under the spectral decomposition of the Ornstein–Uhlenbeck operator \( \left( \Delta - 2x \cdot \nabla \right) \), we can express the free evolution with the given initial state \( (m(t), \partial_t m(t)) := e^{t\mathcal{X}}(m_0, \tilde{m}_0) \in H^1_\omega(\mathbb{R}^d) \times L^2_\omega(\mathbb{R}^d) \) as follows:
\begin{equation}\label{eqn:linear_displacement_evolution}
m(t) = \left( \langle m_0, \psi_0 \rangle + \langle \tilde{m}_0, \psi_0 \rangle t \right) \psi_0 
+ \sum_{\mathbf{n} \in \mathbb{N}^d \setminus\{\mathbf{0}\}} 
\left( 
\langle m_0, \psi_{\mathbf{n}} \rangle \cos(\sqrt{\rho_{\mathbf{n}}} t) 
+ \langle \tilde{m}_0, \psi_{\mathbf{n}} \rangle \frac{\sin(\sqrt{\rho_{\mathbf{n}}} t)}{\sqrt{\rho_{\mathbf{n}}}} 
\right) \psi_{\mathbf{n}},
\end{equation}
\begin{equation}\label{eqn:linear_velocity_evolution}
\partial_t m(t) = \sum_{\mathbf{n} \in \mathbb{N}^d} 
\left( 
- \sqrt{\rho_{\mathbf{n}}} \langle m_0, \psi_{\mathbf{n}} \rangle \sin(\sqrt{\rho_{\mathbf{n}}} t) 
+ \langle \tilde{m}_0, \psi_{\mathbf{n}} \rangle \cos(\sqrt{\rho_{\mathbf{n}}} t) 
\right) \psi_{\mathbf{n}}.
\end{equation}

\begin{remark}
Based on the expressions above, the operator \( e^{t\mathcal{X}} \) generates a \( C_0 \)-group on \( H^1_\omega(\mathbb{R}^d) \times L^2_\omega(\mathbb{R}^d) \).
\end{remark}
\begin{prop}\label{Mild}
For any \( T > 0 \), any 
\( \mathfrak{p} \in L^1([0,T]; \mathbb{R}^{2d+1}) \),
and any initial state 
\( M_0 \in H^1_\omega(\mathbb{R}^d) \times L^2_\omega(\mathbb{R}^d) \),
there exists a unique mild solution
\[
\mathcal{S}(\cdot, M_0, \mathfrak{p}) \in C\left([0,T]; H^1_\omega(\mathbb{R}^d) \times L^2_\omega(\mathbb{R}^d)\right)
\]
of \eqref{eqn:ControlPDE_linear_abstract}, which satisfies, for all \( t \in [0,T] \),
\begin{equation}
	\mathcal{S}(t, M_0, \mathfrak{p}) = e^{t\mathcal{X}} M_0 
	+ \int_{0}^t e^{(t-s)\mathcal{X}} 
	\left( 
	- \sum_{j=0}^{2d} \mathfrak{p}_j(s) \theta_j(x) \mathcal{Y} + \mathcal{Z}
	\right) 
	\mathcal{S}(s, M_0, \mathfrak{p}) \, ds.
\end{equation}

Moreover, there exists a constant \( \mathcal C = \mathcal C(\mathfrak{p}, T) \) such that, for any other initial state 
\( M_1 = (m_1, \dot{m}_1) \in H^1_\omega(\mathbb{R}^d) \times L^2_\omega(\mathbb{R}^d) \),
the corresponding solutions satisfy
\begin{equation}\label{eqn:Lipschitz_continuity}
	\left\| 
	\mathcal{S}(\cdot, M_0, \mathfrak{p}) 
	- \mathcal{S}(\cdot, M_1, \mathfrak{p}) 
	\right\|_{C([0,T]; H^1_\omega(\mathbb{R}^d) \times L^2_\omega(\mathbb{R}^d))} 
	\leq 
	C \left\| M_0 - M_1 \right\|_{H^1_\omega(\mathbb{R}^d) \times L^2_\omega(\mathbb{R}^d)}.
\end{equation}

\end{prop}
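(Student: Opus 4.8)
The plan is to prove existence and uniqueness of the mild solution by a standard Banach fixed-point (Picard iteration) argument in the space $C([0,T]; H^1_\omega(\mathbb{R}^d) \times L^2_\omega(\mathbb{R}^d))$, exploiting three facts: (i) $e^{t\mathcal{X}}$ is a $C_0$-group on $\mathcal{H} := H^1_\omega(\mathbb{R}^d) \times L^2_\omega(\mathbb{R}^d)$ (noted in the remark preceding the statement), hence uniformly bounded on $[0,T]$ by some constant $\mathcal{M} = \mathcal{M}(T)$; (ii) the multiplication operators $\theta_j(x)\mathcal{Y}$ are bounded on $\mathcal{H}$, uniformly in $j$, since each $\theta_j \in \{1, \cos(\phi(x_k)), \sin(\phi(x_k))\}$ is bounded with bounded derivatives (here $\phi$ maps into $(-\pi,\pi)$ and $\phi' = 2\sqrt{\pi}\,e^{-x^2}$ is bounded), so multiplication by $\theta_j$ maps $H^1_\omega \to H^1_\omega$ and $L^2_\omega \to L^2_\omega$ boundedly, and $\mathcal{Y}$ just moves the first component into the second; (iii) $\mathcal{Z}(M) = (0, \sigma(m))$ is globally Lipschitz from $\mathcal{H}$ to $\mathcal{H}$ because $\sigma$ is Lipschitz with $\sigma(0)=0$, which gives $\|\sigma(m)\|_{L^2_\omega} \le \mathrm{Lip}(\sigma)\|m\|_{L^2_\omega} \le \mathrm{Lip}(\sigma)\|m\|_{H^1_\omega}$.

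First I would define the map $\Phi$ on $C([0,T];\mathcal{H})$ by
\[
(\Phi M)(t) = e^{t\mathcal{X}} M_0 + \int_0^t e^{(t-s)\mathcal{X}}\Big(-\sum_{j=0}^{2d}\mathfrak{p}_j(s)\theta_j(x)\mathcal{Y} + \mathcal{Z}\Big) M(s)\, ds,
\]
and check it is well-defined: the integrand is Bochner-integrable because $s \mapsto \mathfrak{p}_j(s)$ is in $L^1([0,T])$, $\theta_j\mathcal{Y}$ and $e^{(t-s)\mathcal{X}}$ are bounded, and $s \mapsto M(s)$ is continuous hence bounded; continuity of $t \mapsto (\Phi M)(t)$ follows from strong continuity of the group and dominated convergence. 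Next I would estimate, for $M, \tilde{M} \in C([0,T];\mathcal{H})$,
\[
\|(\Phi M)(t) - (\Phi \tilde{M})(t)\|_{\mathcal{H}} \le \mathcal{M}\int_0^t \Big(\kappa\sum_{j=0}^{2d}|\mathfrak{p}_j(s)| + \mathrm{Lip}(\sigma)\Big)\|M(s)-\tilde{M}(s)\|_{\mathcal{H}}\, ds,
\]
where $\kappa$ bounds $\|\theta_j\mathcal{Y}\|_{\mathcal{L}(\mathcal{H})}$. Writing $g(s) := \mathcal{M}(\kappa\sum_j|\mathfrak{p}_j(s)| + \mathrm{Lip}(\sigma))$, which is in $L^1([0,T])$, a standard iteration estimate gives $\|\Phi^n M - \Phi^n \tilde{M}\|_{C([0,t];\mathcal{H})} \le \frac{1}{n!}\big(\int_0^t g\big)^n \|M-\tilde{M}\|_{C([0,T];\mathcal{H})}$, so some power $\Phi^n$ is a contraction on $C([0,T];\mathcal{H})$; hence $\Phi$ has a unique fixed point $\mathcal{S}(\cdot, M_0, \mathfrak{p})$, which is the unique mild solution. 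The Lipschitz dependence \eqref{eqn:Lipschitz_continuity} on the initial datum then follows directly: subtracting the integral equations for $M_0$ and $M_1$, applying the same bound, and invoking Grönwall's inequality yields
\[
\|\mathcal{S}(t,M_0,\mathfrak{p}) - \mathcal{S}(t,M_1,\mathfrak{p})\|_{\mathcal{H}} \le \mathcal{M}\,\|M_0 - M_1\|_{\mathcal{H}}\,\exp\Big(\int_0^T g(s)\,ds\Big),
\]
so $\mathcal{C} = \mathcal{C}(\mathfrak{p},T) = \mathcal{M}\exp(\int_0^T g)$ works.

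The main obstacle — really the only non-routine point — is verifying claim (ii), namely that multiplication by each $\theta_j$ is bounded on $H^1_\omega(\mathbb{R}^d)$ with the weight $\omega = e^{-|x|^2}$: one must check $\nabla(\theta_j u) = (\nabla\theta_j)u + \theta_j\nabla u$ stays in $L^2_\omega$, which requires $\nabla\theta_j$ bounded, and this holds because $\partial_{x_k}\cos(\phi(x_k)) = -\sin(\phi(x_k))\phi'(x_k)$ with $|\phi'| = 2\sqrt{\pi}e^{-x_k^2} \le 2\sqrt{\pi}$, and likewise for the sine terms, while $\theta_0 = 1$ is trivial. Everything else is the textbook Picard/Grönwall machinery for semilinear evolution equations with a bounded (time-dependent, $L^1$-in-time) perturbation and a globally Lipschitz nonlinearity; the $L^1$ (rather than $L^\infty$) regularity of $\mathfrak{p}$ is handled cleanly by the $\frac{1}{n!}(\int g)^n$ form of the iterates rather than a naive $T^n/n!$ bound.
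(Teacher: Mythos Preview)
Your proposal is correct and follows essentially the same strategy as the paper: a Banach fixed-point argument for existence and uniqueness, followed by Gr\"onwall's inequality for the Lipschitz dependence on initial data. The only notable difference is tactical: the paper obtains a contraction on a short interval $[0,\tau]$ and then glues subintervals to reach $[0,T]$, whereas you work directly on $[0,T]$ via the iterated estimate $\|\Phi^n M - \Phi^n\tilde M\| \le \tfrac{1}{n!}\big(\int_0^t g\big)^n\|M-\tilde M\|$; your variant handles the $L^1$-in-time control $\mathfrak{p}$ a bit more cleanly, since the gluing in the paper implicitly relies on the absolute continuity of $s\mapsto \int_0^s|\mathfrak p|$ to make each subinterval a contraction. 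One small over-verification on your side: since $\mathcal{Y}(m,\dot m)=(0,m)$ lands in the $L^2_\omega$ slot, boundedness of $\theta_j\mathcal{Y}$ on $\mathcal H$ only needs $\theta_j\in L^\infty$, not the $H^1_\omega$ multiplier property you checked---but the extra care does no harm.
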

\begin{proof}

We use a fixed-point argument to prove existence and uniqueness.

Let $\tau>0$ be a small constant to be fixed later. For a given initial state \( M_0 \) and control law \( \mathfrak{p} \), we define the following map on the space
\[
X : = \left\{ M(t)=(m(t),\partial_t m(t)) \in C\left([0,\tau], H^1_\omega(\mathbb{R}^d) \times L^2_\omega(\mathbb{R}^d)\right) \,\middle|\,M(0) = M_0 \right\},
\]
\begin{equation}
	\Phi(M)(t) := e^{t\mathcal{X}} M_0 
	+ \int_{0}^t e^{(t-s)\mathcal{X}} 
	\left( 
	- \sum_{j=0}^{2d} \mathfrak{p}_j(s) \theta_j(x) \mathcal{Y} + \mathcal{Z}
	\right) M(s) \, ds, \quad \forall t \in [0,\tau].
\end{equation}

Since \( e^{t\mathcal{X}} \) is a \( C_0 \)-group on \( H^1_\omega(\mathbb{R}^d) \times L^2_\omega(\mathbb{R}^d) \), there exists a constant \( \mathcal{C}(\tau) > 0 \) such that for all \( 0 \le t \le \tau \),
\[
\left\| e^{t\mathcal{X}} \right\|_{H^1_\omega(\mathbb{R}^d) \times L^2_\omega(\mathbb{R}^d) \to H^1_\omega(\mathbb{R}^d) \times L^2_\omega(\mathbb{R}^d)} \le \mathcal{C}(\tau).
\]

Then we have , for any $M(t)=(m(t),\partial_t m(t))\in X$:
\begin{align*}
	\left\| \Phi(M)(t) \right\|_{H^1_\omega \times L^2_\omega}
	&\le \left\| e^{t\mathcal{X}} M_0 \right\|_{H^1_\omega \times L^2_\omega} 
	+ \int_{0}^\tau \left\| e^{(t-s)\mathcal{X}} \left( -\sum_{j=0}^{2d} \mathfrak{p}_j(s) \theta_j(x) \mathcal{Y} + \mathcal{Z} \right) M(s) \right\|_{H^1_\omega \times L^2_\omega} ds \\
	&\le \mathcal{C}(\tau) \left\| M_0 \right\|_{H^1_\omega \times L^2_\omega} 
	+ \mathcal{C}(\tau) \left\| \mathfrak{p} \right\|_{L^1([0,\tau])} \int_0^\tau \left\| m(s) \right\|_{L^2_\omega} ds \\
	&\quad + \mathcal{C}(\tau) \int_0^\tau \left\| \sigma(m(s)) \right\|_{L^2_\omega} ds \\
	&< \infty, \quad \forall t \in [0,\tau],
\end{align*}
thus \( \Phi \) maps \( X \) into itself.

For any \( M^{(1)}(t), M^{(2)}(t) \in X \), for any $t\in [0,\tau]$, we have:
\begin{align*}
	&	\left\| \Phi(M^{(1)})(t) - \Phi(M^{(2)})(t) \right\|_{H^1_\omega \times L^2_\omega}\\
	&\le \int_{0}^\tau \left\| e^{(t-s)\mathcal{X}} \left( -\sum_{j=0}^{2d} \mathfrak{p}_j(s) \theta_j(x) \mathcal{Y} + \mathcal{Z} \right) \left( M^{(1)}(s) - M^{(2)}(s) \right) \right\|_{H^1_\omega \times L^2_\omega} \, ds \\
	&\le \mathcal{C}(\tau) \left\| \mathfrak{p} \right\|_{L^1([0,\tau])} \int_{0}^\tau \left\| m^{(1)}(s) - m^{(2)}(s) \right\|_{L^2_\omega} \, ds \\
	&\quad + \mathcal{C}(\tau) \int_0^\tau \left\| \sigma(m^{(1)}(s)) - \sigma(m^{(2)}(s)) \right\|_{L^2_\omega} \, ds \\
	&\le \mathcal{C}(\tau) \tau \left( \left\| \mathfrak{p} \right\|_{L^1([0,\tau])} + \operatorname{Lip}(\sigma) \right) 
	\left\| M^{(1)} - M^{(2)} \right\|_{C([0,\tau], H^1_\omega \times L^2_\omega)}.
\end{align*}

Since \( e^{t\mathcal{X}} \) is a \( C_0 \)-group on \( H^1_\omega(\mathbb{R}^d) \times L^2_\omega(\mathbb{R}^d) \),  we can choose a sufficiently small time \( \tau > 0 \) such that
\[
\mathcal{C}(\tau)\tau \left( \left\| \mathfrak{p} \right\|_{L^1([0,T])} + \operatorname{Lip}(\sigma) \right) < 1.
\]

Under this condition, the map \( \Phi \) is a contraction on the interval \( [0, \tau] \), and by Banach’s fixed-point theorem, it admits a unique fixed point. This guarantees the existence and uniqueness of the solution on \( [0, \tau] \).

To extend the solution to the full interval \( [0, T] \), we decompose it into subintervals
\[
0 = T_0 < \tau =T_1 < \cdots < n\tau = T_n = T.
\]

On each subinterval \( [T_i, T_{i+1}] \), we apply the same fixed-point argument, using the terminal value of the solution on \( [T_{i-1}, T_i] \) as the initial condition for the next interval. By gluing together the solutions on each subinterval, we obtain a global solution on \( [0, T] \). Uniqueness on each subinterval ensures the uniqueness of the full solution.

For any other initial state \( M_1 = (m_1, \tilde{m}_1) \in H^1_\omega(\mathbb{R}^d) \times L^2_\omega(\mathbb{R}^d) \), and for any \( t \in [0, T] \), we have
\begin{align*}
	& \|\mathcal{S}(t, M_0, \mathfrak{p}) - \mathcal{S}(t, M_1, \mathfrak{p})\|_{H^1_\omega \times L^2_\omega} \\
	\leq\; & \left\| e^{t \mathcal{X}} (M_0 - M_1) \right\|_{H^1_\omega \times L^2_\omega} + \left\| \int_0^t e^{(t-s)\mathcal{X}} \left(- \sum_{j=0}^{2d} \mathfrak{p}_j(s) \theta_j(x) \mathcal{Y} + \mathcal{Z} \right) \big(\mathcal{S}(s, M_0, \mathfrak{p}) - \mathcal{S}(s, M_1, \mathfrak{p}) \big) ds \right\|_{H^1_\omega \times L^2_\omega} \\
	\leq\; & \mathcal{C}(T) \|M_0 - M_1\|_{H^1_\omega \times L^2_\omega} + \mathcal{C}(T) \left( \|\mathfrak{p}\|_{L^1([0,T])} + \operatorname{Lip}(\sigma) \right) \int_0^t \|\mathcal{S}(s, M_0, \mathfrak{p}) - \mathcal{S}(s, M_1, \mathfrak{p})\|_{H^1_\omega \times L^2_\omega} ds.
\end{align*}

Then, by Grönwall's inequality, it follows that
\[
\|\mathcal{S}(t, M_0, \mathfrak{p}) - \mathcal{S}(t, M_1, \mathfrak{p})\|_{H^1_\omega \times L^2_\omega} \leq \mathcal{C}(T) \, e^{T \mathcal{C}(T) \left( \|\mathfrak{p}\|_{L^1([0,T])} + \operatorname{Lip}(\sigma) \right)} \|M_0 - M_1\|_{H^1_\omega \times L^2_\omega},
\]
which establishes the Lipschitz continuity of the solution map with respect to the initial data.
\end{proof}

\subsection{Controlling the displacement}

\begin{prop}\label{prop:control_displacement}
For any initial and final displacements \( m_0, m_T \in H^1_\omega(\mathbb{R}^d) \) that are finite linear combinations of eigenfunctions \(\psi_{\mathbf{n}}\), and for any \( T > 0 \), there exist a time \(\tau \in [0, T)\) and an initial velocity \( \tilde{m}_0:=v_0 \in L^2_\omega(\mathbb{R}^d) \) such that the solution \( m \) of \eqref{eqn:ControlPDE_linear} with zero control \(\mathfrak{p} = 0\) and initial state \((m_0, v_0)\) satisfies
\[
m(\tau) = m_T
\]
in the space \( H^1_\omega(\mathbb{R}^d) \).
\end{prop}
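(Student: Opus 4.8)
The plan is to use the explicit spectral representation \eqref{eqn:linear_displacement_evolution} of the zero-control evolution $e^{t\mathcal{X}}$, together with the observation that this flow leaves invariant the finite-dimensional subspace spanned by the Hermite eigenfunctions occurring in $m_0$ and $m_T$. This reduces the problem to a finite, decoupled system of scalar equations for the Fourier--Hermite coefficients of the unknown initial velocity $v_0$.

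Concretely, I would first write $m_0=\sum_{\mathbf{n}\in F}a_{\mathbf{n}}\psi_{\mathbf{n}}$ and $m_T=\sum_{\mathbf{n}\in F}b_{\mathbf{n}}\psi_{\mathbf{n}}$, where $F\subset\mathbb{N}^d$ is the finite union of the supports of the two expansions and $a_{\mathbf{n}}=\langle m_0,\psi_{\mathbf{n}}\rangle$, $b_{\mathbf{n}}=\langle m_T,\psi_{\mathbf{n}}\rangle$. Looking for $v_0=\sum_{\mathbf{n}\in F}c_{\mathbf{n}}\psi_{\mathbf{n}}$, formula \eqref{eqn:linear_displacement_evolution} with $\mathfrak{p}=0$ shows that the corresponding solution stays supported on $F$, with
\[
\langle m(\tau),\psi_{\mathbf{0}}\rangle=a_{\mathbf{0}}+c_{\mathbf{0}}\tau,\qquad
\langle m(\tau),\psi_{\mathbf{n}}\rangle=a_{\mathbf{n}}\cos(\sqrt{\rho_{\mathbf{n}}}\,\tau)+c_{\mathbf{n}}\frac{\sin(\sqrt{\rho_{\mathbf{n}}}\,\tau)}{\sqrt{\rho_{\mathbf{n}}}}\quad(\mathbf{n}\in F\setminus\{\mathbf{0}\}).
\]
Since $m(\tau)$ and $m_T$ are then both finite combinations of eigenfunctions, the requirement $m(\tau)=m_T$ in $H^1_\omega(\mathbb{R}^d)$ is equivalent to the scalar system $a_{\mathbf{0}}+c_{\mathbf{0}}\tau=b_{\mathbf{0}}$ together with $a_{\mathbf{n}}\cos(\sqrt{\rho_{\mathbf{n}}}\tau)+c_{\mathbf{n}}\sin(\sqrt{\rho_{\mathbf{n}}}\tau)/\sqrt{\rho_{\mathbf{n}}}=b_{\mathbf{n}}$ for $\mathbf{n}\in F\setminus\{\mathbf{0}\}$.

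It then remains to choose $\tau\in(0,T)$ so that each of these equations can be solved for the $c_{\mathbf{n}}$: the $\mathbf{n}=\mathbf{0}$ equation is solvable as soon as $\tau\neq0$, and for $\mathbf{n}\neq\mathbf{0}$ (where $\rho_{\mathbf{n}}=2|\mathbf{n}|>0$) it is solvable as soon as $\sin(\sqrt{\rho_{\mathbf{n}}}\tau)\neq0$, i.e.\ as soon as $\tau$ avoids the finitely many points of the form $k\pi/\sqrt{\rho_{\mathbf{n}}}$ lying in $(0,T)$ with $\mathbf{n}\in F\setminus\{\mathbf{0}\}$ and $k\in\mathbb{Z}$. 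Picking such a $\tau$, I would set $c_{\mathbf{0}}=(b_{\mathbf{0}}-a_{\mathbf{0}})/\tau$ and $c_{\mathbf{n}}=\sqrt{\rho_{\mathbf{n}}}\bigl(b_{\mathbf{n}}-a_{\mathbf{n}}\cos(\sqrt{\rho_{\mathbf{n}}}\tau)\bigr)/\sin(\sqrt{\rho_{\mathbf{n}}}\tau)$, and define $v_0=\sum_{\mathbf{n}\in F}c_{\mathbf{n}}\psi_{\mathbf{n}}$, which is a finite combination of Hermite eigenfunctions and hence lies in $L^2_\omega(\mathbb{R}^d)$. By construction the solution of \eqref{eqn:ControlPDE_linear} with zero control and initial state $(m_0,v_0)$ satisfies $m(\tau)=m_T$.

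There is no genuine analytic difficulty here; the only points requiring care are the decoupling of the modes---which relies precisely on the fact that the free wave flow preserves each eigenspace of the Ornstein--Uhlenbeck operator---and the elementary exclusion of the finitely many instants $\tau$ at which a relevant sine vanishes. I would also note that the degenerate case $m_0=m_T$ is covered trivially by taking $\tau=0$ and $v_0=0$, which is consistent with the range $\tau\in[0,T)$ in the statement.
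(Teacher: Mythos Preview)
Your proof is correct and follows essentially the same approach as the paper: both use the explicit spectral formula \eqref{eqn:linear_displacement_evolution} to reduce the problem to a finite decoupled system for the Hermite coefficients of $v_0$, then choose $\tau\in(0,T)$ so that all relevant $\sin(\sqrt{\rho_{\mathbf{n}}}\tau)$ are nonzero and solve explicitly for the $c_{\mathbf{n}}$. The only cosmetic difference is that the paper picks $\tau$ small enough that $\sqrt{\rho_{\mathbf{n}}}\tau<\pi$ for every relevant mode, whereas you argue by excluding the finitely many bad instants in $(0,T)$; both work equally well.
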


\begin{proof}
Denote 
\[
S = \{ \mathbf{n}\in\mathbb{N}^d\setminus\{\mathbf{0}\} \mid \langle m_0, \psi_{\mathbf{n}} \rangle \neq 0 \text{ or } \langle m_T, \psi_{\mathbf{n}} \rangle \neq 0 \}.
\]

It follows from our assumption that \( |S| < \infty \), thus there exists a constant \(\mathcal{C}\) such that
\[
\mathcal{C} \geq \sup \{ \sqrt{\rho_{\mathbf{n}}} \mid \mathbf{n} \in S \}.
\]

Hence, there exists \(0 < \tau < \min \{ T, \frac{\pi}{\mathcal{C}} \}\) such that for all $\mathbf{n}\in S$,
\[
0 < \sqrt{\rho_{\mathbf{n}}} \tau < \pi,
\]
and thus \(\sin(\sqrt{\rho_{\mathbf{n}}} \tau) \neq 0\) for all \(\mathbf{n} \in S\).

We can then define
\[
v_0 = \left( \frac{\langle m_T, \psi_0 \rangle - \langle m_0, \psi_0 \rangle}{\tau} \right) \psi_0 + \sum_{\mathbf{n} \in S} \left( \frac{\langle m_T, \psi_{\mathbf{n}} \rangle}{\sin(\sqrt{\rho_{\mathbf{n}}} \tau) / \sqrt{\rho_{\mathbf{n}}}} - \frac{\langle m_0, \psi_{\mathbf{n}} \rangle \cos(\sqrt{\rho_{\mathbf{n}}} \tau)}{\sin(\sqrt{\rho_{\mathbf{n}}} \tau) / \sqrt{\rho_{\mathbf{n}}}} \right) \psi_{\mathbf{n}}.
\]

Then \(v_0 \in L^2_\omega(\mathbb{R}^d)\) and, by the explicit expressions of the solution \(m\) under zero control, given in \eqref{eqn:linear_displacement_evolution} and \eqref{eqn:linear_velocity_evolution}, we obtain
\[
m(\tau) = m_T
\]
by setting \(\tilde{m}_0 = v_0\) in \eqref{eqn:linear_displacement_evolution}.
\end{proof}
\begin{remark}\label{rmk:finite_combination}
It can be observed that the chosen velocity \(v_0\) and \(\partial_t m(\tau)\) for the solution \(m\) of \eqref{eqn:ControlPDE_linear} with zero control \(\mathfrak{p} = 0\) and initial state \((m_0, v_0)\) are also finite linear combinations of the eigenfunctions \(\psi_{\mathbf{n}}\).

\end{remark}

\subsection{Controlling the velocity}

\subsubsection{Saturating space}

We recall the following mapping from \(\mathbb{R}\) to \((-\pi,\pi)\) defined in \eqref{eqn:formula_for_phi}:
\[
\phi(x) = 2\sqrt{\pi} \int_0^x e^{-s^2} \, ds.
\]

Next, we define a mapping from \(\mathbb{R}^d\) to \((-\pi,\pi)^d\):
\[
\Phi(x_1, \ldots, x_d) = \big(\phi(x_1), \ldots, \phi(x_d)\big).
\]

We then define
\[
\mathcal{V}_0^\Phi := \mathrm{span}\big\{ 1, \cos( e_1 \cdot \Phi(x)), \sin( e_1 \cdot \Phi(x)), \dots, \cos( e_d \cdot \Phi(x)), \sin( e_d \cdot \Phi(x)) \big\}\quad \text{for } x\in\mathbb{R}^d,
\]
and, for each \(n \geq 1\), we define \(\mathcal{V}_n^\Phi\) iteratively as
\[
\mathcal{V}_n^\Phi := \Bigl\{ \psi_0 - \sum_{i=1}^N \psi_i^2 \;\big|\; \psi_0, \dots, \psi_N \in \mathcal{V}_{n-1}^\Phi, \; N \in \mathbb{N} \Bigr\}.
\]

Finally, the saturating space is defined as
\[
\mathcal{V}_\infty^\Phi := \bigcup_{n=0}^\infty \mathcal{V}_n^\Phi.
\]

Similarly, we can start with
\[
\mathcal{V}_0 := \mathrm{span}\big\{ 1, \cos( e_1 \cdot x), \sin( e_1 \cdot x), \dots, \cos( e_d \cdot x), \sin( e_d \cdot x) \big\} \quad \text{for } x \in (-\pi,\pi)^d,
\]
and construct the corresponding saturating space \(\mathcal{V}_\infty = \bigcup_{n=0}^\infty \mathcal{V}_n\), where
\[
\mathcal{V}_n := \Bigl\{ \psi_0 - \sum_{i=1}^N \psi_i^2 \;\big|\; \psi_0, \dots, \psi_N \in \mathcal{V}_{n-1}, \; N \in \mathbb{N} \Bigr\}.
\]

\begin{prop}\label{prop:saturating_space}
Given any set of vectors \( V = \{v_0, \ldots, v_n\} \) with \( v_0 = 0 \), we define the corresponding basis set for \( V \) as
\[
B(V) := \{1, \cos( v_1 \cdot x), \sin( v_1 \cdot x), \ldots, \cos( v_n \cdot x), \sin( v_n \cdot x)\}.
\]

Starting from \( V_0 = \{0, e_1, \ldots, e_d\} \), we inductively define
\[
V_{n+1} = \{ v_i \pm v_j \mid v_i, v_j \in V_n \}.
\]

Then, for all \( n \geq 0 \), we have:
\begin{enumerate}
	\item \label{saturatinglem:enm1} \(\mathcal{V}_n\) is a vector space.
	\item \label{saturatinglem:enm2} \(\mathrm{span}(B(V_n)) = \mathcal{V}_n\).
	\item \label{saturatinglem:enm3} The saturating space \( \mathcal{V}_\infty := \bigcup_{n = 0}^\infty \mathcal{V}_n \) satisfies
	\[
	\mathcal{V}_\infty = \mathrm{span} \left( \{ \cos(v \cdot x), \sin(v \cdot x) \mid v \in \mathbb{Z}^d \} \right),
	\]
	and is therefore dense in \( L^p((-\pi,\pi)^d) \) for all \( 1 \leq p < \infty \).
\end{enumerate}

\end{prop}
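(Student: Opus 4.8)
\emph{Overview of the plan.} I would prove the first two assertions simultaneously by induction on $n$, and then read off the third. The base case $n=0$ holds by definition: since $V_0=\{0,e_1,\dots,e_d\}$, the set $B(V_0)$ is exactly the generating set of $\mathcal{V}_0$, so $\mathcal{V}_0=\mathrm{span}(B(V_0))$ is a vector space. For the inductive step assume $\mathcal{V}_n$ is a vector space with $\mathcal{V}_n=\mathrm{span}(B(V_n))$. I first record the elementary closure properties of $\mathcal{V}_{n+1}$: it contains $\mathcal{V}_n$ (take the empty sum of squares, $N=0$), it is closed under addition (add the linear parts, concatenate the two lists of squares) and under multiplication by nonnegative scalars (since $\lambda\psi^2=(\sqrt{\lambda}\,\psi)^2$), and $-\psi^2\in\mathcal{V}_{n+1}$ for every $\psi\in\mathcal{V}_n$. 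Also $\phi^2\in\mathcal{V}_{n+1}$ for each $\phi\in B(V_n)$, because $1^2=1$, $\cos^2(a\cdot x)=1-(\sin(a\cdot x))^2$ and $\sin^2(a\cdot x)=1-(\cos(a\cdot x))^2$ are each of the form $\psi_0-\psi_1^2$ with $\psi_0,\psi_1\in\mathcal{V}_n$.

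\emph{The heart of the argument.} Next I would show that $\phi\phi'\in\mathcal{V}_{n+1}$ \emph{and} $-\phi\phi'\in\mathcal{V}_{n+1}$ for all $\phi,\phi'\in B(V_n)$. For $+\phi\phi'$ this is the identity $\phi\phi'=\tfrac12\phi^2+\tfrac12\phi'^2-\tfrac12(\phi-\phi')^2$, which exhibits $\phi\phi'$ as a sum of three elements of $\mathcal{V}_{n+1}$. For $-\phi\phi'$: if one factor is the constant $1$ then $-\phi\phi'=-\phi\in\mathcal{V}_n$ because $\mathcal{V}_n$ is a vector space; otherwise one uses a Pythagorean identity such as
\[
-\cos(a\cdot x)\cos(b\cdot x)=1-\tfrac12\big(\sin(a\cdot x)\big)^2-\tfrac12\big(\sin(b\cdot x)\big)^2-\tfrac12\big(\cos(a\cdot x)+\cos(b\cdot x)\big)^2,
\]
together with the analogous formulas for $\sin(a\cdot x)\sin(b\cdot x)$ and $\sin(a\cdot x)\cos(b\cdot x)$ obtained by the same $\sin^2+\cos^2=1$ bookkeeping; each right-hand side is a sum of elements of $\mathcal{V}_{n+1}$. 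Expanding $\psi^2=\sum_{k,\ell}c_kc_\ell\,\phi_k\phi_\ell$ for $\psi=\sum_k c_k\phi_k\in\mathcal{V}_n$ and grouping the pairs by the sign of $c_kc_\ell$ then yields the key fact: $\psi^2\in\mathcal{V}_{n+1}$ for every $\psi\in\mathcal{V}_n$.

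\emph{Closing the induction.} This key fact gives the vector-space property of $\mathcal{V}_{n+1}$: the only axiom still to verify is closure under negation, and $-\big(\psi_0-\sum_i\psi_i^2\big)=(-\psi_0)+\sum_i\psi_i^2\in\mathcal{V}_{n+1}$ since $-\psi_0\in\mathcal{V}_n\subseteq\mathcal{V}_{n+1}$ and each $\psi_i^2\in\mathcal{V}_{n+1}$. It also gives $B(V_{n+1})\subseteq\mathcal{V}_{n+1}$: every element of $B(V_{n+1})$ is $1$, or $\cos((a\pm b)\cdot x)$, or $\sin((a\pm b)\cdot x)$ with $a,b\in V_n$, and the sum-angle formulas write these as $\mathbb{Z}$-linear combinations of the products $\cos(a\cdot x)\cos(b\cdot x)$, $\sin(a\cdot x)\sin(b\cdot x)$, $\sin(a\cdot x)\cos(b\cdot x)$, all of which lie in the now-established vector space $\mathcal{V}_{n+1}$. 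Conversely $\mathcal{V}_{n+1}\subseteq\mathrm{span}(B(V_{n+1}))$ follows from the product-to-sum formulas, because $V_n+V_n$ and $V_n-V_n$ are contained in $V_{n+1}$, so the square of any element of $\mathrm{span}(B(V_n))$ linearizes into $\mathrm{span}(B(V_{n+1}))$, while $\psi_0\in\mathcal{V}_n\subseteq\mathrm{span}(B(V_{n+1}))$. Hence $\mathcal{V}_{n+1}=\mathrm{span}(B(V_{n+1}))$, completing the induction and proving the first two assertions.

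\emph{The saturating space, and the hard step.} Since $0\in V_n$ for all $n$ we have $V_n\subseteq V_{n+1}$, so the spaces increase and $\mathcal{V}_\infty=\bigcup_n\mathrm{span}(B(V_n))=\mathrm{span}\big(\bigcup_n B(V_n)\big)$. The set $\bigcup_n V_n$ is a subgroup of $(\mathbb{Z}^d,+)$ — it contains $0$, and if $u,v$ lie in a common $V_n$ then $u\pm v\in V_{n+1}$ — and it contains $e_1,\dots,e_d$, hence equals $\mathbb{Z}^d$. Therefore $\mathcal{V}_\infty=\mathrm{span}\{\cos(v\cdot x),\sin(v\cdot x):v\in\mathbb{Z}^d\}$, the space of $2\pi$-periodic trigonometric polynomials on $(-\pi,\pi)^d$, which is dense in $L^p((-\pi,\pi)^d)$ for every $1\le p<\infty$ (approximate an $L^p$ function by a compactly supported continuous one, extend it periodically, and apply the Stone--Weierstrass theorem, or Fej\'er summation). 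The step I expect to be the main obstacle is proving $-\phi\phi'\in\mathcal{V}_{n+1}$ for products of two basis functions: this is what forces the explicit Pythagorean identities above, and it is precisely the ingredient that upgrades $\mathcal{V}_{n+1}$ from an a priori convex cone to a genuine vector space.
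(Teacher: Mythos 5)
Your proof is correct and follows essentially the same approach as the paper: both rest on the Pythagorean trick to realize $\pm$(trigonometric basis functions) inside $\mathcal{V}_{n+1}$ as $1$ minus a sum of squares, use product-to-sum identities for the reverse inclusion $\mathcal{V}_{n+1}\subset\mathrm{span}(B(V_{n+1}))$, and conclude via $\bigcup_n V_n=\mathbb{Z}^d$ and density of trigonometric polynomials. The only difference is bookkeeping: the paper writes the key identities directly for $\pm\cos((v_i\pm v_j)\cdot x)$ and $\pm\sin((v_i\pm v_j)\cdot x)$, whereas you first establish $\pm\phi\phi'\in\mathcal{V}_{n+1}$ for pairwise products (via polarization and Pythagorean identities) and then pass to $B(V_{n+1})$ by the angle-addition formulas — the same computations composed in a different order, not a genuinely different route.
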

\begin{proof}
We first prove \eqref{saturatinglem:enm2} by induction.

It is clear that
\[
\mathrm{span}(B(V_0)) = \mathcal{V}_0.
\]

Suppose that \(\mathrm{span}(B(V_k)) = \mathcal{V}_k\) for some \(k \geq 0\). Note that \(V_k\) is a finite set, which we denote by
\[
V_k = \{v_0, \ldots, v_n\} \quad \text{with} \quad v_0 = 0.
\]

Then
\[
V_{k+1} = \{ v_i \pm v_j \mid v_i, v_j \in V_k \},
\]
and
\[
B(V_{k+1}) = \{1, \cos( (v_i \pm v_j) \cdot x), \sin( (v_i \pm v_j) \cdot x) \mid v_i, v_j \in V_k \}.
\]

Since \(-1 = 1- (\sqrt{2})^2\), and
\begin{align*}
	\pm \cos( (v_i + v_j) \cdot x) &= 1 - \frac{1}{2} \bigl( \cos( v_i \cdot x) \mp \cos( v_j \cdot x) \bigr)^2 - \frac{1}{2} \bigl( \sin( v_i \cdot x) \pm \sin( v_j \cdot x) \bigr)^2, \\
	\pm \cos( (v_i - v_j) \cdot x) &= 1 - \frac{1}{2} \bigl( \cos( v_i \cdot x) \mp \cos( v_j \cdot x) \bigr)^2 - \frac{1}{2} \bigl( \sin( v_i \cdot x) \mp \sin( v_j \cdot x) \bigr)^2, \\
	\pm \sin( (v_i + v_j) \cdot x) &= 1 - \frac{1}{2} \bigl( \sin( v_i \cdot x) \mp \cos( v_j \cdot x) \bigr)^2 - \frac{1}{2} \bigl( \cos( v_i \cdot x) \mp \sin( v_j \cdot x) \bigr)^2, \\
	\pm \sin( (v_i - v_j) \cdot x) &= 1 - \frac{1}{2} \bigl( \sin( v_i \cdot x) \mp \cos( v_j \cdot x) \bigr)^2 - \frac{1}{2} \bigl( \cos( v_i \cdot x) \pm \sin( v_j \cdot x) \bigr)^2.
\end{align*}

Using the inductive hypothesis \(\mathrm{span}(B(V_k)) = \mathcal{V}_k\), it follows that
\[
\pm 1, \quad \pm \cos( (v_i \pm v_j) \cdot x), \quad \pm \sin( (v_i \pm v_j) \cdot x) \in \mathcal{V}_{k+1}.
\]

Since \(\mathcal{V}_{k+1}\) is clearly closed under addition and multiplication by non-negative scalars, we have
\begin{equation}\label{saturatinglem:1}
	\mathrm{span}(B(V_{k+1})) \subset \mathcal{V}_{k+1}.
\end{equation}

By definition, it is also clear that
\begin{equation}\label{saturatinglem:2}
	\mathcal{V}_{k+1} \subset \mathrm{span}\big\{ \psi_i, \psi_j^2 \mid \psi_i, \psi_j \in \mathcal{V}_k \big\}.
\end{equation}

Since \(\mathrm{span}(B(V_k)) = \mathcal{V}_k\), denote
\[
B(V_k) = \{ \varphi_0, \ldots, \varphi_m \}
\]
(where each \(\varphi_i\) is either the constant function \(1\) or a sine/cosine function). Then for any \(\psi \in \mathcal{V}_k\), we have
\[
\psi = \sum_{\text{finite}} a_i \varphi_i, \quad \text{and} \quad \psi^2 = \left( \sum_{\text{finite}} a_i \varphi_i \right)^2,
\]
thus
\begin{equation}\label{saturatinglem:3}
	\psi, \psi^2 \in \mathrm{span} \big\{ \varphi_j, \varphi_j \varphi_k \mid \varphi_j, \varphi_k \in B(V_k) \big\}.
\end{equation}

Hence, by \eqref{saturatinglem:2}, it follows that
\begin{equation}\label{saturatinglem:4}
	\mathcal{V}_{k+1} \subset \mathrm{span} \big\{ \varphi_j, \varphi_j \varphi_k \mid \varphi_j, \varphi_k \in B(V_k) \big\}.
\end{equation}

By definition, it is clear that for any \(\varphi_j \in B(V_k)\),
\[
\varphi_j \in B(V_k) \subset\mathrm{span}(B(V_{k+1})).
\]

For any \(\varphi_j, \varphi_k \in B(V_k)\), there are three possible cases for the product \(\varphi_j \varphi_k\) (where we represent \(1\) as \(\cos( 0 \cdot x)\)):

\begin{enumerate}
	\item \(\varphi_j \varphi_k = \cos( v_l\cdot x) \cos( v_m\cdot x) = \frac{1}{2} \big[ \cos( (v_l + v_m) \cdot x) + \cos( (v_l - v_m) \cdot x) \big]\) for some \(v_l, v_m \in V_k\).
	\item \(\varphi_j \varphi_k = \sin( v_l\cdot x) \cos( v_m\cdot x) = \frac{1}{2} \big[ \sin( (v_l + v_m) \cdot x) + \sin( (v_l - v_m)\cdot x) \big]\) for some \(v_l, v_m \in V_k\).
	\item \(\varphi_j \varphi_k = \sin( v_l\cdot x) \sin( v_m\cdot x) = \frac{1}{2} \big[ \cos( (v_l + v_m)\cdot x) - \cos( (v_l - v_m)\cdot x) \big]\) for some \(v_l, v_m \in V_k\).
\end{enumerate}

In all cases, we have \(\varphi_j \varphi_k \in \mathrm{span}(B(V_{k+1}))\).

Thus,
\begin{equation}\label{saturatinglem:5}
	\mathrm{span}\{\varphi_j, \varphi_j \varphi_k \mid \varphi_j, \varphi_k \in B(V_k)\} \subset \mathrm{span}(B(V_{k+1})).
\end{equation}

Then, by \eqref{saturatinglem:1}, \eqref{saturatinglem:4}, and \eqref{saturatinglem:5}, we have
\[
\mathrm{span}(B(V_{k+1})) \subset \mathcal{V}_{k+1} \subset \mathrm{span}\{\varphi_j, \varphi_j \varphi_k \mid \varphi_j, \varphi_k \in B(V_k)\} \subset \mathrm{span}(B(V_{k+1})).
\]

Hence,
\[
\mathrm{span}(B(V_{k+1})) = \mathcal{V}_{k+1}.
\]

Since we already know that \(\mathrm{span}(B(V_0)) = \mathcal{V}_0\), it follows by induction that
\begin{equation}\label{saturatinglem:6}
	\mathrm{span}(B(V_n)) = \mathcal{V}_n \quad \text{for all } n \geq 0.
\end{equation}

Moreover, \eqref{saturatinglem:enm1} follows directly from \eqref{saturatinglem:6}.

It follows from the construction of the sets \(V_n\) that
\[
\bigcup_{n=0}^\infty V_n = \mathbb{Z}^d.
\]

Then \eqref{saturatinglem:6} implies
\[
\mathcal{V}_\infty = \mathrm{span} \left( \{ \cos(v \cdot x), \sin(v \cdot x) \mid v \in \mathbb{Z}^d \} \right),
\]
thus \(\mathcal{V}_\infty\) is dense in \(L^p((-\pi,\pi)^d)\) for all \(1 \leq p < \infty\), which establishes \eqref{saturatinglem:enm3}.
\end{proof}
\begin{prop}\label{prop:saturating_space_Phi}
Given a set of vectors \( V = \{v_0, \ldots, v_n\} \) with \( v_0 = 0 \), we define the corresponding \(\Phi\)-basis set for \( V \) as
\[
B^\Phi(V) := \{1, \cos( v_1 \cdot \Phi(x)), \sin( v_1 \cdot \Phi(x)), \ldots, \cos( v_n \cdot \Phi(x)), \sin( v_n \cdot \Phi(x)) \}.
\]

Starting with \( V_0 = \{0, e_1, \ldots, e_d\} \), we inductively define
\[
V_{n+1} = \{ v_i \pm v_j \mid v_i, v_j \in V_n \}.
\]

Then, for all \( n \geq 0 \), the following hold:
\begin{enumerate}
	\item \label{saturatinglem_Phi:enm1} \(\mathcal{V}_n^\Phi\) is a vector space.
	\item \label{saturatinglem_Phi:enm2} \(\mathrm{span}(B^\Phi(V_n)) = \mathcal{V}_n^\Phi\).
	\item \label{saturatinglem_Phi:enm3} The saturating space \(\mathcal{V}_\infty^\Phi := \bigcup_{n=0}^\infty \mathcal{V}_n^\Phi\) satisfies
	\[
	\mathcal{V}_\infty^\Phi = \mathrm{span} \left( \{ \cos(v \cdot \Phi(x)), \sin(v \cdot \Phi(x)) \mid v \in \mathbb{Z}^d \} \right),
	\]
	and \(\mathcal{V}_\infty^\Phi\) is dense in \(L^p_\omega(\mathbb{R}^d)\) for all \( 1 \leq p < \infty \).
	\item \label{saturatinglem_Phi:enm4} \(\mathcal{V}_n^\Phi \subset W^{1,\infty}(\mathbb{R}^d)\).
\end{enumerate}

\end{prop}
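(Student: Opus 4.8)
The plan is to reduce everything to Proposition~\ref{prop:saturating_space} through the change of variables $y=\Phi(x)$. Since $\phi'(s)=2\sqrt{\pi}\,e^{-s^2}>0$ and $\phi(\pm\infty)=\pm 2\sqrt{\pi}\int_0^\infty e^{-s^2}\,ds=\pm\pi$, the map $\phi$ is a smooth increasing bijection from $\mathbb{R}$ onto $(-\pi,\pi)$, so $\Phi=(\phi,\dots,\phi)$ is a $C^\infty$ diffeomorphism of $\mathbb{R}^d$ onto $(-\pi,\pi)^d$. Let $T$ denote the pullback operator $Tf:=f\circ\Phi$, which is an injective unital algebra homomorphism. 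First I would show, by induction on $n$, that $T(B(V_n))=B^\Phi(V_n)$ and $T(\mathcal{V}_n)=\mathcal{V}_n^\Phi$: the base case is immediate from the definitions of $\mathcal{V}_0$ and $\mathcal{V}_0^\Phi$, and in the inductive step one uses that $T$ commutes with the operation $\psi_0-\sum_{i=1}^N\psi_i^2$ (because $T$ preserves sums, scalar multiples and squares), together with the inductive identity $T(\mathcal{V}_{n-1})=\mathcal{V}_{n-1}^\Phi$, to obtain both inclusions.

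Once this is in place, \eqref{saturatinglem_Phi:enm1} and \eqref{saturatinglem_Phi:enm2} are immediate: $\mathcal{V}_n^\Phi=T(\mathcal{V}_n)$ is the image of the vector space $\mathcal{V}_n$ (Proposition~\ref{prop:saturating_space}\eqref{saturatinglem:enm1}) under the linear map $T$, hence a vector space; and $\mathrm{span}(B^\Phi(V_n))=T(\mathrm{span}(B(V_n)))=T(\mathcal{V}_n)=\mathcal{V}_n^\Phi$ by Proposition~\ref{prop:saturating_space}\eqref{saturatinglem:enm2}. Taking the union over $n$ and applying $T$ to the conclusion of Proposition~\ref{prop:saturating_space}\eqref{saturatinglem:enm3} gives the first identity in \eqref{saturatinglem_Phi:enm3},
\[
\mathcal{V}_\infty^\Phi=T(\mathcal{V}_\infty)=\mathrm{span}\bigl(\{\cos(v\cdot\Phi(x)),\,\sin(v\cdot\Phi(x))\mid v\in\mathbb{Z}^d\}\bigr).
\]
For the density assertion in \eqref{saturatinglem_Phi:enm3}, I would compute the Jacobian $\det D\Phi(x)=\prod_{k=1}^d\phi'(x_k)=(2\sqrt{\pi})^d e^{-|x|^2}=(2\sqrt{\pi})^d\omega(x)$, so that for every measurable $g$ on $(-\pi,\pi)^d$ one has $\|g\|_{L^p((-\pi,\pi)^d)}^p=(2\sqrt{\pi})^d\|Tg\|_{L^p_\omega(\mathbb{R}^d)}^p$. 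Thus $T$ is, up to a multiplicative constant, an isometric isomorphism of $L^p((-\pi,\pi)^d)$ onto $L^p_\omega(\mathbb{R}^d)$, and since $\mathcal{V}_\infty$ is dense in $L^p((-\pi,\pi)^d)$ for all $1\le p<\infty$ by Proposition~\ref{prop:saturating_space}\eqref{saturatinglem:enm3}, its image $\mathcal{V}_\infty^\Phi$ is dense in $L^p_\omega(\mathbb{R}^d)$.

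Finally, \eqref{saturatinglem_Phi:enm4} I would prove by a separate direct induction on $n$. For $n=0$, the generators $1$, $\cos(\phi(x_i))$, $\sin(\phi(x_i))$ are bounded by $1$, and their first-order partial derivatives are bounded because $|\phi'(x_i)|=2\sqrt{\pi}\,e^{-x_i^2}\le 2\sqrt{\pi}$; hence $\mathcal{V}_0^\Phi\subset W^{1,\infty}(\mathbb{R}^d)$. For the inductive step, if $\psi_0,\dots,\psi_N\in\mathcal{V}_{n-1}^\Phi\subset W^{1,\infty}(\mathbb{R}^d)$, then each $\psi_i^2\in W^{1,\infty}(\mathbb{R}^d)$ by the Leibniz rule ($\partial_k(\psi_i^2)=2\psi_i\,\partial_k\psi_i\in L^\infty$), so $\psi_0-\sum_{i=1}^N\psi_i^2\in W^{1,\infty}(\mathbb{R}^d)$, giving $\mathcal{V}_n^\Phi\subset W^{1,\infty}(\mathbb{R}^d)$. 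The argument is largely bookkeeping; the only points needing care are verifying that $T$ intertwines the two inductive set-constructions so that $T(\mathcal{V}_n)=\mathcal{V}_n^\Phi$ holds as an \emph{equality} of sets, and that the Jacobian of $\Phi$ reproduces exactly the weight $\omega=e^{-|x|^2}$ — which is precisely why $\phi$ was defined with the normalizing factor $2\sqrt{\pi}$.
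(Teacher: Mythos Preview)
Your proposal is correct and follows essentially the same approach as the paper: reduce parts \eqref{saturatinglem_Phi:enm1}--\eqref{saturatinglem_Phi:enm3} to Proposition~\ref{prop:saturating_space} via the change of variables $y=\Phi(x)$, and use the Jacobian identity $\det D\Phi(x)=(2\sqrt{\pi})^d e^{-|x|^2}$ to transfer density from $L^p((-\pi,\pi)^d)$ to $L^p_\omega(\mathbb{R}^d)$. Your packaging via the pullback operator $T$ is somewhat cleaner than the paper's treatment, and for \eqref{saturatinglem_Phi:enm4} you give a direct induction on the $\psi_0-\sum\psi_i^2$ structure whereas the paper instead invokes the basis characterization from \eqref{saturatinglem_Phi:enm2} to observe that each generator $\cos(v\cdot\Phi(x))$, $\sin(v\cdot\Phi(x))$ lies in $W^{1,\infty}$; both arguments are straightforward.
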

\begin{proof}
Statements \eqref{saturatinglem_Phi:enm1} and \eqref{saturatinglem_Phi:enm2} follow from the same proof as \eqref{saturatinglem:enm1} and \eqref{saturatinglem:enm2} in Proposition \ref{prop:saturating_space}.

For \eqref{saturatinglem_Phi:enm3}, combining \eqref{saturatinglem_Phi:enm2} with \(\bigcup_{n=0}^\infty V_n = \mathbb{Z}^d\) implies
\[
\mathcal{V}_\infty^\Phi = \mathrm{span} \left( \{ \cos(v \cdot \Phi(x)), \sin(v \cdot \Phi(x)) \mid v \in \mathbb{Z}^d \} \right),
\]

For the density in \(L^p_\omega(\mathbb{R}^d)\), we argue as follows:

For any \(1 \le p < \infty\) and any \(f \in L^p_\omega(\mathbb{R}^d)\), define \(\tilde{f}: (-\pi,\pi)^d \to \mathbb{R}\) by
\[
\tilde{f}(y) = f(\Phi^{-1}(y)).
\]

Then
\begin{align*}
	\int_{(-\pi,\pi)^d} |\tilde{f}(y)|^p \, dy 
	&= \int_{(-\pi,\pi)^d} |f(\Phi^{-1}(y))|^p \, dy \\
	&= \int_{\mathbb{R}^d} |f(x)|^p J(\Phi)(x) \, dx \\
	&= \left(2 \sqrt{\pi}\right)^d \int_{\mathbb{R}^d} |f(x)|^p e^{-|x|^2} \, dx < \infty,
\end{align*}
where \(J(\Phi)(x) = \det \nabla \Phi(x) = \left(2 \sqrt{\pi}\right)^d e^{-|x|^2}\).

Thus, \(\tilde{f} \in L^p((-\pi,\pi)^d)\). By \eqref{saturatinglem_Phi:enm3} in Proposition \ref{prop:saturating_space}, for any \(\varepsilon > 0\), there exists \(\tilde{\psi} \in \mathcal{V}_N\) for some \(N\) such that
\[
\|\tilde{f} - \tilde{\psi}\|_{L^p((-\pi,\pi)^d)} < \varepsilon.
\]

By construction, there exists \(\psi \in \mathcal{V}_N^\Phi\) such that \(\tilde{\psi} = \psi \circ \Phi^{-1}\), and moreover,
\[
\|f - \psi\|_{L^p_\omega(\mathbb{R}^d)} < \left(\frac{1}{2 \sqrt{\pi}}\right)^d \varepsilon.
\]

This shows that \(\mathcal{V}_\infty^\Phi\) is dense in \(L^p_\omega(\mathbb{R}^d)\).

The statement \eqref{saturatinglem_Phi:enm3} implies that \(\mathcal{V}_n^\Phi\) is spanned by functions of the form \(\cos( v \cdot \Phi(x))\) and \(\sin( v \cdot \Phi(x))\) for \(v\in\mathbb{Z}^d\). Since these functions have bounded derivatives, it readily follows that
\[
\mathcal{V}_n^\Phi \subset W^{1,\infty}(\mathbb{R}^d),
\]
which establishes \eqref{saturatinglem_Phi:enm4}.
\end{proof}

\subsubsection{Elementary control steps}

In this subsection, we establish certain element control estimates using the Feynman (Dyson) expansion techniques commonly employed in mathematical physics \cite{erdHos2007quantum,erdHos2008quantum,LukkarinenSpohn:KLW:2007,LukkarinenSpohn:WNS:2011,POZZOLI2024421}.

\begin{prop}\label{prop:lemma_1}
Given any \((m_0,\tilde{m}_0)\in H^1_\omega(\mathbb{R}^d)\times L^2_\omega(\mathbb{R}^d)\) and \(\zeta \in L^\infty(\mathbb{R}^d)\), we have
\begin{equation}\label{eqn:limit_order_1}
	\left\|  e^{\tau \left(\mathcal{X}+\frac{\zeta}{\tau}\mathcal{Y}\right)}\begin{pmatrix}
		m_0\\
		\tilde{m}_0
	\end{pmatrix}
	-
	\begin{pmatrix}
		m_0\\
		\tilde{m}_0+\zeta m_0
	\end{pmatrix} \right\|_{H^1_\omega(\mathbb{R}^d)\times L^2_\omega(\mathbb{R}^d)} \rightarrow 0 \quad\text{as } \tau \rightarrow 0.
\end{equation}

\end{prop}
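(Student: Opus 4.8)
The plan is to treat $\tfrac{\zeta}{\tau}\mathcal{Y}$ as a \emph{bounded} perturbation of the group generator $\mathcal{X}$, to expand $e^{\tau A_\tau}$ by Duhamel's formula twice, and to show that the zeroth- and first-order terms converge to $M_0$ and to $\zeta\mathcal{Y}M_0=(0,\zeta m_0)$ respectively, while the quadratic remainder vanishes. Write $M_0=(m_0,\tilde m_0)$ and $A_\tau:=\mathcal{X}+\tfrac1\tau\zeta\mathcal{Y}$, and note three elementary facts: (i) $\zeta\mathcal{Y}$ is bounded on $H^1_\omega(\mathbb{R}^d)\times L^2_\omega(\mathbb{R}^d)$ with $\|\zeta\mathcal{Y}\|\le\|\zeta\|_{L^\infty}$, since $\zeta\mathcal{Y}(v_1,v_2)=(0,\zeta v_1)$ and $\|(0,\zeta v_1)\|_{H^1_\omega\times L^2_\omega}=\|\zeta v_1\|_{L^2_\omega}$; (ii) $(\zeta\mathcal{Y})^2=0$, because $\zeta\mathcal{Y}(0,\zeta v_1)=(0,0)$; (iii) $e^{t\mathcal{X}}$ is a $C_0$-group on $H^1_\omega\times L^2_\omega$ (the Remark following \eqref{eqn:linear_velocity_evolution}), so there is $\mathcal{C}_0\ge1$ with $\|e^{t\mathcal{X}}\|\le\mathcal{C}_0$ for $|t|\le1$. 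By the bounded-perturbation theory for $C_0$-groups, $A_\tau$ generates a $C_0$-group and the variation-of-constants identity holds.

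First I would prove a uniform a priori bound. From $e^{sA_\tau}M_0=e^{s\mathcal{X}}M_0+\tfrac1\tau\int_0^s e^{(s-r)\mathcal{X}}\zeta\mathcal{Y}e^{rA_\tau}M_0\,dr$ and Grönwall's inequality on $[0,\tau]$ (the factor $\tfrac1\tau$ is harmless because the interval has length $\tau$, so it contributes $e^{\mathcal{C}_0\|\zeta\|_{L^\infty}}$ and nothing more), one obtains a constant $\mathcal{C}_1$ with
\[
\sup_{0<\tau\le1}\ \sup_{0\le s\le\tau}\ \|e^{sA_\tau}M_0\|_{H^1_\omega\times L^2_\omega}\le\mathcal{C}_1\,\|M_0\|_{H^1_\omega\times L^2_\omega}.
\]
Since the first component of any element of $H^1_\omega\times L^2_\omega$ has $H^1_\omega$-norm at most the product norm, the family $\mathcal{F}:=\{\,m_{r,\tau}:0\le r\le\tau\le1\,\}$, where $m_{r,\tau}$ is the first component of $e^{rA_\tau}M_0$, is bounded in $H^1_\omega(\mathbb{R}^d)$. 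Using the classical compact embedding $H^1_\omega(\mathbb{R}^d)\hookrightarrow\hookrightarrow L^2_\omega(\mathbb{R}^d)$ for the Gaussian weight $\omega=e^{-|x|^2}$ (which follows, for instance, from the unitary change of variables $u\mapsto u\,e^{-|x|^2/2}$ reducing it to compactness of the form domain of the harmonic oscillator $-\Delta+|x|^2$, or from the spectral structure \eqref{eqn:eigenfunction}), $\mathcal{F}$ is relatively compact in $L^2_\omega$; since multiplication by $\zeta\in L^\infty$ is continuous on $L^2_\omega$, the set
\[
\mathcal{K}:=\overline{\bigl\{\,\zeta\mathcal{Y}e^{rA_\tau}M_0=(0,\zeta m_{r,\tau}):0\le r\le\tau\le1\,\bigr\}}
\]
is compact in $H^1_\omega\times L^2_\omega$. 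Combining compactness of $\mathcal{K}$ with strong continuity of $e^{t\mathcal{X}}$ yields a uniform modulus of continuity $\varepsilon(\tau):=\sup\{\|(e^{\rho\mathcal{X}}-I)w\|:w\in\mathcal{K},\ 0\le\rho\le\tau\}\to0$ as $\tau\to0^+$ (a routine compactness argument).

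Next I would expand by Duhamel twice. Substituting the variation-of-constants formula for $e^{sA_\tau}M_0$ into that for $e^{\tau A_\tau}M_0$ gives
\[
e^{\tau A_\tau}M_0=e^{\tau\mathcal{X}}M_0+\underbrace{\frac1\tau\int_0^\tau e^{(\tau-s)\mathcal{X}}\zeta\mathcal{Y}e^{s\mathcal{X}}M_0\,ds}_{=:R_1}+\underbrace{\frac1{\tau^2}\int_0^\tau\!\!\int_0^s e^{(\tau-s)\mathcal{X}}\zeta\mathcal{Y}e^{(s-r)\mathcal{X}}\zeta\mathcal{Y}e^{rA_\tau}M_0\,dr\,ds}_{=:E_2}.
\]
Here $e^{\tau\mathcal{X}}M_0\to M_0$ by strong continuity. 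For $R_1$, writing $R_1-\zeta\mathcal{Y}M_0=\tfrac1\tau\int_0^\tau\bigl[e^{(\tau-s)\mathcal{X}}\zeta\mathcal{Y}e^{s\mathcal{X}}M_0-\zeta\mathcal{Y}M_0\bigr]ds$ and bounding the bracket by $\mathcal{C}_0\|\zeta\|_{L^\infty}\|e^{s\mathcal{X}}M_0-M_0\|+\|(e^{(\tau-s)\mathcal{X}}-I)\zeta\mathcal{Y}M_0\|$, both of which tend to $0$ uniformly for $s\in[0,\tau]$ as $\tau\to0$, gives $R_1\to\zeta\mathcal{Y}M_0=(0,\zeta m_0)$. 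For $E_2$ I would exploit $(\zeta\mathcal{Y})^2=0$ to replace $e^{(s-r)\mathcal{X}}$ by $e^{(s-r)\mathcal{X}}-I$: the integrand equals $e^{(\tau-s)\mathcal{X}}\zeta\mathcal{Y}(e^{(s-r)\mathcal{X}}-I)\bigl(\zeta\mathcal{Y}e^{rA_\tau}M_0\bigr)$, whose norm is at most $\mathcal{C}_0\|\zeta\|_{L^\infty}\varepsilon(\tau)$ since $\zeta\mathcal{Y}e^{rA_\tau}M_0\in\mathcal{K}$ and $s-r\le\tau$; integrating over the simplex of area $\tau^2/2$ yields $\|E_2\|\le\tfrac12\mathcal{C}_0\|\zeta\|_{L^\infty}\varepsilon(\tau)\to0$. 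Hence $e^{\tau A_\tau}M_0\to M_0+\zeta\mathcal{Y}M_0=(m_0,\tilde m_0+\zeta m_0)$, which is \eqref{eqn:limit_order_1}.

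The main obstacle is making the convergence $(e^{\rho\mathcal{X}}-I)\to0$ \emph{uniform} over the $\tau$-dependent intermediate states $\zeta\mathcal{Y}e^{rA_\tau}M_0$ appearing in $E_2$: because $\zeta$ is merely $L^\infty$, these states need not lie in $D(\mathcal{X})$, so one cannot bound $(e^{\rho\mathcal{X}}-I)$ applied to them by $O(\rho)$ directly; the remedy is precisely the smoothing built into $\zeta\mathcal{Y}$ (its range involves only the $L^2_\omega$-component) combined with the compact embedding $H^1_\omega\hookrightarrow\hookrightarrow L^2_\omega$, which converts the uniform $H^1_\omega$-bound on the first components into the compactness of $\mathcal{K}$. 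The two delicate points are therefore verifying that the Grönwall bound is genuinely uniform in $\tau$ despite the $1/\tau$, and that the compact embedding holds for the Gaussian weight.
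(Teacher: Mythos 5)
Your proof is correct and takes a genuinely different route from the paper's. The paper expands the full Dyson series $e^{\tau(\mathcal{X}+\tau^{-1}\zeta\mathcal{Y})}=\sum_n V_n(\tau)$ and claims the closed form $V_n(\tau)=e^{\tau\mathcal{X}}\tfrac{1}{n!}\bigl(\int_0^\tau e^{-s\mathcal{X}}\tfrac{\zeta}{\tau}\mathcal{Y}e^{s\mathcal{X}}\,ds\bigr)^n$, from which the limit follows once one shows that $\tfrac{1}{\tau}\int_0^\tau e^{-s\mathcal{X}}\zeta\mathcal{Y}e^{s\mathcal{X}}\,ds\to\zeta\mathcal{Y}$ in operator norm. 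You instead iterate Duhamel only twice and dispose of the quadratic remainder via the nilpotency trick $\zeta\mathcal{Y}e^{(s-r)\mathcal{X}}\zeta\mathcal{Y}=\zeta\mathcal{Y}(e^{(s-r)\mathcal{X}}-I)\zeta\mathcal{Y}$, controlled uniformly through compactness of $\mathcal{K}$. Your route has a concrete advantage: the paper's closed form replaces the time-ordered simplex integral by $\tfrac{1}{n!}$ times an unordered $n$-fold power, which is an identity only when the conjugates $e^{-s\mathcal{X}}\zeta\mathcal{Y}e^{s\mathcal{X}}$ commute for distinct $s$; here $[\zeta\mathcal{Y},[\zeta\mathcal{Y},\mathcal{X}]]=-2\zeta^2\mathcal{Y}\neq 0$, so they do not, and the paper's operator-valued ``change of variable'' in the inductive step is not literally valid. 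Your two-step Duhamel sidesteps this entirely. Both arguments nevertheless rest on the same analytic input, which you are right to surface explicitly: the compact embedding $H^1_\omega\hookrightarrow\hookrightarrow L^2_\omega$ for $\omega=e^{-|x|^2}$ is what makes $\zeta\mathcal{Y}$ compact and therefore what upgrades strong convergence to the uniformity needed (your $\varepsilon(\tau)$, and, implicitly, the paper's claimed operator-norm limit of the time average, which would fail without compactness since $e^{s\mathcal{X}}$ is not norm-continuous for an unbounded generator).
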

\begin{proof}
By the bounded perturbation theorem (see \cite[Ch.~3, Thm.~1.1]{pazy}), since \(\frac{\zeta}{\tau}\mathcal{Y}\) is clearly bounded for all nonzero \(\tau \in \mathbb{R}\), and \(\mathcal{X}\) generates a \(C_{0}\)-group \(e^{t\mathcal{X}}\), it follows that the operator
\[
\mathcal{X} + \frac{\zeta}{\tau}\mathcal{Y}
\]
also generates a \(C_0\)-group \(e^{t(\mathcal{X} + \frac{\zeta}{\tau}\mathcal{Y})}\) on 
\(H^1_\omega(\mathbb{R}^d) \times L^2_\omega(\mathbb{R}^d)\).

By setting  
\[
V_0(t) = e^{t\mathcal{X}},
\]  
and defining \( V_n(t) \) inductively for all \( n \ge 0 \) as  
\[
V_{n+1}(t) = e^{t\mathcal{X}} \int_0^t e^{-t_n\mathcal{X}} \frac{\zeta}{\tau} \mathcal{Y} V_n(t_n)\, dt_n,
\]  
we obtain the following explicit formula for $n\ge 1$:  
\[
V_n(t) = e^{t\mathcal{X}} \int_{[0,\infty)^{n+1}} \delta\left(t = \sum_{i=0}^n s_i \right) \left( \prod_{i=0}^{n-1} e^{-\sum_{j=0}^i s_j \mathcal{X}} \frac{\zeta}{\tau} \mathcal{Y} e^{\sum_{j=0}^i s_j \mathcal{X}} \right) ds_0 \dots ds_n.
\]

It follows from the strong continuity of the group generated by \(\mathcal{X}\) and the boundedness of \(\frac{\zeta}{\tau}\mathcal{Y}\) (see \cite[p.~77]{pazy}) that  
\[
e^{t\left(\mathcal{X}+\frac{\zeta}{\tau}\mathcal{Y}\right)} = \sum_{n=0}^\infty V_n(t)
\]
is a convergent series in the uniform operator topology.

We then prove the following formula for \(V_n\) by induction:
\begin{equation}\label{eqn:dyson_induction}
	V_{n}(t)=e^{t\mathcal{X}}\frac{1}{n!}\left(\int_{0}^t  e^{-s\mathcal{X}}\frac{\zeta}{\tau}\mathcal{Y}e^{s\mathcal{X}} ds\right)^n,
\end{equation}
which clearly holds when \(n=0\).

Suppose that \eqref{eqn:dyson_induction} holds for \(n = k\). Then
\[
V_{k+1}(t) = e^{t\mathcal{X}} \int_0^t e^{-t_k \mathcal{X}} \frac{\zeta}{\tau} \mathcal{Y} \left( e^{t_k \mathcal{X}} \frac{1}{k!} \left( \int_0^{t_k} e^{-s \mathcal{X}} \frac{\zeta}{\tau} \mathcal{Y} e^{s \mathcal{X}} ds \right)^k \right) dt_k.
\]

With the change of variable
\[
u = \int_0^{t_k} e^{-s\mathcal{X}} \frac{\zeta}{\tau} \mathcal{Y} e^{s\mathcal{X}} \, ds,
\]
we have
\[
V_{k+1}(t) = e^{t\mathcal{X}} \frac{1}{k!} \int_0^t u^k \, du \,   = e^{t\mathcal{X}} \frac{1}{(k+1)!} \left( \int_0^t e^{-s\mathcal{X}} \frac{\zeta}{\tau} \mathcal{Y} e^{s\mathcal{X}} \, ds \right)^{k+1}.
\]

By induction, this implies \eqref{eqn:dyson_induction}.

For a given \((m_0,\tilde{m}_0)\in H^1_\omega(\mathbb{R}^d)\times L^2_\omega(\mathbb{R}^d)\), we know that
\[
\left\| \int_{0}^\tau  e^{-s\mathcal{X}}\frac{\zeta}{\tau}\mathcal{Y}e^{s\mathcal{X}}  ds \begin{pmatrix}
	m_0\\
	\tilde{m}_0
\end{pmatrix}-\zeta \mathcal{Y} \begin{pmatrix}
	m_0\\
	\tilde{m}_0
\end{pmatrix}\right\|_{H^1_\omega\times L^2_\omega}\rightarrow 0 \quad\text{as } \tau\rightarrow 0.
\]

We can choose a sufficiently small \( T_0 > 0 \) such that there exists a constant \(\mathcal{C}\) satisfying
\[
\mathcal{C} \geq \sup_{\tau < T_0} \left\{ \left\| e^{\tau \mathcal{X}} \right\|_{H^1_\omega \times L^2_\omega \to H^1_\omega \times L^2_\omega}, \quad \left\| \int_0^\tau e^{-s \mathcal{X}} \frac{\zeta}{\tau} \mathcal{Y} e^{s \mathcal{X}} \, ds \right\|_{H^1_\omega \times L^2_\omega \to H^1_\omega \times L^2_\omega}, \quad \left\| \zeta \mathcal{Y} \right\|_{H^1_\omega \times L^2_\omega \to H^1_\omega \times L^2_\omega} \right\}.
\]

Then by \eqref{eqn:dyson_induction},
\begin{equation}\label{eqn:dyson_limit_1}
	\begin{aligned}
		&\left\| \sum_{n=0}^\infty V_n(\tau)\begin{pmatrix}
			m_0\\
			\tilde{m}_0
		\end{pmatrix} - e^{\tau\mathcal{X}}\sum_{n=0}^\infty \frac{(\zeta \mathcal{Y})^n}{n!}\begin{pmatrix}
			m_0\\
			\tilde{m}_0
		\end{pmatrix} \right\|_{H^1_\omega \times L^2_\omega} \\
		&\quad \leq \mathcal{C} \sum_{n=1}^\infty \frac{1}{n!} \left\| \left( \int_0^\tau e^{-s \mathcal{X}} \frac{\zeta}{\tau} \mathcal{Y} e^{s \mathcal{X}} ds \right)^n \begin{pmatrix}
			m_0\\
			\tilde{m}_0
		\end{pmatrix} - (\zeta \mathcal{Y})^n \begin{pmatrix}
			m_0\\
			\tilde{m}_0
		\end{pmatrix}\right\|_{H^1_\omega \times L^2_\omega} \\
		&\quad \leq \mathcal{C} \sum_{n=1}^\infty \frac{n \mathcal{C}^{n-1}}{n!} \left\| \int_0^\tau e^{-s \mathcal{X}} \frac{\zeta}{\tau} \mathcal{Y} e^{s \mathcal{X}} ds \begin{pmatrix}
			m_0\\
			\tilde{m}_0
		\end{pmatrix} - \zeta \mathcal{Y} \begin{pmatrix}
			m_0\\
			\tilde{m}_0
		\end{pmatrix} \right\|_{H^1_\omega \times L^2_\omega} \\
		&\quad \to 0 \quad \text{as } \tau \to 0.
	\end{aligned}
\end{equation}

Note that
\begin{equation}\label{eqn:dyson_formula}
	e^{t(\mathcal{X}+\frac{\zeta}{\tau}\mathcal{Y})} = \sum_{n=0}^\infty V_n(t), \quad \sum_{n=0}^\infty \frac{(\zeta \mathcal{Y})^n}{n!} = I + \zeta \mathcal{Y} = \begin{pmatrix}
		I & 0 \\
		\zeta & I
	\end{pmatrix},
\end{equation}
by $\mathcal{Y}^2 = 0$.

Then by \eqref{eqn:dyson_limit_1} and \eqref{eqn:dyson_formula}, we have
\begin{align*}
	&\left\| e^{\tau(\mathcal{X}+\frac{\zeta}{\tau}\mathcal{Y})}\begin{pmatrix}
		m_0\\
		\tilde{m}_0
	\end{pmatrix} - \begin{pmatrix}
		m_0\\
		\tilde{m}_0+\zeta m_0
	\end{pmatrix} \right\|_{H^1_\omega \times L^2_\omega}
	\le  \left\| e^{\tau(\mathcal{X}+\frac{\zeta}{\tau}\mathcal{Y})}\begin{pmatrix}
		m_0\\
		\tilde{m}_0
	\end{pmatrix} - e^{\tau\mathcal{X}}\begin{pmatrix}
		I & 0 \\
		\zeta & I
	\end{pmatrix} \begin{pmatrix}
		m_0 \\
		\tilde{m}_0
	\end{pmatrix} \right\|_{H^1_\omega \times L^2_\omega}\\
	&\quad + \left\| e^{\tau\mathcal{X}}\begin{pmatrix}
		m_0\\
		\tilde{m}_0+\zeta m_0
	\end{pmatrix} -\begin{pmatrix}
		m_0\\
		\tilde{m}_0+\zeta m_0
	\end{pmatrix}  \right\|_{H^1_\omega \times L^2_\omega} \to\, 0 \quad \text{as } \tau \to 0.
\end{align*}
\end{proof}
\begin{prop}
Given any \((m_0, \tilde{m}_0) \in H^1_\omega(\mathbb{R}^d) \times L^2_\omega(\mathbb{R}^d)\) and \(\varphi \in W^{1,\infty}(\mathbb{R}^d)\), we have
\begin{equation}\label{eqn:limit_order_2}
	\left\| e^{-\tau^{-1/2}\varphi \mathcal{Y}} \, e^{\tau \mathcal{X}} \, e^{\tau^{-1/2}\varphi \mathcal{Y}} \begin{pmatrix}
		m_0 \\
		\tilde{m}_0
	\end{pmatrix} - \begin{pmatrix}
		m_0 \\
		\tilde{m}_0 - \varphi^2 m_0
	\end{pmatrix} \right\|_{H^1_\omega(\mathbb{R}^d) \times L^2_\omega(\mathbb{R}^d)} \rightarrow 0 \quad \text{as } \tau \rightarrow 0.
\end{equation}

\end{prop}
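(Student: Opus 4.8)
The plan is to strip away the singular $\tau^{-1/2}$ factors by a conjugation trick that turns the triple product on the left into a genuine $C_0$-group whose generator is only a bounded perturbation of $\tau\mathcal{X}$, and then to let $\tau\to0$ via a Duhamel--Grönwall argument.

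First I would observe that, since $\mathcal{Y}^2=0$, we have $e^{\pm\tau^{-1/2}\varphi\mathcal{Y}}=I\pm\tau^{-1/2}\varphi\mathcal{Y}$, a pair of mutually inverse bounded operators on $H^1_\omega(\mathbb{R}^d)\times L^2_\omega(\mathbb{R}^d)$; here the hypothesis $\varphi\in W^{1,\infty}(\mathbb{R}^d)$ is what guarantees that multiplication by $\varphi$ and by $\varphi^2$ is bounded both on $H^1_\omega$ and on $L^2_\omega$. A direct block-matrix computation on $D(\mathcal{X})$ gives
\[
e^{-\tau^{-1/2}\varphi\mathcal{Y}}\,(\tau\mathcal{X})\,e^{\tau^{-1/2}\varphi\mathcal{Y}}
= A_\tau:=\tau\mathcal{X}+\tau^{1/2}\mathcal{W}-\varphi^2\mathcal{Y},
\qquad
\mathcal{W}:=\begin{pmatrix}\varphi & 0\\ 0 & -\varphi\end{pmatrix},
\]
where the $\tau^{-1/2}$ and $\tau^{-1}$ produced by the conjugation recombine with the outer factor $\tau$ into the bounded term $\tau^{1/2}\mathcal{W}$ and the $\tau$-independent term $-\varphi^2\mathcal{Y}$. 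Since $\mathcal{X}$ generates a $C_0$-group on $H^1_\omega\times L^2_\omega$ and $\tau^{1/2}\mathcal{W}-\varphi^2\mathcal{Y}$ is bounded, the bounded perturbation theorem \cite[Ch.~3, Thm.~1.1]{pazy} makes $A_\tau$ a generator, and the similarity invariance of generation yields $e^{-\tau^{-1/2}\varphi\mathcal{Y}}e^{\tau\mathcal{X}}e^{\tau^{-1/2}\varphi\mathcal{Y}}=e^{A_\tau}$. Since $I-\varphi^2\mathcal{Y}$ sends $(m_0,\tilde m_0)$ to $(m_0,\tilde m_0-\varphi^2 m_0)$, it then suffices to prove $e^{A_\tau}M_0\to(I-\varphi^2\mathcal{Y})M_0$ in $H^1_\omega\times L^2_\omega$ as $\tau\to0$, with $M_0:=(m_0,\tilde m_0)$.

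For the limit I would first record uniform bounds: from $\|e^{r\mathcal{X}}\|\le Me^{\omega_0|r|}$ (automatic for a $C_0$-group) and $\|\tau^{1/2}\mathcal{W}-\varphi^2\mathcal{Y}\|\le\|\mathcal{W}\|+\|\varphi^2\mathcal{Y}\|$ for $\tau\le1$, the perturbation estimate provides constants $M_1,M_2$ with $\|e^{\rho\mathcal{X}}\|\le M_1$ and $\|e^{sA_\tau}\|\le M_2$ for all $0\le\rho,s\le1$, $0<\tau\le1$. Next, set $w(s):=(I-s\varphi^2\mathcal{Y})M_0$; by nilpotency $\varphi^2\mathcal{Y}\,w(s)=\varphi^2\mathcal{Y}M_0$, so $w(s)=M_0-\int_0^s\varphi^2\mathcal{Y}M_0\,dr$, and $w(s)-M_0\in\{0\}\times L^2_\omega$ is annihilated by $\varphi^2\mathcal{Y}$. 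Writing Duhamel's formula $e^{sA_\tau}M_0=e^{s\tau\mathcal{X}}M_0+\int_0^s e^{(s-r)\tau\mathcal{X}}(\tau^{1/2}\mathcal{W}-\varphi^2\mathcal{Y})e^{rA_\tau}M_0\,dr$, subtracting $w(s)$, and using the identity $\varphi^2\mathcal{Y}(e^{rA_\tau}M_0-M_0)=\varphi^2\mathcal{Y}(e^{rA_\tau}M_0-w(r))$, I get for $R_\tau(s):=\|e^{sA_\tau}M_0-w(s)\|$ an estimate
\[
R_\tau(s)\le\eta(\tau)+M_1\|\varphi^2\mathcal{Y}\|\int_0^s R_\tau(r)\,dr,
\]
where $\eta(\tau)$ collects the three error terms $\sup_{\rho\le\tau}\|e^{\rho\mathcal{X}}M_0-M_0\|$, $\sup_{\rho\le\tau}\|(e^{\rho\mathcal{X}}-I)\varphi^2\mathcal{Y}M_0\|$ and $M_1M_2\|\mathcal{W}\|\,\|M_0\|\,\tau^{1/2}$, all of which tend to $0$ as $\tau\to0$ (the first two by strong continuity of $e^{t\mathcal{X}}$ at $0$). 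Grönwall's inequality then gives $R_\tau(1)\le\eta(\tau)e^{M_1\|\varphi^2\mathcal{Y}\|}\to0$, which is exactly the assertion.

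I expect the main obstacle to be precisely the point that makes the Grönwall step work: $e^{sA_\tau}M_0$ does \emph{not} converge to $M_0$ (its second component carries an $O(1)$ drift generated by $-\varphi^2\mathcal{Y}$), so closing Grönwall on $\|e^{sA_\tau}M_0-M_0\|$ would not yield a vanishing quantity; the identity $\varphi^2\mathcal{Y}(e^{rA_\tau}M_0-M_0)=\varphi^2\mathcal{Y}(e^{rA_\tau}M_0-w(r))$, forced by the nilpotency of $\varphi^2\mathcal{Y}$ together with the fact that $w(r)-M_0$ has zero first component, is what ensures the Grönwall kernel sees only the genuinely small $R_\tau(r)$. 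A secondary subtlety is that the singular prefactors are harmless only after the conjugation; tackling the triple product directly instead leaves a commutator $\tau^{-1/2}\bigl[\cos\bigl(\tau\sqrt{-(\Delta-2x\cdot\nabla)}\bigr),\varphi\bigr]m_0$ which then needs to be shown to vanish (it does, being $O(\tau^{3/2})$ on $H^1_\omega$), or one may instead run the Dyson (Feynman) expansion exactly as in Proposition~\ref{prop:lemma_1}.
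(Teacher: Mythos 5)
Your proposal is correct, and it shares the same first reduction as the paper but then diverges. Both you and the paper begin by conjugating: you observe that, because $\mathcal{Y}^2=0$, the factors $e^{\pm\tau^{-1/2}\varphi\mathcal{Y}}=I\pm\tau^{-1/2}\varphi\mathcal{Y}$ are bounded isomorphisms, and that the conjugated generator is
$A_\tau = \tau\mathcal{X}+\tau^{1/2}\varphi[\mathcal{X},\mathcal{Y}]-\varphi^2\mathcal{Y}$. The paper obtains exactly this via the Hadamard/BCH formula (which terminates after the second commutator, again because $\mathcal{Y}^2=0$) and then establishes the limit by writing out the full Dyson series $\sum_n U_n(\tau)$, splitting each $U_n$ into a ``diagonal'' part $D_n$ built only from $-\varphi^2\mathcal{Y}$ and a remainder $R_n$ carrying at least one $\tau^{1/2}$-weighted factor, summing $D_n$ to $e^{\tau\mathcal{X}}(I-\varphi^2\mathcal{Y})$, and estimating $\sum_n R_n$ term by term to get a factor $e^{\mathcal{C}}(e^{\mathcal{C}\tau^{1/2}}-1)\to 0$. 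You bypass the series bookkeeping entirely: after the same reduction you write Duhamel's formula for $e^{sA_\tau}$ against the unperturbed group $e^{s\tau\mathcal{X}}$, compare with the affine curve $w(s)=(I-s\varphi^2\mathcal{Y})M_0$, and exploit the nilpotency identity $\varphi^2\mathcal{Y}\,(M_0-w(r))=0$ to ensure that the Grönwall kernel only ever sees the genuinely small quantity $R_\tau(r)=\|e^{rA_\tau}M_0-w(r)\|$ rather than the $O(1)$ drift. This is the key structural point, and you identify it correctly. Your uniform bound on $\|e^{sA_\tau}\|$ is also obtained the right way---through the bounded-perturbation estimate rather than through $\|B^{\pm 1}\|$, which blows up like $\tau^{-1/2}$. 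Both proofs give the stated pointwise (fixed-initial-data) convergence; the paper's series argument actually yields operator-norm convergence of the remainder $\sum_n R_n$, at the cost of substantially more combinatorics, while your Grönwall route is shorter and leans on standard semigroup machinery.
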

\begin{proof}
By the similarity invariance of the matrix exponential (see, e.g., \cite{HornJohnson85} and  \cite{Higham08}), we have
\[
e^{-\tau^{-1/2}\varphi \mathcal{Y}} e^{t \mathcal{X}} e^{\tau^{-1/2}\varphi \mathcal{Y}} = \exp\left( e^{-\tau^{-1/2}\varphi \mathcal{Y}} \, t \mathcal{X} \, e^{\tau^{-1/2}\varphi \mathcal{Y}} \right).
\]

Using the Hadamard formula (see, e.g., \cite{Hall15} and \cite{Simon05}), we have
\[
e^{-\tau^{-1/2} \varphi \mathcal{Y}} \mathcal{X} e^{\tau^{-1/2} \varphi \mathcal{Y}} = \mathcal{X} + \frac{\varphi}{\tau^{1/2}} [\mathcal{X}, \mathcal{Y}] + \frac{\varphi^2}{2\tau} [[\mathcal{X}, \mathcal{Y}], \mathcal{Y}] + \frac{\varphi^3}{6\tau^{3/2}} [[[ \mathcal{X}, \mathcal{Y}], \mathcal{Y}], \mathcal{Y}] + \cdots.
\]

Here, since \(\mathcal{Y}^2 = 0\), the triple commutator \([[[\mathcal{X}, \mathcal{Y}], \mathcal{Y}], \mathcal{Y}]\) and all higher-order terms vanish. Thus, we obtain
\[
e^{-\tau^{-1/2}\varphi \mathcal{Y}} \mathcal{X} e^{\tau^{-1/2}\varphi \mathcal{Y}} = \mathcal{X} + \frac{\varphi}{\tau^{1/2}}[\mathcal{X}, \mathcal{Y}] + \frac{\varphi^2}{2\tau}[[\mathcal{X}, \mathcal{Y}], \mathcal{Y}],
\]
where it can be computed that
\[
[\mathcal{X}, \mathcal{Y}] = \begin{pmatrix}
	I & 0\\
	0 & -I
\end{pmatrix}, \quad 
[[\mathcal{X}, \mathcal{Y}], \mathcal{Y}] = \begin{pmatrix}
	0 & 0\\
	-2I & 0
\end{pmatrix}.
\]

Since \(\varphi, \nabla \varphi \in L^\infty(\mathbb{R}^d)\), the operators \(\frac{\varphi}{\tau^{1/2}} [\mathcal{X}, \mathcal{Y}]\) and \(\frac{\varphi^2}{2\tau} [[\mathcal{X}, \mathcal{Y}], \mathcal{Y}]\) are bounded for all nonzero $\tau\in\mathbb{R}$.

Proceeding as in Proposition \ref{prop:lemma_1}, we have
\[
\exp\left(e^{-\tau^{-1/2}\varphi \mathcal{Y}}\, t\mathcal{X}\, e^{\tau^{-1/2}\varphi \mathcal{Y}}\right)
= U(t) = \sum_{n=0}^\infty U_n(t),
\]
where $U_0(t)= e^{t\mathcal{X}}$, and for $n\ge1$,
\[
U_n(t)
= e^{t\mathcal{X}} \int_{[0,\infty)^{n+1}} \delta\left(t = \sum_{i=0}^n s_i\right)
\left( \prod_{i=0}^{n-1} 
e^{-\sum\limits_{j=0}^i s_j \mathcal{X}} 
\left( \frac{\varphi}{\tau^{1/2}}[\mathcal{X}, \mathcal{Y}]
+ \frac{\varphi^2}{2\tau} [[\mathcal{X}, \mathcal{Y}], \mathcal{Y}] \right)
e^{\sum\limits_{j=0}^i s_j \mathcal{X}} 
\right) ds_0 \dots ds_n.
\]

Moreover, for all \( n \), we decompose \( U_n \) as follows:
\[
U_n(t) = D_n(t) + R_n(t),
\]
where $D_0= e^{t\mathcal{X}}, R_0(t)=0$, and for $n\ge 1$,
\[
D_n(t)
= e^{t\mathcal{X}} \int_{[0,\infty)^{n+1}} \delta\left(t = \sum_{i=0}^n s_i\right)
\left( \prod_{i=0}^{n-1} 
e^{-\sum_{j=0}^i s_j \mathcal{X}} 
\frac{\varphi^2}{2\tau} [[\mathcal{X}, \mathcal{Y}], \mathcal{Y}]
e^{\sum_{j=0}^i s_j \mathcal{X}} 
\right) ds_0 \dots ds_n,
\]
while \( R_n(t) \) collects all the remaining terms involving \(\tfrac{\varphi}{\tau^{1/2}}[\mathcal{X}, \mathcal{Y}]\) in the \(n\)-th product.

Similarly to \eqref{eqn:dyson_induction}, we have for all \( n \),
\[
D_n(t)
= e^{t\mathcal{X}} \frac{1}{n!} \left( \int_0^t e^{-s\mathcal{X}} \frac{\varphi^2}{2\tau} [[\mathcal{X}, \mathcal{Y}], \mathcal{Y}] e^{s\mathcal{X}} \, ds \right)^n,
\]
and as in Proposition \ref{prop:lemma_1}, we obtain that for any given \((m_0, \tilde{m}_0) \in H^1_\omega(\mathbb{R}^d) \times L^2_\omega(\mathbb{R}^d)\), as \( \tau \to 0 \),
\[
\sum_{n=0}^\infty D_n(\tau)\begin{pmatrix}
	m_0\\\tilde{m}_0
\end{pmatrix} \rightarrow \sum_{n=0}^\infty \frac{ \left( \frac{\varphi^2}{2} [[\mathcal{X}, \mathcal{Y}], \mathcal{Y}] \right)^n }{n!}\begin{pmatrix}
	m_0\\\tilde{m}_0
\end{pmatrix} .
\]

Since
\[
[[\mathcal{X}, \mathcal{Y}], \mathcal{Y}]^2 = 
\begin{pmatrix}
	0 & 0 \\
	-2I & 0
\end{pmatrix}^2 = 0,
\]
we conclude that
\[
\sum_{n=0}^\infty D_n(\tau)\begin{pmatrix}
	m_0\\\tilde{m}_0
\end{pmatrix}  \rightarrow I + \frac{\varphi^2}{2} [[\mathcal{X}, \mathcal{Y}], \mathcal{Y}]\begin{pmatrix}
	m_0\\\tilde{m}_0
\end{pmatrix} 
= \begin{pmatrix}
	I & 0 \\
	-\varphi^2 & I
\end{pmatrix}\begin{pmatrix}
	m_0\\\tilde{m}_0
\end{pmatrix} .
\]

For \( R_n(t) \), observe that for some sufficiently small \( 1>T_0 > 0 \), if \( \sum_{j=0}^i s_j < T_0 \), there exists a constant \( \mathcal{C} > 0 \) such that
\[
\left\| e^{-\sum_{j=0}^i s_j \mathcal{X}}\, \varphi [\mathcal{X}, \mathcal{Y}]\, e^{\sum_{j=0}^i s_j \mathcal{X}} \right\|_{H^1_\omega \times L^2_\omega \rightarrow H^1_\omega \times L^2_\omega} \le \mathcal{C},
\]
\[
\left\| e^{-\sum_{j=0}^i s_j \mathcal{X}}\, \frac{\varphi^2}{2} [[\mathcal{X}, \mathcal{Y}], \mathcal{Y}]\, e^{\sum_{j=0}^i s_j \mathcal{X}} \right\|_{H^1_\omega \times L^2_\omega \rightarrow H^1_\omega \times L^2_\omega} \le \mathcal{C},
\]
and for all \( t < T_0 \),
\[
\left\| e^{t \mathcal{X}} \right\|_{H^1_\omega \times L^2_\omega \rightarrow H^1_\omega \times L^2_\omega} \le \mathcal{C}.
\]

Thus, for \( \tau < T_0 \), for $n\ge 1$,
\[
\left\| R_n(\tau) \right\|_{H^1_\omega \times L^2_\omega \rightarrow H^1_\omega \times L^2_\omega}
\le \mathcal{C} \left( \sum_{i=1}^n \binom{n}{i} \frac{\tau^n}{n!} \frac{\mathcal{C}^n}{\tau^{n - \frac{i}{2}}} \right)
= \mathcal{C} \left( \sum_{i=1}^n \frac{\mathcal{C}^n}{i!(n-i)!} \tau^{\frac{i}{2}} \right).
\]

Therefore,
\begin{align*}
	\left\| \sum_{n=0}^\infty R_n(\tau) \right\|_{H^1_\omega \times L^2_\omega \rightarrow H^1_\omega \times L^2_\omega}
	&\le \mathcal{C} \sum_{n=1}^\infty \left( \sum_{i=1}^n \frac{\mathcal{C}^n}{i!(n-i)!} \tau^{\frac{i}{2}} \right) \\
	&= \mathcal{C} \sum_{i=1}^\infty \frac{\mathcal{C}^i}{i!} \left( \sum_{n=i}^\infty \frac{\mathcal{C}^{n-i}}{(n-i)!} \right) \tau^{\frac{i}{2}} \\
	&<  \mathcal{C} e^{\mathcal{C}} \left( e^{\mathcal{C}\tau^{1/2}} -1\right),
\end{align*}
which converges to 0 as \( \tau \to 0 \).

Hence, as \( \tau \to 0 \), we have
\[
e^{-\tau^{-1/2}\varphi \mathcal{Y}} e^{\tau \mathcal{X}} e^{\tau^{-1/2}\varphi \mathcal{Y}} \begin{pmatrix}
	m_0 \\
	\tilde{m}_0
\end{pmatrix}
\rightarrow
\begin{pmatrix}
	I & 0 \\
	-\varphi^2 & I
\end{pmatrix}
\begin{pmatrix}
	m_0 \\
	\tilde{m}_0
\end{pmatrix}
=
\begin{pmatrix}
	m_0 \\
	\tilde{m}_0 - \varphi^2 m_0
\end{pmatrix}.
\]
\end{proof}

\subsubsection{A Proposition on Velocity Controllability}

To simplify the notation, we first define the \emph{concatenation} \(\mathfrak{p}' \diamond \mathfrak{p}\) of two control laws \(\mathfrak{p}: [0,T_1] \to \mathbb{R}^{2d+1}\) and \(\mathfrak{p}': [0,T_2] \to \mathbb{R}^{2d+1}\), as a function on the interval \([0, T_1 + T_2]\) given by
\begin{equation}\label{concatenation}
(\mathfrak{p}' \diamond \mathfrak{p})(t) =
\begin{cases}
	\mathfrak{p}(t), & t \in [0, T_1], \\
	\mathfrak{p}'(t - T_1), & t \in (T_1, T_1 + T_2].
\end{cases}
\end{equation}

\begin{prop}\label{prop:control_velocity}
For any initial state \((m_0, \tilde{m}_0) \in H^1_\omega(\mathbb{R}^d) \times \bigl(L^2_\omega(\mathbb{R}^d) \cap L^p_{\mathrm{loc}}(\mathbb{R}^d)\bigr)\) satisfying Assumption~\ref{asp:finitecombination}, and for any target velocity \(\tilde{m}_T := v \in L^2_\omega(\mathbb{R}^d) \cap L^p_{\mathrm{loc}}(\mathbb{R}^d)\) with some \(p > 2\), as well as any \(\varepsilon, T > 0\), there exist \(\tau \in [0, T)\) and a piecewise constant control law \(\mathfrak{p} : [0, \tau] \to \mathbb{R}^{2d+1}\) such that the solution \(m\) to \eqref{eqn:ControlPDE_linear}, under the control defined by \eqref{eqn:formula_theta} and \eqref{eqn:set_theta} and starting from \((m_0, \tilde{m}_0)\), satisfies
\[
\left\| \bigl(m(\cdot, \tau), \partial_t m(\cdot, \tau)\bigr) - (m_0, v) \right\|_{H^1_\omega(\mathbb{R}^d) \times L^2_\omega(\mathbb{R}^d)} < \varepsilon.
\]

\end{prop}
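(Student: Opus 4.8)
The plan is to realize, using admissible controls of the form \eqref{eqn:formula_theta}--\eqref{eqn:set_theta}, every ``velocity kick'' $(m_0,\tilde m_0)\mapsto (m_0,\tilde m_0+\psi m_0)$ with $\psi$ in the saturating space $\mathcal V_\infty^\Phi$, and then to conclude by approximating the target velocity. Since equation \eqref{eqn:ControlPDE_linear} is linear, its flow is a bounded linear operator, so it makes sense to speak of approximating operators. Recall that $\mathcal Y^2=0$, hence $e^{\zeta\mathcal Y}=I+\zeta\mathcal Y$ acts as $(a,b)\mapsto(a,b+\zeta a)$ and $e^{\zeta_1\mathcal Y}e^{\zeta_2\mathcal Y}=e^{(\zeta_1+\zeta_2)\mathcal Y}$; in particular, if $\psi=\psi_0-\sum_{i=1}^N\psi_i^2$ then $e^{\psi\mathcal Y}=e^{\psi_0\mathcal Y}\prod_{i=1}^N e^{-\psi_i^2\mathcal Y}$. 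Thus it suffices to show: for every $\zeta\in\mathcal V_\infty^\Phi$, every state, every prescribed accuracy, and every $\eta>0$, there is a piecewise-constant admissible control of duration $<\eta$ whose flow approximates $e^{\zeta\mathcal Y}$ to that accuracy on that state.

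I would prove this by induction on the level $n$ with $\zeta\in\mathcal V_n^\Phi$. When $n=0$, $\mathcal V_0^\Phi=\operatorname{span}\{\theta_0,\dots,\theta_{2d}\}$, so the constant control $\theta\equiv-\zeta/\tau$ on $[0,\tau]$ is admissible, its flow is $e^{\tau(\mathcal X+\tfrac\zeta\tau\mathcal Y)}$, and Proposition~\ref{prop:lemma_1} gives $e^{\tau(\mathcal X+\tfrac\zeta\tau\mathcal Y)}(a,b)\to e^{\zeta\mathcal Y}(a,b)$ as $\tau\to0$. For the inductive step, take $\psi\in\mathcal V_{n+1}^\Phi$, write $\psi=\psi_0-\sum_{i=1}^N\psi_i^2$ with $\psi_0,\dots,\psi_N\in\mathcal V_n^\Phi$, and use $e^{\psi\mathcal Y}=e^{\psi_0\mathcal Y}\prod_i e^{-\psi_i^2\mathcal Y}$. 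The factor $e^{\psi_0\mathcal Y}$ is provided by the inductive hypothesis. For each factor $e^{-\psi_i^2\mathcal Y}$ I would invoke the second elementary estimate \eqref{eqn:limit_order_2} --- applicable since $\psi_i\in\mathcal V_n^\Phi\subset W^{1,\infty}(\mathbb R^d)$ by Proposition~\ref{prop:saturating_space_Phi} --- which says $e^{-\tau^{-1/2}\psi_i\mathcal Y}e^{\tau\mathcal X}e^{\tau^{-1/2}\psi_i\mathcal Y}(a,b)\to e^{-\psi_i^2\mathcal Y}(a,b)$ as $\tau\to0$; here $e^{\tau\mathcal X}$ is the zero control, while each pulse $e^{\pm\tau^{-1/2}\psi_i\mathcal Y}$, for a fixed small $\tau$, is an operator $e^{\zeta\mathcal Y}$ with $\zeta=\pm\tau^{-1/2}\psi_i\in\mathcal V_n^\Phi$ (a vector space), hence approximable by the inductive hypothesis. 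One first fixes $\tau$ small enough that \eqref{eqn:limit_order_2} gives the desired accuracy, and only then approximates the two pulses as accurately as is subsequently required; composing these finitely many admissible pieces and propagating the accumulated error through the Lipschitz bound \eqref{eqn:Lipschitz_continuity} of Proposition~\ref{Mild} yields the claim at level $n+1$.

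Each elementary operation can be made arbitrarily short in time, and for a fixed $\psi\in\mathcal V_n^\Phi$ only finitely many are used, so the total control time is $<T$ and the concatenated control is piecewise constant of the required form \eqref{eqn:formula_theta}--\eqref{eqn:set_theta}. Along the trajectory the displacement component equals $m_0$ in the limit (the kicks only change the velocity), so, since $m_0$ is by Assumption~\ref{asp:finitecombination} a nonzero finite combination of the eigenfunctions $\psi_{\mathbf n}=H_{n_1}(x_1)\cdots H_{n_d}(x_d)$ and hence a nonzero polynomial, all the products $\psi m_0$ are well defined and $m_0\neq0$ a.e.; the discrepancy between the limiting and the actual intermediate states is absorbed by \eqref{eqn:Lipschitz_continuity}. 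Finally, given the target velocity $v$, set $g:=v-\tilde m_0\in L^2_\omega(\mathbb R^d)$; the set $\{\psi m_0:\psi\in\mathcal V_\infty^\Phi\}$ is dense in $L^2_\omega(\mathbb R^d)$, because if $f\in L^2_\omega$ is $L^2_\omega$-orthogonal to every $\psi m_0$, then $f m_0\in L^p_\omega(\mathbb R^d)$ for all $1\le p<2$ (H\"older, using that $m_0$ is a polynomial and $\omega=e^{-|x|^2}$), so $f m_0$ represents a bounded functional on $L^{p'}_\omega$ that vanishes on the dense subspace $\mathcal V_\infty^\Phi$ (Proposition~\ref{prop:saturating_space_Phi}), forcing $f m_0=0$ and thus $f=0$. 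Picking $\psi\in\mathcal V_\infty^\Phi$ with $\|\psi m_0-g\|_{L^2_\omega}<\varepsilon/2$ and then realizing the kick $(m_0,\tilde m_0)\mapsto(m_0,\tilde m_0+\psi m_0)$ to within $\varepsilon/2$ in a total time $\tau<T$ as above gives $\|(m(\cdot,\tau),\partial_t m(\cdot,\tau))-(m_0,v)\|_{H^1_\omega\times L^2_\omega}<\varepsilon$.

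The main obstacle I anticipate is the recursive realization of the high-frequency pulses $e^{\pm\tau^{-1/2}\psi_i\mathcal Y}$ with $\psi_i$ in the higher levels $\mathcal V_n^\Phi$, which are \emph{not} contained in the span of the control basis $\{\theta_j\}$: one must order the nested approximations so that the accuracy demanded of a pulse is fixed only after the outer parameter $\tau$ in \eqref{eqn:limit_order_2} has been chosen, and then carefully propagate all errors through the compositions via \eqref{eqn:Lipschitz_continuity} while keeping the cumulative time below $T$ --- this is where the saturation scheme does its real work. A lesser technical point is the $L^2_\omega$-density of $\{\psi m_0\}$, which requires the weighted H\"older bookkeeping indicated above; the local-integrability hypothesis on $v$ plays no role in this argument and is carried only for compatibility with the later combination of displacement and velocity control.
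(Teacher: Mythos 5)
Your proof is correct, and the inductive realization of the velocity kicks $e^{\psi\mathcal Y}$ for $\psi\in\mathcal V_n^\Phi$---base case $\mathcal V_0^\Phi=\operatorname{span}\{\theta_j\}$ via Proposition~\ref{prop:lemma_1}, inductive step realizing each $e^{-\psi_i^2\mathcal Y}$ as $e^{-\tau^{-1/2}\psi_i\mathcal Y}e^{\tau\mathcal X}e^{\tau^{-1/2}\psi_i\mathcal Y}$ via \eqref{eqn:limit_order_2}, errors propagated through \eqref{eqn:Lipschitz_continuity}, with $\tau$ fixed before the pulse accuracies---is exactly the paper's Step~2 (property $(\mathfrak I_n)$). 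Where you genuinely diverge is the approximation of $v-\tilde m_0$ by $\psi m_0$. The paper does this in two stages: it first truncates to a bounded ball, removes an $\eta$-neighborhood of the zero set of $m_0$, and forms $\psi_\eta=\frac{v-\tilde m_0}{m_0}\chi_{\mathcal B\setminus N_\eta(m_0)}$, controlling tails and the small-set contribution via $|N_\eta(m_0)|\to0$; it then approximates $\psi_\eta$ in $L^p_\omega$ by elements of $\mathcal V_\infty^\Phi$ and pairs with $m_0\in L^q_\omega$ by H\"older ($\tfrac1p+\tfrac1q=\tfrac12$). The $L^p_{\mathrm{loc}}$, $p>2$, hypothesis on $\tilde m_0,v$ is used precisely to ensure $\psi_\eta\in L^p_\omega$. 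Your duality argument---if $f\in L^2_\omega$ is orthogonal to all $\psi m_0$, then $fm_0\in L^p_\omega$ for $1<p<2$, hence defines a bounded functional on $L^{p'}_\omega$ ($p'<\infty$) vanishing on the dense $\mathcal V_\infty^\Phi$, so $fm_0=0$ and thus $f=0$ because $m_0$ is a nonzero polynomial---establishes density of $\{\psi m_0:\psi\in\mathcal V_\infty^\Phi\}$ directly in $L^2_\omega$, bypassing the cutoff/neighborhood construction entirely and, as you observe, rendering the $L^p_{\mathrm{loc}}$ hypothesis superfluous for this particular proposition. This is a cleaner route to the same conclusion and proves a slightly stronger statement; the only point to flag is that the H\"older exponent must satisfy $p>1$ so that $p'<\infty$ (density of $\mathcal V_\infty^\Phi$ fails in $L^\infty$), which you have since any $p\in(1,2)$ works. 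Also note the paper's displays write $\mathcal X^2=0$ and $e^{\psi\mathcal X}$ where $\mathcal Y^2=0$ and $e^{\psi\mathcal Y}$ are meant; your version is the correct one.
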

\begin{proof}

\textbf{Step 1. Substitute $v$ with $\tilde{m}_0+\psi_\eta m_0$.}

For a given \(\varepsilon > 0\), there exists a closed bounded ball \(\mathcal{B} \subset \mathbb{R}^d\) such that
\[
\left\| \tilde{m}_0 \chi_{\mathcal{B}} \right\|_{L^2_\omega(\mathbb{R}^d)} + \left\| v \chi_{\mathcal{B}} \right\|_{L^2_\omega(\mathbb{R}^d)} < \frac{\varepsilon}{4}.
\]

Define
\[
N(m_0) := \{ z \in \mathcal{B} \mid m_0(z) = 0 \},
\]
and let \(N_\eta(m_0)\) be the \(\eta\)-neighborhood of \(N(m_0)\) inside \(\mathcal{B}\):
\[
N_\eta(m_0) := \{ x \in \mathcal{B} \mid \mathrm{dist}(x, N(m_0)) < \eta \}.
\]

Then set
\begin{equation}
	\psi_\eta := \frac{v - \tilde{m}_0}{m_0} \chi_{\mathcal{B} \setminus N_\eta(m_0)},
\end{equation}
which is well-defined since the continuous, nonzero function \(m_0\) has a positive lower bound on the compact set \(\mathcal{B} \setminus N_\eta(m_0)\).

One has
\begin{align*}
	\left\| (\tilde{m}_0 + \psi_\eta m_0) - v \right\|_{L^2_\omega(\mathbb{R}^d)}
	&= \left\| \tilde{m}_0 - v + (v - \tilde{m}_0) \chi_{\mathcal{B} \setminus N_\eta(m_0)} \right\|_{L^2_\omega(\mathbb{R}^d)} \\
	&\leq \frac{\varepsilon}{4} + \left\| \tilde{m}_0 \right\|_{L^2_\omega(N_\eta(m_0))} + \left\| v \right\|_{L^2_\omega(N_\eta(m_0))}.
\end{align*}

Since \(m_0\) is a finite nonzero linear combination of eigenfunctions, it is analytic, which implies that the zero set \(N(m_0)\) satisfies \(|N(m_0)| = 0\). Moreover, continuity of \(m_0\) implies that 
\[
N(m_0) = m_0^{-1}(\{0\})
\]
is a closed set. Hence,
\[
\lim_{\eta \to 0} |N_\eta(m_0)| = \left| \bigcap_{\eta > 0} N_\eta(m_0) \right| = |\overline{N(m_0)}| = |N(m_0)| = 0.
\]

Thus,
\[
\left\| \tilde{m}_0 \right\|_{L^2_\omega(N_\eta(m_0))} + \left\| v \right\|_{L^2_\omega(N_\eta(m_0))} \to 0 \quad \text{as } \eta \to 0,
\]
and therefore there exists a sufficiently small \(\eta > 0\) such that
\[
\left\| \tilde{m}_0 \right\|_{L^2_\omega(N_\eta(m_0))} + \left\| v \right\|_{L^2_\omega(N_\eta(m_0))} < \frac{\varepsilon}{4}.
\]

Hence, for the chosen \(\eta\), we have
\begin{equation}\label{eqn:error_v_mphi}
	\left\| (\tilde{m}_0 + \psi_\eta m_0) - v \right\|_{L^2_\omega(\mathbb{R}^d)} < \frac{\varepsilon}{2}.
\end{equation}

\textbf{Step 2. Use induction to show the controllability for $\tilde{m}_0+\psi m_0$ with $\psi\in\mathcal{V}_\infty^\Phi$.}

Then, we prove by induction that the following property holds for all \(n \geq 0\), relying on \eqref{eqn:limit_order_1} and \eqref{eqn:limit_order_2}.

\textbf{Property \((\mathfrak{I}_n)\):} For any initial state \((m_0, \tilde{m}_0) \in H^1_\omega(\mathbb{R}^d) \times L^2_\omega(\mathbb{R}^d)\), any \(\psi \in \mathcal{V}_n^\Phi\), and any \(\varepsilon, T > 0\), there exist \(\tau \in [0,T)\) and a piecewise constant control \(\mathfrak{p} : [0,\tau] \to \mathbb{R}^{2d+1}\) such that the solution \(m\) of \eqref{eqn:ControlPDE_linear} under the control law specified by \eqref{eqn:formula_theta} and \eqref{eqn:set_theta}, with initial state \((m_0, \tilde{m}_0)\), satisfies
\[
\left\| \bigl(m(\cdot,\tau), \tilde{m}(\cdot,\tau)\bigr) - \bigl(m_0, \tilde{m}_0 + \psi m_0\bigr) \right\|_{H^1_\omega(\mathbb{R}^d) \times L^2_\omega(\mathbb{R}^d)} < \varepsilon.
\]

Note that, since \(\mathcal{Y}^2 = 0\), the term \(\bigl(m_0, \tilde{m}_0 + \psi m_0\bigr)\) can be interpreted as the result of applying the operator \(e^{\psi \mathcal{Y}}\) to the initial state:
\begin{equation}\label{eqn:control_for_e_phi_X}
	e^{\psi \mathcal{Y}} \begin{pmatrix} m_0 \\ \tilde{m}_0 \end{pmatrix} 
	= \left(I+\psi \mathcal{Y}\right)\begin{pmatrix} m_0 \\ \tilde{m}_0 \end{pmatrix}= \begin{pmatrix} m_0 \\ \tilde{m}_0 + \psi m_0 \end{pmatrix}.
\end{equation}

$(\mathfrak{I}_0):$  Given any $\psi \in \mathcal{V}_0^\Phi$, since by definition, $$\mathcal{V}_0^\Phi=\mathop{\mathrm{span}}\limits_{{0\le j \le 2d}}\{\theta_j\}.$$ 

Thus  there exists $\mathfrak{p} = (\mathfrak{p}_0, \dots, \mathfrak{p}_{2d}) \in \mathbb{R}^{2d+1} \text{ such that }$
\[
\psi(x) = -\sum_{j=0}^{2d} \mathfrak{p}_j \theta_j(x).
\]

Then, for any \(\tau > 0\), we have
\[
\frac{\psi(x)}{\tau} = -\sum_{j=0}^{2d} \frac{\mathfrak{p}_j}{\tau} \theta_j(x),
\]
and therefore 
\[
\mathcal{S}\left(t, \begin{pmatrix} m_0 \\ \tilde{m}_0 \end{pmatrix}, \frac{1}{\tau} \mathfrak{p}\right)
\]
is the solution of the following Cauchy problem:
\[
\frac{\partial}{\partial t} M(x,t) = \left(\mathcal{X} +\frac{\psi}{\tau} \mathcal{Y}\right) M(x,t), \quad M_0 = (m_0, \tilde{m}_0).
\]

Hence\[
\mathcal{S}\left(t, \begin{pmatrix} m_0 \\ \tilde{m}_0 \end{pmatrix}, \frac{1}{\tau} \mathfrak{p}\right) = e^{t \left(\mathcal{X} + \frac{\psi}{\tau} \mathcal{Y}\right)} \begin{pmatrix} m_0 \\ \tilde{m}_0 \end{pmatrix}.
\]

By Proposition \ref{prop:saturating_space_Phi}, we have \(\psi \in \mathcal{V}_0^\Phi \subset L^\infty(\mathbb{R}^n)\). Therefore, by \eqref{eqn:limit_order_1}, there exists \(\tau \in [0, T)\) such that
\[
\left\| \mathcal{S}\left(\tau, \begin{pmatrix}
	m_0 \\
	\tilde{m}_0
\end{pmatrix}, \frac{1}{\tau} \mathfrak{p} \right) - \begin{pmatrix}
	m_0 \\
	\tilde{m}_0 + \psi m_0
\end{pmatrix} \right\|_{H^1_\omega(\mathbb{R}^d) \times L^2_\omega(\mathbb{R}^d)} < \varepsilon,
\]
which establishes \((\mathfrak{I}_0)\).

$(\mathfrak{I}_n) \Rightarrow (\mathfrak{I}_{n+1})$: Assume that $(\mathfrak{I}_n)$ holds. We now prove $(\mathfrak{I}_{n+1})$ by induction.

Given any \(\psi \in \mathcal{V}_{n+1}^\Phi\), by the definition of \(\mathcal{V}_{n+1}^\Phi\), there exist \(\psi_0, \dots, \psi_N \in \mathcal{V}_n^\Phi\) such that
\[
\psi = \psi_0 - \sum_{i=1}^N \psi_i^2.
\]

To prove $(\mathfrak{I}_{n+1})$, which amounts to approximating the simulation of \(e^{\psi \mathcal{Y}}\), and noting that \(\mathcal{Y}^2 = 0\), it suffices to approximate
\[
e^{\psi_0 \mathcal{Y}} \cdot \prod_{i=1}^N e^{-\psi_i^2 \mathcal{Y}}.
\]

Since \(e^{\psi_0 \mathcal{Y}}\) is approximable by the induction hypothesis $(\mathfrak{I}_n)$, it remains to show how to approximate the simulation of each \(e^{-\psi_i^2 \mathcal{Y}}\).

For each \(\psi_i\), by Proposition \ref{prop:saturating_space_Phi}, we have \(\psi_i \in \mathcal{V}_n^\Phi \subset W^{1,\infty}(\mathbb{R}^d)\). Thus, by \eqref{eqn:limit_order_2}, there exists \(\mathfrak{b} < T/3\) such that
\begin{equation}\label{eqn:psi_square_1}
	\left\| e^{-\mathfrak{b}^{-1/2} \psi_i \mathcal{Y}} e^{\mathfrak{b} \mathcal{X}} e^{\mathfrak{b}^{-1/2} \psi_i \mathcal{Y}} 
	\begin{pmatrix}
		m_0 \\[6pt]
		\tilde{m}_0
	\end{pmatrix}
	- e^{-\psi_i^2 \mathcal{Y}} 
	\begin{pmatrix}
		m_0 \\[6pt]
		\tilde{m}_0
	\end{pmatrix} \right\|_{H^1_\omega(\mathbb{R}^d) \times L^2_\omega(\mathbb{R}^d)} < \varepsilon/4.
\end{equation}

Here, the operator 
\[
e^{-\mathfrak{b}^{-1/2} \psi_i \mathcal{Y}} \, e^{\mathfrak{b} \mathcal{X}} \, e^{\mathfrak{b}^{-1/2} \psi_i \mathcal{Y}}
\]
can be realized by concatenating three segments of control: the first and third correspond to \(e^{\pm \mathfrak{b}^{-1/2} \psi_i \mathcal{Y}}\), which are approximable by the induction hypothesis \((\mathfrak{I}_n)\) via \eqref{eqn:control_for_e_phi_X}, while the middle segment \(e^{\mathfrak{b} \mathcal{X}}\) corresponds to free evolution.

We then have that for given \(\eta, \varepsilon > 0\), there exist \(\mathfrak{a}, \mathfrak{c} < T/3\) and piecewise constant controls \(\mathfrak{p}^{\mathfrak{a}, \eta} : [0, \mathfrak{a}] \to \mathbb{R}^{2d+1}\) and \(\mathfrak{p}^{\mathfrak{c}, \varepsilon} : [0, \mathfrak{c}] \to \mathbb{R}^{2d+1}\) such that

\begin{equation}\label{eqn:psi_square_2}
	\left\|
	\mathcal{S}\left(\mathfrak{a}, 
	\begin{pmatrix}
		m_0 \\[6pt]
		\tilde{m}_0
	\end{pmatrix}, 
	\mathfrak{p}^{\mathfrak{a}, \eta}\right) 
	- e^{\mathfrak{b}^{-1/2} \psi_i \mathcal{Y}} 
	\begin{pmatrix}
		m_0 \\[6pt]
		\tilde{m}_0
	\end{pmatrix}
	\right\|_{H^1_\omega(\mathbb{R}^d) \times L^2_\omega(\mathbb{R}^d)} < \eta,
\end{equation}
and
\begin{equation}\label{eqn:psi_square_3}
	\left\|
	\mathcal{S}\left(\mathfrak{c}, 
	e^{\mathfrak{b} \mathcal{X}} e^{\mathfrak{b}^{-1/2} \psi_i \mathcal{Y}} 
	\begin{pmatrix}
		m_0 \\[6pt]
		\tilde{m}_0
	\end{pmatrix}, 
	\mathfrak{p}^{\mathfrak{c}, \varepsilon}\right)
	- e^{-\mathfrak{b}^{-1/2} \psi_i \mathcal{Y}} e^{\mathfrak{b} \mathcal{X}} e^{\mathfrak{b}^{-1/2} \psi_i \mathcal{Y}} 
	\begin{pmatrix}
		m_0 \\[6pt]
		\tilde{m}_0
	\end{pmatrix}
	\right\|_{H^1_\omega(\mathbb{R}^d) \times L^2_\omega(\mathbb{R}^d)} < \frac{\varepsilon}{4}.
\end{equation}

Then, if we denote by \(\mathbf{0}|_{[0,\mathfrak{b}]} = (0, \dots, 0)\) as the zero mapping 
\[
[0, \mathfrak{b}] \to \mathbb{R}^{2d+1}
\]
which corresponds to the free evolution \(e^{\mathfrak{b} \mathcal{X}}\), then using inequalities \eqref{eqn:psi_square_1} and \eqref{eqn:psi_square_3}, we deduce
\begin{align*}
	&\left\|
	\mathcal{S}\bigl(\mathfrak{a} + \mathfrak{b} + \mathfrak{c}, 
	\begin{pmatrix}
		m_0 \\[6pt]
		\tilde{m}_0
	\end{pmatrix}, 
	\mathfrak{p}^{\mathfrak{c}, \varepsilon} \diamond \mathbf{0}|_{[0,\mathfrak{b}]} \diamond \mathfrak{p}^{\mathfrak{a}, \eta}\bigr)
	- e^{-\psi_i^2 \mathcal{Y}} 
	\begin{pmatrix}
		m_0 \\[6pt]
		\tilde{m}_0
	\end{pmatrix}
	\right\|_{H^1_\omega(\mathbb{R}^d) \times L^2_\omega(\mathbb{R}^d)} \\
	&< \frac{\varepsilon}{2} 
	+ \left\|
	\mathcal{S}\bigl(\mathfrak{a} + \mathfrak{b} + \mathfrak{c}, 
	\begin{pmatrix}
		m_0 \\[6pt]
		\tilde{m}_0
	\end{pmatrix}, 
	\mathfrak{p}^{\mathfrak{c}, \varepsilon} \diamond \mathbf{0}|_{[0,\mathfrak{b}]} \diamond \mathfrak{p}^{\mathfrak{a}, \eta}\bigr)
	- \mathcal{S}\bigl(\mathfrak{c}, 
	e^{\mathfrak{b} \mathcal{X}} e^{\mathfrak{b}^{-1/2} \psi_i \mathcal{Y}} 
	\begin{pmatrix}
		m_0 \\[6pt]
		\tilde{m}_0
	\end{pmatrix}, 
	\mathfrak{p}^{\mathfrak{c}, \varepsilon}\bigr)
	\right\|_{H^1_\omega(\mathbb{R}^d) \times L^2_\omega(\mathbb{R}^d)}.
\end{align*}

Using inequality \eqref{eqn:Lipschitz_continuity} and inequality \eqref{eqn:psi_square_2}, we deduce
\begin{align*}
	&\left\|
	\mathcal{S}\left(\mathfrak{a} + \mathfrak{b} + \mathfrak{c}, 
	\begin{pmatrix}
		m_0 \\
		\tilde{m}_0
	\end{pmatrix}, 
	\mathfrak{p}^{\mathfrak{c}, \varepsilon} \diamond \mathbf{0}|_{[0, \mathfrak{b}]} \diamond \mathfrak{p}^{\mathfrak{a}, \eta} \right) 
	- e^{-\psi_i^2 \mathcal{Y}} 
	\begin{pmatrix}
		m_0 \\
		\tilde{m}_0
	\end{pmatrix}
	\right\|_{H^1_\omega(\mathbb{R}^d) \times L^2_\omega(\mathbb{R}^d)} \\
	&\le \frac{\varepsilon}{2} + \mathcal{C}_1(\mathfrak{p}^{\mathfrak{c}, \varepsilon}, \mathfrak{c}) 
	\left\|
	\mathcal{S}\left(\mathfrak{a} + \mathfrak{b}, 
	\begin{pmatrix}
		m_0 \\
		\tilde{m}_0
	\end{pmatrix}, 
	\mathbf{0}|_{[0, \mathfrak{b}]} \diamond \mathfrak{p}^{\mathfrak{a}, \eta} \right) 
	- e^{\mathfrak{b} \mathcal{X}} e^{\mathfrak{b}^{-1/2} \psi_i \mathcal{Y}} 
	\begin{pmatrix}
		m_0 \\
		\tilde{m}_0
	\end{pmatrix}
	\right\|_{H^1_\omega(\mathbb{R}^d) \times L^2_\omega(\mathbb{R}^d)} \\
	&\le \frac{\varepsilon}{2} + \mathcal{C}_1(\mathfrak{p}^{\mathfrak{c}, \varepsilon}, \mathfrak{c}) 
	\mathcal{C}_2(\mathfrak{b}) 
	\left\|
	\mathcal{S}\left(\mathfrak{a}, 
	\begin{pmatrix}
		m_0 \\
		\tilde{m}_0
	\end{pmatrix}, 
	\mathfrak{p}^{\mathfrak{a}, \eta} \right) 
	- e^{\mathfrak{b}^{-1/2} \psi_i \mathcal{Y}} 
	\begin{pmatrix}
		m_0 \\
		\tilde{m}_0
	\end{pmatrix}
	\right\|_{H^1_\omega(\mathbb{R}^d) \times L^2_\omega(\mathbb{R}^d)} \\
	&\le \frac{\varepsilon}{2} + \mathcal{C}_1(\mathfrak{p}^{\mathfrak{c}, \varepsilon}, \mathfrak{c}) 
	\mathcal{C}_2(\mathfrak{b}) \, \eta.
\end{align*}

Since \(\mathcal{C}_1, \mathcal{C}_2\) are independent of \(\eta\), there exists \(\eta > 0\) sufficiently small such that \(\varepsilon/2 + \mathcal{C}_1 \mathcal{C}_2 \eta < \varepsilon\). Then we have constructed a piecewise constant control, given by 
\[
\mathfrak{p}^{\mathfrak{c}, \varepsilon} \diamond \mathbf{0}|_{[0,\mathfrak{b}]} \diamond \mathfrak{p}^{\mathfrak{a}, \eta},
\]
that approximates the simulation of \(e^{-\psi_i^2 \mathcal{Y}}\).

We can then iteratively show that the simulation of 
\[
\prod_{i=1}^N e^{-\psi_i^2 \mathcal{Y}}
\]
is also approximable in arbitrarily small time. Then, by the induction hypothesis \((\mathfrak{I}_n)\), we know that there exists a piecewise constant control that steers the state
\[
\prod_{i=1}^N e^{-\psi_i^2 \mathcal{Y}}
\begin{pmatrix}
	m_0\\[3pt]
	\tilde{m}_0
\end{pmatrix}
=
\begin{pmatrix}
	m_0\\[3pt]
	\tilde{m}_0 - \sum_{i=1}^N \psi_i^2 m_0
\end{pmatrix}
\]
arbitrarily close to
\[
e^{\psi_0 \mathcal{Y}} \prod_{i=1}^N e^{-\psi_i^2 \mathcal{Y}}
\begin{pmatrix}
	m_0\\[3pt]
	\tilde{m}_0
\end{pmatrix}
=
\begin{pmatrix}
	m_0\\[3pt]
	\tilde{m}_0 + \psi_0 m_0 - \sum_{i=1}^N \psi_i^2 m_0
\end{pmatrix}
=
\begin{pmatrix}
	m_0\\[3pt]
	\tilde{m}_0 + \psi m_0
\end{pmatrix},
\]
which implies \((\mathfrak{I}_{n+1})\).

Thus, by induction, we conclude that $(\mathfrak{I}_n)$ holds for all $n \in \mathbb{N}$.

\textbf{Step 3. Show that $m_0\psi$ with $\psi\in\mathcal{V}_\infty^\Phi$ can approximate $m_0\psi_\eta$ in $L^2_\omega(\mathbb{R}^d)$.}

Since we have $\tilde{m}_0, v \in L^p_{\text{loc}}$ for a fixed $p > 2$, it follows that
\[
\psi_\eta = \frac{v - \tilde{m}_0}{m_0} \chi_{\mathcal{B} \setminus N_\eta(m_0)} \in L^p_\omega(\mathbb{R}^d).
\]

Thus, by \eqref{saturatinglem_Phi:enm3} in Proposition \ref{prop:saturating_space_Phi}, there exists a sequence $\{\psi_n\}_{n=1}^\infty \in \mathcal{V}_\infty^\Phi$ such that $\psi_n \to \psi_\eta$ in $L^p_\omega(\mathbb{R}^d)$. Since $m_0$ is a finite linear combination of eigenfunctions, it is a finite-degree polynomial, and hence $m_0 \in L^q_\omega(\mathbb{R}^d)$ for any $1 \le q < \infty$. Choosing $q$ such that $\frac{1}{p} + \frac{1}{q} = \frac{1}{2}$, we obtain
\[
\left\| \psi_\eta m_0 - \psi_n m_0 \right\|_{L^2_\omega(\mathbb{R}^d)} \le \left\| m_0 \right\|_{L^q_\omega(\mathbb{R}^d)} \left\| \psi_\eta - \psi_n \right\|_{L^p_\omega(\mathbb{R}^d)} \to 0.
\]

Thus for a given $\varepsilon>0$, there will exist $\psi_N\in\mathcal{V}_\infty^\Phi$ such that
\begin{equation}\label{eqn:error_mphi}
	\left\| \psi_\eta m_0-\psi_N m_0 \right\|_{L^2_\omega(\mathbb{R}^d)}<\varepsilon/4.
\end{equation}

Since $\mathcal{V}_\infty^\Phi = \bigcup_{n=0}^\infty \mathcal{V}_n^\Phi$, the function $\psi_N$ lies in some $\mathcal{V}_n^\Phi$. By applying $(\mathfrak{I}_n)$, for the given $\varepsilon, T$, there exist $\tau \in [0, T)$ and a piecewise constant function $\mathfrak{p} : [0, \tau] \to \mathbb{R}^{2d+1}$ such that the solution $m$ of \eqref{eqn:ControlPDE_linear} under the control law specified by \eqref{eqn:formula_theta}, \eqref{eqn:set_theta} with the initial state $(m_0, \tilde{m}_0)$ satisfies
\begin{equation}\label{eqn:error_sol_mphi}
	\left\| \left(m(\cdot, \tau), \partial_t m(\cdot, \tau)\right) - (m_0, \tilde{m}_0 + \psi_N m_0) \right\|_{H^1_\omega(\mathbb{R}^d) \times L^2_\omega(\mathbb{R}^d)} < \varepsilon / 4.
\end{equation}

Proposition \ref{prop:control_velocity} then follows from \eqref{eqn:error_v_mphi}, \eqref{eqn:error_mphi}, and \eqref{eqn:error_sol_mphi}.
% \begin{align*}
	% \left\|\left(m(\cdot,\tau),\partial_t m(\cdot,\tau)\right)-(m_0,v)\right\|_{H^1_\omega(\mathbb{R}^d)\times L^2_\omega(\mathbb{R}^d)}&<\varepsilon/4+ \left\| v-(\tilde{m}_0+m_0\psi_N) \right\|_{L^2_\omega(\mathbb{R}^d)}\\
	% &\le\varepsilon/4+ \left\| (\tilde{m}_0+m_0\psi_\eta)-v \right\|_{L^2_\omega(\mathbb{R}^d)}+\left\| m_0\psi_\eta-m_0\psi_N \right\|_{L^2_\omega(\mathbb{R}^d)}\\
	% &<\varepsilon.
	% \end{align*}
\end{proof}

\subsection{Proof of  Theorem \ref{thm:control_for_nonlinear_wave}}
\begin{prop}\label{prop:control_for_linear_wave_short_time}
For any initial state \((m_0, \tilde{m}_0) \in H^1_\omega(\mathbb{R}^d) \times \bigl(L^2_\omega(\mathbb{R}^d) \cap L^p_{\mathrm{loc}}(\mathbb{R}^d)\bigr)\) for some \(p > 2\), satisfying either Assumption~\ref{asp:finitecombination} or Assumption~\ref{asp:finitecombination2}, and for any target state \((m_T, \tilde{m}_T) \in H^1_\omega(\mathbb{R}^d) \times L^2_\omega(\mathbb{R}^d)\), and any \(\varepsilon, T > 0\), there exist \(\tau \in [0, T)\) and a piecewise constant control \(\mathfrak{p} : [0, \tau] \to \mathbb{R}^{2d+1}\) such that the solution \(m\) of \eqref{eqn:ControlPDE_linear}, governed by the control law specified in \eqref{eqn:formula_theta} and \eqref{eqn:set_theta} with initial state \((m_0, \tilde{m}_0)\), satisfies
\[
\left\| \left( m(\cdot, \tau), \partial_t m(\cdot, \tau) \right) - (m_T, \tilde{m}_T) \right\|_{H^1_\omega(\mathbb{R}^d) \times L^2_\omega(\mathbb{R}^d)} < \varepsilon.
\]

\end{prop}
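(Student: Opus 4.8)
The plan is to produce the control by concatenating three elementary manoeuvres built from the two available tools: Proposition~\ref{prop:control_displacement} (steer the displacement onto a prescribed finite eigenfunction combination by a pure free evolution, tracking the resulting velocity via Remark~\ref{rmk:finite_combination}) and Proposition~\ref{prop:control_velocity} (adjust the velocity while essentially fixing the displacement, up to arbitrarily small error). Throughout I would use three facts: finite linear combinations of the $\psi_{\mathbf n}$ are dense in both $H^1_\omega(\mathbb R^d)$ and $L^2_\omega(\mathbb R^d)$, and being polynomials they lie in $L^p_{\mathrm{loc}}(\mathbb R^d)$ for every $p$; the free propagator $e^{t\mathcal X}$ is a $C_0$-group, hence uniformly bounded on $[0,T]$; and the solution map $\mathcal S(\cdot,\cdot,\mathfrak p)$ is Lipschitz in the initial datum with constant $C(\mathfrak p,\tau)$ by~\eqref{eqn:Lipschitz_continuity}. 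By uniqueness (Proposition~\ref{Mild}), running a concatenated control corresponds to composing the flows.

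Assume first that $(m_0,\tilde m_0)$ satisfies Assumption~\ref{asp:finitecombination}. I would fix a \emph{nonzero} finite eigenfunction combination $\hat m_T$ with $\|\hat m_T-m_T\|_{H^1_\omega}<\varepsilon/4$ and a finite eigenfunction combination $\hat v$ with $\|\hat v-\tilde m_T\|_{L^2_\omega}<\varepsilon/4$; this last step replaces the merely $L^2_\omega$ target velocity by an $L^p_{\mathrm{loc}}$ one, which the velocity control requires. Applying Proposition~\ref{prop:control_displacement} to $(m_0,\hat m_T)$ with horizon $T/3$ gives $\tau_1\in[0,T/3)$ and an initial velocity $v_0$, itself a finite eigenfunction combination, with $e^{\tau_1\mathcal X}(m_0,v_0)=(\hat m_T,\tilde v_1)$ for some finite eigenfunction combination $\tilde v_1$. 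Since $(\hat m_T,\tilde v_1)$ satisfies Assumption~\ref{asp:finitecombination} and $\tilde v_1\in L^2_\omega\cap L^p_{\mathrm{loc}}$, Proposition~\ref{prop:control_velocity} (target velocity $\hat v$, tolerance $\varepsilon/4$, horizon $T/3$) produces $\tau_3\in[0,T/3)$ and piecewise constant $\mathfrak p^{(3)}$ on $[0,\tau_3]$ with $\|\mathcal S(\tau_3,(\hat m_T,\tilde v_1),\mathfrak p^{(3)})-(\hat m_T,\hat v)\|<\varepsilon/4$. This freezes $C_3:=C(\mathfrak p^{(3)},\tau_3)$; I then set $C_1:=\sup_{0\le t\le T}\|e^{t\mathcal X}\|$ and $\delta:=\varepsilon/(4(1+C_1C_3))$, and apply Proposition~\ref{prop:control_velocity} once more, now to $(m_0,\tilde m_0)$ (here $\tilde m_0\in L^2_\omega\cap L^p_{\mathrm{loc}}$ by hypothesis) with target $v_0$, tolerance $\delta$, horizon $T/3$, obtaining $\tau_2\in[0,T/3)$ and $\mathfrak p^{(2)}$ on $[0,\tau_2]$ with $\|\mathcal S(\tau_2,(m_0,\tilde m_0),\mathfrak p^{(2)})-(m_0,v_0)\|<\delta$.

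With $\tau:=\tau_1+\tau_2+\tau_3<T$ and $\mathfrak p$ the control running $\mathfrak p^{(2)}$, then zero control for time $\tau_1$, then $\mathfrak p^{(3)}$, the state at time $\tau_2$ is within $\delta$ of $(m_0,v_0)$, hence at time $\tau_2+\tau_1$ within $C_1\delta$ of $e^{\tau_1\mathcal X}(m_0,v_0)=(\hat m_T,\tilde v_1)$, hence at time $\tau$ within $C_1C_3\delta$ of $\mathcal S(\tau_3,(\hat m_T,\tilde v_1),\mathfrak p^{(3)})$ by~\eqref{eqn:Lipschitz_continuity}; the triangle inequality then yields $\|(m(\cdot,\tau),\partial_t m(\cdot,\tau))-(m_T,\tilde m_T)\|<C_1C_3\delta+3\varepsilon/4\le\varepsilon$. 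For the case of Assumption~\ref{asp:finitecombination2} ($m_0=0$, $\tilde m_0\neq0$), I would first run zero control for a short $\tau_0\in[0,T/4)$: by~\eqref{eqn:linear_displacement_evolution}--\eqref{eqn:linear_velocity_evolution} the new state is a pair of finite eigenfunction combinations and, for $\tau_0$ small enough that $\sqrt{\rho_{\mathbf n}}\tau_0<\pi$ on the finite active mode set, the new displacement is nonzero; the previous case then applies on $[\tau_0,T)$, and prepending the zero control on $[0,\tau_0]$ finishes the argument.

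The main obstacle I anticipate is the bookkeeping of hypotheses and the order of choices rather than any new analytic difficulty: one must check at each stage that the displacement feeding into a velocity-control step is a \emph{nonzero} finite eigenfunction combination and that the velocity feeding into it lies in $L^p_{\mathrm{loc}}$ (automatic here, all intermediate data being polynomials and hence in every $H^k_\omega$ and $L^p_{\mathrm{loc}}$), and one must select the tolerances in the right order — freeze $\mathfrak p^{(3)}$, hence $C_3$, before choosing $\delta$ and $\mathfrak p^{(2)}$ — to avoid a circular dependence among the constants. The one genuinely structural point is the reduction of the $m_0=0$ case to the $m_0\neq0$ case by a preliminary free evolution, which hinges on the explicit spectral formula for the unforced wave flow.
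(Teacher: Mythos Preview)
Your proposal is correct and follows essentially the same approach as the paper: concatenate a velocity control (Proposition~\ref{prop:control_velocity}) to reach $(m_0,v_0)$, a free evolution (Proposition~\ref{prop:control_displacement}) to move the displacement to the target, and a second velocity control to fix the final velocity, with the Lipschitz estimate~\eqref{eqn:Lipschitz_continuity} handling error propagation and the $m_0=0$ case reduced by a preliminary free evolution. The only cosmetic difference is that you approximate the general target $(m_T,\tilde m_T)$ by finite eigenfunction combinations at the outset, whereas the paper first proves the result for such targets and invokes density at the end; the order of choosing constants (fix $\mathfrak p^{(3)}$ and $C_3$ before $\delta$ and $\mathfrak p^{(2)}$) and the verification of the $L^p_{\mathrm{loc}}$ hypothesis at each stage match the paper's argument.
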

\begin{proof}

As a first step, we consider Assumption~\ref{asp:finitecombination} and assume that both \(m_T\) and \(\tilde{m}_T\) are nonzero finite linear combinations of eigenfunctions \(\psi_{\mathbf{n}}\). Our strategy proceeds as follows:

By Proposition~\ref{prop:control_displacement}, we can find an arbitrarily small time \(\mathfrak{b} > 0\), such that there exists an initial velocity \(v_0 \in L^2_\omega(\mathbb{R}^d)\) such that the pair \((m_0, v_0)\), under the free evolution \(e^{t\mathcal{X}}\), evolves to \((m_T, \partial_t m(\mathfrak{b}))\), which we denote by \((m_T, v_1)\). Then, by Proposition~\ref{prop:control_velocity}, there exist two controls—each defined over arbitrarily small time intervals of lengths \(\mathfrak{a}\) and \(\mathfrak{c}\)—such that the first steers the system from \((m_0, \tilde{m}_0)\) arbitrarily close to \((m_0, v_0)\), and the second steers the system from \((m_T, v_1)\) arbitrarily close to \((m_T, \tilde{m}_T)\). The controllability result then follows by concatenating these constructions and choosing \(\mathfrak{a} + \mathfrak{b} + \mathfrak{c} < T\).

By Remark~\ref{rmk:finite_combination}, we know that \(v_0, v_1 \in L^2_\omega(\mathbb{R}^d)\) are both finite linear combinations of eigenfunctions \(\psi_{\mathbf{n}}\), which are finite-degree polynomials. The same holds for \(\tilde{m}_T\). This implies that \(v_0, v_1, \tilde{m}_T \in L^p_{\text{loc}}(\mathbb{R}^d)\). Therefore, we can first apply Proposition~\ref{prop:control_velocity}: for any time interval of length \(T/3\), and for any \(\eta, \varepsilon > 0\), there exist \(\mathfrak{a}, \mathfrak{c} < T/3\) and piecewise constant controls \(\mathfrak{p}^{\mathfrak{a}, \eta} : [0, \mathfrak{a}] \to \mathbb{R}^{2d+1}\) and \(\mathfrak{p}^{\mathfrak{c}, \varepsilon} : [0, \mathfrak{c}] \to \mathbb{R}^{2d+1}\) such that
\begin{equation}\label{eqn:control_error_alpha}
	\left\|
	\mathcal{S}\left(\mathfrak{a}, \begin{pmatrix}
		m_0 \\
		\tilde{m}_0
	\end{pmatrix}, \mathfrak{p}^{\mathfrak{a}, \eta}\right) -
	\begin{pmatrix}
		m_0 \\
		v_0
	\end{pmatrix}
	\right\|_{H^1_\omega(\mathbb{R}^d) \times L^2_\omega(\mathbb{R}^d)} < \eta,
\end{equation}
\begin{equation}\label{eqn:control_error_gamma}
	\left\|
	\mathcal{S}\left(\mathfrak{c}, \begin{pmatrix}
		m_T \\
		v_1
	\end{pmatrix}, \mathfrak{p}^{\mathfrak{c}, \varepsilon}\right) -
	\begin{pmatrix}
		m_T \\
		\tilde{m}_T
	\end{pmatrix}
	\right\|_{H^1_\omega(\mathbb{R}^d) \times L^2_\omega(\mathbb{R}^d)} < \varepsilon/2.
\end{equation}

Then using inequality \eqref{eqn:control_error_gamma}, we deduce that
\begin{align*}
	&\left\|
	\mathcal{S}\bigl(\mathfrak{a}+\mathfrak{b}+\mathfrak{c}, \begin{pmatrix}
		m_0 \\
		\tilde{m}_0
	\end{pmatrix}, \mathfrak{p}^{\mathfrak{c}, \varepsilon} \diamond \mathbf{0}|_{[0,\mathfrak{b}]} \diamond \mathfrak{p}^{\mathfrak{a}, \eta}\bigr) - \begin{pmatrix}
		m_T \\
		\tilde{m}_T
	\end{pmatrix}
	\right\|_{H^1_\omega(\mathbb{R}^d) \times L^2_\omega(\mathbb{R}^d)} \\
	<& \frac{\varepsilon}{2} + \left\|
	\mathcal{S}\bigl(\mathfrak{a}+\mathfrak{b}+\mathfrak{c}, \begin{pmatrix}
		m_0 \\
		\tilde{m}_0
	\end{pmatrix}, \mathfrak{p}^{\mathfrak{c}, \varepsilon} \diamond \mathbf{0}|_{[0,\mathfrak{b}]} \diamond \mathfrak{p}^{\mathfrak{a}, \eta}\bigr) - \mathcal{S}\bigl(\mathfrak{c}, \begin{pmatrix}
		m_T \\
		v_1
	\end{pmatrix}, \mathfrak{p}^{\mathfrak{c}, \varepsilon}\bigr)
	\right\|_{H^1_\omega(\mathbb{R}^d) \times L^2_\omega(\mathbb{R}^d)}.
\end{align*}

Using inequality \eqref{eqn:Lipschitz_continuity}, and inequality \eqref{eqn:control_error_alpha}, we deduce
\begin{align*}
	& \left\|
	\mathcal{S}\bigl(\mathfrak{a}+\mathfrak{b}+\mathfrak{c}, \begin{pmatrix}
		m_0 \\
		\tilde{m}_0
	\end{pmatrix}, \mathfrak{p}^{\mathfrak{c}, \varepsilon} \diamond \mathbf{0}|_{[0,\mathfrak{b}]} \diamond \mathfrak{p}^{\mathfrak{a}, \eta}\bigr) - \begin{pmatrix}
		m_T \\
		\tilde{m}_T
	\end{pmatrix}
	\right\|_{H^1_\omega(\mathbb{R}^d) \times L^2_\omega(\mathbb{R}^d)} \\
	\le\; & \frac{\varepsilon}{2} + \mathcal C_1(\mathfrak{p}^{\mathfrak{c}, \varepsilon}, \mathfrak{c}) \left\|
	\mathcal{S}\bigl(\mathfrak{a}+\mathfrak{b}, \begin{pmatrix}
		m_0 \\
		\tilde{m}_0
	\end{pmatrix}, \mathbf{0}|_{[0,\mathfrak{b}]} \diamond \mathfrak{p}^{\mathfrak{a}, \eta}\bigr) - \begin{pmatrix}
		m_T \\
		v_1
	\end{pmatrix}
	\right\|_{H^1_\omega(\mathbb{R}^d) \times L^2_\omega(\mathbb{R}^d)} \\
	= \; & \frac{\varepsilon}{2} + \mathcal C_1(\mathfrak{p}^{\mathfrak{c}, \varepsilon}, \mathfrak{c}) \left\|
	e^{\mathfrak{b} \mathcal{X}} \mathcal{S}(\mathfrak{a}, \begin{pmatrix}
		m_0 \\
		\tilde{m}_0
	\end{pmatrix}, \mathfrak{p}^{\mathfrak{a}, \eta}) - e^{\mathfrak{b} \mathcal{X}} \begin{pmatrix}
		m_0 \\
		v_0
	\end{pmatrix}
	\right\|_{H^1_\omega(\mathbb{R}^d) \times L^2_\omega(\mathbb{R}^d)} \\
	\le\; & \frac{\varepsilon}{2} + \mathcal C_1(\mathfrak{p}^{\mathfrak{c}, \varepsilon}, \mathfrak{c}) \mathcal C_2(\mathfrak{b}) \eta.
\end{align*}

Since $\mathcal{C}_1, \mathcal{C}_2$ are independent of $\eta$, there exists $\eta$ small enough such that 
\[
\frac{\varepsilon}{2} + \mathcal{C}_1 \mathcal{C}_2 \eta < \varepsilon,
\]
which shows that we can steer the state $(m_0, \tilde{m}_0)$ arbitrarily close to $(m_T, \tilde{m}_T)$ in a time 
\[
\tau := \mathfrak{a} + \mathfrak{b} + \mathfrak{c} < T,
\]
under the assumptions that $m_0 \neq 0$ and that $m_T, \tilde{m}_T$ are both nonzero finite linear combinations of eigenfunctions $\psi_{\mathbf{n}}$.

Since the set of states $(m_T, \tilde{m}_T)$ with $m_T, \tilde{m}_T$ being nonzero finite linear combinations of $\psi_{\mathbf{n}}$ is dense in 
$H^1_\omega(\mathbb{R}^d) \times L^2_\omega(\mathbb{R}^d)$, this implies approximate controllability for general target states. Indeed, we can approximate any target $(m_T, \tilde{m}_T)$ by nonzero finite linear combinations of eigenfunctions, construct a control for the approximate target, and ensure the final state remains within a prescribed $\varepsilon$ of the true target. This proves the Proposition under Assumption~\ref{asp:finitecombination}.

For Assumption~\ref{asp:finitecombination2}, we first take a sufficiently small time $\mathfrak{a}_0 > 0$ and apply the free evolution. By formula~\eqref{eqn:linear_displacement_evolution}, we know that $m(\mathfrak{a}_0) \neq 0$, and $m(\mathfrak{a}_0)$ is a finite linear combination of eigenfunctions. Hence, the problem reduces to the setting of Assumption~\ref{asp:finitecombination}, and the result follows.
\end{proof}

The difference between Proposition \ref{prop:control_for_linear_wave} and Proposition \ref{prop:control_for_linear_wave_short_time} is that, for any given time \(T > 0\), the latter only guarantees controllability on some short subinterval within \([0, T)\). To prove Proposition \ref{prop:control_for_linear_wave}, our strategy is as follows:

Observe that \((m(t), \partial_t m(t)) = (1, 0)\) is a stationary solution of \eqref{eqn:ControlPDE_linear} under zero control:
\begin{equation}\label{eqn:stationary_01}
e^{t \mathcal{X}} \begin{pmatrix} 1 \\ 0 \end{pmatrix} = \begin{pmatrix} 1 \\ 0 \end{pmatrix} \quad \text{for all } t \in \mathbb{R}.
\end{equation}

We can first steer the initial state to \((1, 0)\) in a short time, then let the system remain at \((1, 0)\) for an arbitrary duration, and finally steer \((1, 0)\) to the final state in a short time. By choosing the duration of the stationary phase appropriately, we are able to construct a control supported on the entire time interval \([0, T]\).

\begin{proof}[Proof of Proposition \ref{prop:control_for_linear_wave}]

It follows from Proposition \ref{prop:control_for_linear_wave_short_time} that, for any \(\eta, \varepsilon > 0\), there exist \(\mathfrak{a}, \mathfrak{c} < T/2\) and piecewise constant controls 
\[
\mathfrak{p}^{\mathfrak{a}, \eta} : [0, \mathfrak{a}] \to \mathbb{R}^{2d+1}, \quad \mathfrak{p}^{\mathfrak{c}, \varepsilon} : [0, \mathfrak{c}] \to \mathbb{R}^{2d+1}
\]
such that
\begin{equation}\label{eqn:control_error_alpha_01}
	\left\|
	\mathcal{S}\left(\mathfrak{a}, \begin{pmatrix} m_0 \\ \tilde{m}_0 \end{pmatrix}, \mathfrak{p}^{\mathfrak{a}, \eta}\right)
	- \begin{pmatrix} 1 \\ 0 \end{pmatrix}
	\right\|_{H^1_\omega(\mathbb{R}^d) \times L^2_\omega(\mathbb{R}^d)} < \eta,
\end{equation}
\begin{equation}\label{eqn:control_error_gamma_01}
	\left\|
	\mathcal{S}\left(\mathfrak{c}, \begin{pmatrix} 1 \\ 0 \end{pmatrix}, \mathfrak{p}^{\mathfrak{c}, \varepsilon}\right)
	- \begin{pmatrix} m_T \\ \tilde{m}_T \end{pmatrix}
	\right\|_{H^1_\omega(\mathbb{R}^d) \times L^2_\omega(\mathbb{R}^d)} < \frac{\varepsilon}{2}.
\end{equation}

Set \(\mathfrak{b} := T - \mathfrak{a} - \mathfrak{c}\). Using inequality \eqref{eqn:control_error_gamma_01} and the Lipschitz continuity inequality \eqref{eqn:Lipschitz_continuity}, we deduce
\begin{align*}
	&\left\|
	\mathcal{S}\bigl(T, \begin{pmatrix} m_0 \\ \tilde{m}_0 \end{pmatrix}, \mathfrak{p}^{\mathfrak{c}, \varepsilon} \diamond \mathbf{0}|_{[0,\mathfrak{b}]} \diamond \mathfrak{p}^{\mathfrak{a}, \eta}\bigr) 
	- \begin{pmatrix} m_T \\ \tilde{m}_T \end{pmatrix}
	\right\|_{H^1_\omega(\mathbb{R}^d) \times L^2_\omega(\mathbb{R}^d)} \\
	<& \frac{\varepsilon}{2} + \left\|
	\mathcal{S}\bigl(\mathfrak{a} + \mathfrak{b} + \mathfrak{c}, \begin{pmatrix} m_0 \\ \tilde{m}_0 \end{pmatrix}, \mathfrak{p}^{\mathfrak{c}, \varepsilon} \diamond \mathbf{0}|_{[0,\mathfrak{b}]} \diamond \mathfrak{p}^{\mathfrak{a}, \eta}\bigr) 
	- \mathcal{S}\bigl(\mathfrak{c}, \begin{pmatrix} 1 \\ 0 \end{pmatrix}, \mathfrak{p}^{\mathfrak{c}, \varepsilon}\bigr)
	\right\|_{H^1_\omega(\mathbb{R}^d) \times L^2_\omega(\mathbb{R}^d)} \\
	\le& \frac{\varepsilon}{2} + \mathcal{C}_1(\mathfrak{p}^{\mathfrak{c}, \varepsilon}, \mathfrak{c}) \left\|
	\mathcal{S}\bigl(\mathfrak{a} + \mathfrak{b}, \begin{pmatrix} m_0 \\ \tilde{m}_0 \end{pmatrix}, \mathbf{0}|_{[0,\mathfrak{b}]} \diamond \mathfrak{p}^{\mathfrak{a}, \eta}\bigr) - \begin{pmatrix} 1 \\ 0 \end{pmatrix}
	\right\|_{H^1_\omega(\mathbb{R}^d) \times L^2_\omega(\mathbb{R}^d)}.
\end{align*}

By \eqref{eqn:stationary_01}, we have that
\[
\left\|
\mathcal{S}\bigl(\mathfrak{a} + \mathfrak{b}, \begin{pmatrix}
	m_0 \\
	\tilde{m}_0
\end{pmatrix}, \mathbf{0}|_{[0,\mathfrak{b}]} \diamond \mathfrak{p}^{\mathfrak{a}, \eta}\bigr) 
- \begin{pmatrix}
	1 \\
	0
\end{pmatrix}
\right\|_{H^1_\omega(\mathbb{R}^d) \times L^2_\omega(\mathbb{R}^d)}
= \left\|
e^{\mathfrak{b}\mathcal{X}} \mathcal{S}\bigl(\mathfrak{a}, \begin{pmatrix}
	m_0 \\
	\tilde{m}_0
\end{pmatrix}, \mathfrak{p}^{\mathfrak{a}, \eta}\bigr) 
- e^{\mathfrak{b}\mathcal{X}} \begin{pmatrix}
	1 \\
	0
\end{pmatrix}
\right\|_{H^1_\omega(\mathbb{R}^d) \times L^2_\omega(\mathbb{R}^d)}.
\]

Thus it follows from \eqref{eqn:control_error_alpha_01} that
\begin{align*}
	&\left\|
	\mathcal{S}\bigl(T,\begin{pmatrix}
		m_0 \\
		\tilde{m}_0
	\end{pmatrix}, \mathfrak{p}^{\mathfrak{c}, \varepsilon} \diamond \mathbf{0}|_{[0,\mathfrak{b}]} \diamond \mathfrak{p}^{\mathfrak{a}, \eta}\bigr)
	- \begin{pmatrix}
		m_T \\
		\tilde{m}_T
	\end{pmatrix}
	\right\|_{H^1_\omega(\mathbb{R}^d) \times L^2_\omega(\mathbb{R}^d)} \\
	\le\,& \frac{\varepsilon}{2} + \mathcal{C}_1(\mathfrak{p}^{\mathfrak{c}, \varepsilon}, \mathfrak{c}) 
	\left\| e^{\mathfrak{b}\mathcal{X}} \mathcal{S}(\mathfrak{a}, \begin{pmatrix}
		m_0 \\
		\tilde{m}_0
	\end{pmatrix}, \mathfrak{p}^{\mathfrak{a}, \eta}) 
	- e^{\mathfrak{b}\mathcal{X}} \begin{pmatrix}
		1 \\
		0
	\end{pmatrix} \right\|_{H^1_\omega(\mathbb{R}^d) \times L^2_\omega(\mathbb{R}^d)} \\
	\le\,& \frac{\varepsilon}{2} + \mathcal{C}_1(\mathfrak{p}^{\mathfrak{c}, \varepsilon}, \mathfrak{c}) \mathcal{C}_2(\mathfrak{b})
	\left\| \mathcal{S}(\mathfrak{a}, \begin{pmatrix}
		m_0 \\
		\tilde{m}_0
	\end{pmatrix}, \mathfrak{p}^{\mathfrak{a}, \eta}) - \begin{pmatrix}
		1 \\
		0
	\end{pmatrix} \right\|_{H^1_\omega(\mathbb{R}^d) \times L^2_\omega(\mathbb{R}^d)} \\
	\le\,& \frac{\varepsilon}{2} + \mathcal{C}_1(\mathfrak{p}^{\mathfrak{c}, \varepsilon}, \mathfrak{c}) \mathcal{C}_2(\mathfrak{b}) \eta.
\end{align*}

Since $\mathcal{C}_1$ and $\mathcal{C}_2$ are independent of $\eta$, there exists $\eta$ sufficiently small such that 
\[
\frac{\varepsilon}{2} + \mathcal{C}_1 \mathcal{C}_2 \eta < \varepsilon.
\]

Thus, we have constructed a control that steers the system arbitrarily close to the desired state on $[0,T]$.
\end{proof}

\begin{proof}[Proof of Theorem \ref{thm:control_for_nonlinear_wave}]
For the given initial state $(m_0,\tilde{m}_0)$ and final state $(m_T,\tilde{m}_T)$, for any $\varepsilon,T>0$, by Proposition \ref{prop:control_for_linear_wave}, there exists $\mathfrak{p}^*:[0,T]\rightarrow \mathbb{R}^{2d+1}$ piecewise constant such that the solution $m^*$ of the following PDE
\begin{equation}
	\begin{cases}
		\displaystyle
		\frac{\partial^2}{\partial t^2}m^*(x,t)-\Delta m^*(x,t)+2x\cdot\nabla m^*(x,t)+\theta^*(x,t)m^*(x,t)=0,\\
		m^*(x,0)=m_0(x),\\
		\partial_t m^*(x,0)=\tilde{m}_0(x),\\
	\end{cases}
\end{equation}
with
\begin{equation}
	\theta^*(x,t)=\sum_{j=0}^{2d}\mathfrak{p}_j^*
	(t)\theta_j(x),
\end{equation}
satisfies
$$\left\|  \left( m^*(\cdot,T),\partial_t m^*(\cdot, T) \right)-(m_T,\tilde{m}_T) \right\|_{H^1_\omega(\mathbb{R}^d)\times L^2_\omega (\mathbb{R}^d)}<\varepsilon .$$

Since $\sigma$ is a Lipschitz function and $\sigma(0)=0$, thus $$\bar{\sigma}(x):=\begin{cases}
	\displaystyle \frac{\sigma(x)}{x}, & \text{if } x \ne 0, \\
	0, & \text{if } x = 0.
\end{cases}$$
is a.e.~well-defined, and $\bar{\sigma}\in L^\infty(\mathbb{R})$. Hence $\bar{\sigma}(m^*)=\bar{\sigma}\circ m^*\in L^\infty(\mathbb{R}^d\times[0,T])$ and we have that
$$\bar{\sigma}(m^*) m^*=\sigma(m^*).$$

Then    \begin{equation}
	\frac{\partial^2}{\partial t^2}m^*(x,t)-(\Delta -2x\cdot\nabla) m^*(x,t)+ \left( \theta^*(x,t)+\bar{\sigma}(m^*) \right)m^*(x,t)=\sigma(m^*),
\end{equation}

If we set 
$$\theta(x,t)=\theta^*(x,t)+\bar{\sigma}(m^*),$$
then $\theta\in L^\infty(\mathbb{R}^d\times [0,T])$ and $m(x,t):=m^*(x,t)$ would be the solution of the following equation
\begin{equation}
	\frac{\partial^2}{\partial t^2}m(x,t)-\Delta m(x,t)+2x\cdot\nabla m(x,t)+\theta(x,t)m(x,t)=\sigma(m),
\end{equation}
with initial state $(m(\cdot, 0),\tilde{m}(\cdot, 0))=(m_0,\tilde{m}_0)$, and the uniqueness is guaranteed by Proposition \ref{Mild}. Furthermore, we have that
$$\left\|  \left( m(\cdot,T),\partial_t m(\cdot, T) \right)-(m_T,\tilde{m}_T) \right\|_{H^1_\omega(\mathbb{R}^d)\times L^2_\omega (\mathbb{R}^d)}<\varepsilon,$$
which shows the approximate controllability of \eqref{eqn:ControlPDE_nonlinear}.
\end{proof}

\section{Conclusion}
Neural networks have played a crucial role in deep learning. In this work, we propose to view neural networks as discrete \emph{Neural Partial Differential Equations (Neural PDEs)}. Under this perspective, the supervised learning task reduces to optimizing the coefficients of the associated parabolic and hyperbolic PDEs. To the best of our knowledge, this approach has not yet been explored within the control and optimization theory literature, thus opening a novel research direction.

We emphasize that, depending on the choice of initial condition \( m_0 \) and target state \( m_T \), the minimum of the optimization problem \eqref{Para2}
may not be zero. In this work, we prove the existence of optimal parameters for the parabolic equation case. However, the uniqueness of minimizers cannot be guaranteed, as multiple minimizers may exist.

Furthermore, we establish for the first time a \emph{dual system} associated with the optimization problem
\eqref{Para1}-\eqref{Para2}. Designing numerical schemes for solving this system poses significant challenges, as existing methods for parabolic equations typically assume known coefficients, whereas in our problem the coefficients themselves are the optimization variables.

For the hyperbolic case, we prove an \emph{approximate controllability result} as expressed in
\eqref{Hyper1}-\eqref{Hyper2}.

In upcoming work, we will address the question of \emph{exact controllability}—that is, whether the minimum of \eqref{Para2} can be zero—and provide explicit solutions for the system
\eqref{Para1}-\eqref{Para2}
under exact controllability assumptions. We will also develop and analyze numerical schemes suitable for these problems, as well as investigate the exact control problem
\eqref{Hyper1}-\eqref{Hyper3}.
\bibliographystyle{amsplain}
\bibliography{ref}{}

\end{document}